\theoremstyle{thmstyleone}%
\newtheorem{theorem}{Theorem}
\newtheorem{prop}[theorem]{Proposition}%
\theoremstyle{thmstyletwo}%
\newtheorem{remark}{Remark}%
\theoremstyle{thmstylethree}%
\newtheorem{definition}{Definition}%
\DeclareMathOperator*{\esssup}{ess\,sup}
\newtheorem{lemma}{Lemma}
\begin{document}

\title[Coupling local and nonlocal total variation flow for image despeckling]{Coupling local and nonlocal total variation flow for image despeckling}

\author[1]{\fnm{Yi} \sur{Ran}}
\author[1]{\fnm{Zhichang} \sur{Guo}}
\author[2]{\fnm{Kehan} \sur{Shi}}
\author*[1]{\fnm{Qirui} \sur{Zhou}}\email{21b912020@stu.hit.edu.cn}
\author[3]{\fnm{Jingfeng} \sur{Shao}}
\author[4,5]{\fnm{Martin} \sur{Burger}}
\author[1]{\fnm{Boying} \sur{Wu}}

\affil*[1]{School of Mathematics, Harbin Institute of Technology, 150001 Harbin, China}
\affil[2]{Department of Mathematics, China Jiliang University, 310018 Hangzhou, China}
\affil[3]{School of Mathematics and Information Science, Guangxi University, 530004 Guangxi, China}
\affil[4]{Helmholtz Imaging, Deutsches Elektronen-Synchrotron DESY, 22607 Hamburg, Germany}
\affil[5]{Fachbereich Mathematik, Universit{\"a}t Hamburg, 20146 Hamburg, Germany}





\abstract{
    Nonlocal equations effectively preserve textures but exhibit weak regularization effects in image denoising, whereas local equations offer strong denoising capabilities yet fail to protect  textures.	
    To integrate the advantages of both approaches, this paper investigates a coupled local-nonlocal total variation flow for image despeckling.		
    We establish the existence and uniqueness of the weak solution for the proposed equation.	
    Several properties, including the equivalent forms of the weak solution and its asymptotic behavior, are derived.
    Furthermore, we demonstrate that the weak solutions of the proposed equation converge to the weak solution of the classical total variation flow under kernel rescaling.	
    The importance of coupling is highlighted through comparisons with 
    local and nonlocal models for image despeckling. 
}

\keywords{Total variation flow, Nonlocal PDEs, Image despeckling}

\maketitle

\section{Introduction}

Synthetic aperture radar (SAR) is a coherent imaging system that is widely utilized in landscape classification, disaster monitoring, and surface change detection \cite{perera2023sar}. 
However, SAR images are frequently affected by signal-dependent and spatially correlated noise, known as speckle, which significantly degrades image quality.
The degeneration can be mathematically represented \cite{goodman1975statistical} as 
$$
f=\tilde{u} \eta,
$$
where $\tilde{u}$ denotes the clean image, $f$ is the observed image and $\eta$ represents the multiplicative Gamma noise following a Gamma distribution with mean $1$.
The probability density function related to the multiplicative Gamma noise is given by
$$
p(\eta)=\frac{L^{L}}{\Gamma(L)} \eta^{L-1} e^{-L \eta} \mathbf{1}_{\{\eta \geqslant 0\}},
$$
where $\mathbf{1}_{\{\eta \geqslant 0\}}$ is the indicator function of $\{\eta \geqslant 0 \}$ and $L$ represents the number of looks.

To remove multiplicative Gamma noise, the well-known variational model (AA model) \cite{aubert2008variational} 
\begin{equation*}
    \inf_{u\in BV(\Omega), u>0}E(u)=\int_{\Omega}|D u|+\lambda \int_{\Omega}\left(\log u+\frac{f}{u}\right) d x,  
\end{equation*}
was introduced. 
It integrates the local total variation (TV) regularizer from the ROF model \cite{rudin1992nonlinear} with maximum a posteriori (MAP) estimation.
Its corresponding evolution equation 
\begin{equation*}
    \begin{cases}u_t=\operatorname{div}\left(\frac{D u}{|D u|}\right)+\lambda \frac{f-u}{u^2},   \quad(x, t) \in Q_T, \\ \frac{\partial u}{\partial \vec{n}}=0,  \quad(x, t) \in \partial \Omega \times(0, T), \\ u(x,0)=f(x), \quad x \in \Omega,\end{cases}
\end{equation*}
constitutes a combination of TV flow \cite{andreu2000minimizing} and a fidelity term.
Since multiplicative noise is signal-dependent, regions that only different in  grayscale values suffer different levels of noise contaminant.
To enhance the sensitivity to grayscale variations, a convex model \cite{dong2013convex} incorporating the adaptive total variation  \cite{chen2003minimization} and a novel fidelity term was proposed
\begin{equation*}
    \inf_{u\in BV(\Omega), u>0}E(u)= \int_{\Omega} g|D u|+\lambda \int_{\Omega}\left(u+f \log \frac{1}{u}\right) d x. 
\end{equation*}
The weight $g \in C(\overline{\Omega})$  with 
    $g>0 $ in the adaptive total variation is spatially dependent to control  the regularization strength across different regions.
The above TV-based methods exhibit strong denoising capabilities in homogeneous regions, but their texture restoration performance is not satisfactory due to the lack of nonlocal image information during modeling. 

Nonlocal operators have been considered to preserve texture information \cite{gilboa2009nonlocal, wen2023nonlocal}. 
Specifically, the nonlocal total variation (NLTV) regularizer is introduced in \cite{gilboa2009nonlocal}.
Based on the NLTV regularizer, numerious variational models were proposed \cite{dong2012nonlocal, karami2018nonlocal, chen2019convex}, which adapted different fidelities.

The NLTV equation
\begin{equation}\label{eq:nl_tv_general}
    \frac{\partial u}{\partial t}=\int_{\Omega} J(x-y)|u( y,t)-u(x,t)|^{-1}(u( y,t)-u(x,t)) d y,
\end{equation}
has been studied in \cite{gilboa2009nonlocal, kindermann2005deblurring}. 
When the weight function $J(x-y)$ quantifies self-similarity and redundancy within images \cite{buades2005review}, the NLTV equation can be effectively applied to image denoising, demonstrating strong performance in preserving texture and fine details.
However, the NLTV equation exhibits a weaker regularizing effect compared to local TV equation \cite{chasseigne2006asymptotic}, resulting in incomplete denoising in homogeneous regions.

In this paper, we aim to leverage the satisfactory denoising effect of local TV in homogeneous regions and nonlocal TV in textures.
Coupling local and nonlocal operators is an effective approach\cite{delon2019rnlp, shi2015salt}.
We propose the following model for image despeckling
\begin{equation}\label{eq:tv_case}
    \left\{\begin{array}{l}
    \frac{\partial u}{\partial t}=\operatorname{div}\left((1-g)\frac{D u}{|D u|} \right)+\lambda \int_{\Omega} \frac{g(x)+g(y)}{2} J(x-y)\frac{u(y, t)-u(x, t)}{|u(y, t)-u(x, t)|}d y, \quad(x, t) \in Q_T, \\
    \frac{\partial u}{\partial\vec{n}}=0, \quad(x, t) \in \partial \Omega \times(0, T), \\
    u(x, 0)=f(x), \quad x \in \Omega,
\end{array}\right.
\end{equation}
where  $\lambda>0$ represents a trade-off parameter.  
Notice that the evolution problem \eqref{eq:tv_case} is the gradient flow associated to the functional 
$$
E(u)= \int_{\Omega}(1-g)|D u| +\frac{\lambda}{2 } \int_{\Omega} \int_{\Omega} \frac{g(x)+g(y)}{2} J(x-y)|u(y)-u(x)| d y d x.
$$
When $g \equiv 1$ ($g \equiv 0$), $E(u)$ degenerates to the NLTV model \cite{gilboa2009nonlocal} (TV model \cite{andreu2000minimizing}).
In our model, 
$g \in C(\overline{\Omega})$  is spatially related to control the diffusion speed to improve the multiplicative denoising effect, which can be constructed similarly to \cite{zhou2014doubly, shan2019multiplicative}. 
More general, in areas full of textures and repeated
structures, if  we set $g(x)=1$, our model converts to the nonlocal diffusion equation
$$
\frac{\partial u}{\partial t}= {\lambda} \int_{\Omega} \frac{1+g(y)}{2} J(x-y)|u(y)-u(x)| d y.
$$
In homogeneous areas, if we set $g(x)=0$, our model leads to 
$$
\frac{\partial u}{\partial t}=\operatorname{div}\left(\frac{D u}{|D u|} \right)+\lambda \int_{\Omega} \frac{g(y)}{2} J(x-y)\frac{u(y, t)-u(x, t)}{|u(y, t)-u(x, t)|}d y. 
$$
We add TV regularization to nonlocal diffusion to improve denoising efficiency, as nonlocal diffusion has a weak regularizing effect \cite{chasseigne2006asymptotic}.
Since the evolution in \eqref{eq:tv_case} starts from the noisy image $f$, the reaction term can be removed after designing an appropriate $g$ \cite{zhou2014doubly}. 
The solution $u(x, t)$ represents the restored image with the scale variable $t$. 

Our proposed model represents a coupling of adaptive local TV and adaptive NLTV, which corresponds to the limiting case of $p \rightarrow 1^+$ in the following $p$-laplacian ($1<p<\infty$) equation 
\begin{equation}\label{eq:p-lap}
    \left\{\begin{array}{l}
    \frac{\partial u}{\partial t}=\operatorname{div}\left((1-g )|\nabla u|^{p-2} \nabla u\right)+\lambda \int_{\Omega} \frac{g(x)+g(y)}{2} \times \\
    \quad J(x-y)|u(y, t)-u(x, t)|^{p-2}(u(y, t)-u(x, t)) d y, \quad(x, t) \in Q_T, \\
    \frac{\partial u}{\partial\vec{n}}=0, \quad(x, t) \in \partial \Omega \times(0, T), \\
    u(x, 0)=f(x), \quad x \in \Omega.
    \end{array}\right.
    \end{equation}
The $p$-laplacian equation \eqref{eq:p-lap} has been analyzed in \cite{shi2021coupling} for the removal of additive Gaussian noise.

In comparison to the $p$-laplacian model \eqref{eq:p-lap}, the solution of our model belongs to the function space of bounded variation ($BV(\Omega)$), which allows  discontinuities. 
Furthermore, theoretical analysis becomes more challenging compared to \eqref{eq:p-lap} due to the lack of compactness and separability of  $BV(\Omega)$. 
Finally, an appropriately designed function  $g(x)= g(f(x))$, serving as a grayscale indicator as suggested in \cite{dong2013convex}, is employed to guide the diffusion speed of \eqref{eq:tv_case}, thereby enhancing the efficiency of multiplicative noise removal.

The structure of this paper is organized as follows. Section \ref{sec:preli} presents essential preliminaries, along with existence and uniqueness results for our model, as well as several  propositions. 
In Section \ref{sec:pr}, we provide proofs of our results. 
Finally, Section \ref{sec:experi} presents the numerical scheme of our model and comparative experiments against other multiplicative denoising models.

\section{ Preliminaries }\label{sec:preli}

\subsection{Basic assumptions}
Let $\Omega \subset \mathbb{R}^N(N \geqslant 2)$ be a bounded domain with a smooth boundary $\partial \Omega$, $ Q_T=\Omega \times(0, T)$ and $\vec{n}$ denotes the unit outward normal  vector on $\partial \Omega$.  
The kernel function $J: \mathbb{R}^N \rightarrow \mathbb{R}$ in \eqref{eq:tv_case} is a nonnegative, continuous, radially symmetric function with compact support, satisfying $J(0)>0$ and $\int_{\mathbb{R}^N} J(z) dz=1$. 
The weight $g$ in the adaptive total variation satisfies $g \in C(\overline{\Omega})$ and $0 <g_0 \leqslant g(x) \leqslant g_1 <1$. 
Throughout this work, the noisy image $f\in BV(\Omega)\cap L^2(\Omega)$.  

\subsection{The solution space}
The function space of bounded variation $BV(\Omega)$ is a class of functions $u \in L^1(\Omega)$ whose generalized  partial derivatives are measures with finite total variation in $\Omega$. 
Thus 
$u \in BV(\Omega)$ if and only if  $u \in L^1(\Omega)$ and there are Radon measures $\mu_1, \cdots, \mu_N$ defined in $\Omega$ with finite total mass in $\Omega$ and
$$
\int_{\Omega} u D_i \varphi d x=-\int_{\Omega} \varphi d \mu_i, \quad i=1, \cdots, N,
$$
for all $\varphi \in C_0^{\infty}(\Omega)$. Thus the gradient of $u$ is a vector valued measure with finite total variation
\begin{equation}
    |D u|=\sup \left\{\int_{\Omega} u \operatorname{div} \varphi d x: \varphi \in C_0^1\left(\Omega ; \mathbb{R}^N\right),|\varphi(x)| \leqslant 1 \text { for } x \in \Omega\right\} .
    \end{equation}
The space $B V(\Omega)$ is endowed with the norm $\|u\|_{B V(\Omega)}=\|u\|_{L^1(\Omega)}+|D u|$.

Several results from \cite{iindex} are needed in the proof. Let 
$$
X_{p}(\Omega):=\left\{\mathbf{z} \in L^{\infty}\left(\Omega; \mathbb{R}^{N}\right): \operatorname{div}(\mathbf{z}) \in L^{p}(\Omega)\right\}, \quad 1\leqslant p \leqslant N. 
$$
The distribution $(\mathbf{z}, Du) \in \mathcal{D}^\prime(\Omega)$ can be defined as  
\begin{equation*}
    \langle(\mathbf{z}, D u), \varphi\rangle=-\int_{\Omega}  \mathbf{z} \cdot \nabla \varphi ud x -\int_{\Omega}  \operatorname{div}(\mathbf{z})  \varphi u d x,
\end{equation*}
when $\mathbf{z} \in X_p(\Omega)$ and $u \in B V(\Omega) \cap L^{p^{\prime}}(\Omega)$. 
Then $(\mathbf{z}, Du)$ is a Radon measure in $\Omega$ with 
\begin{equation}\label{eq:measure_equality}
\left|\int_B(\mathbf{z}, D u)\right| \leqslant \int_B|(\mathbf{z}, D u)| \leqslant \|\mathbf{z}\|_{\infty} \int_B|D u|,
\end{equation}
for any Borel set $B \subseteq \Omega$. Especially
$$
\int_{\Omega}(\mathbf{z}, D u)=\int_{\Omega} \mathbf{z} \cdot \nabla u d x, \quad \forall u \in W^{1,1}(\Omega) \cap L^{\infty}(\Omega).
$$
Moreover, the following Green's formula can be established:
\begin{lemma}[Green's formula \cite{andreu2011local}]\label{gren}
    Assume $\Omega \subset \mathbb{R}^N$ is a bounded domain with Lipschitz boundary $\partial \Omega$.
    If $\mathbf{z} \in X_p(\Omega)$ and $u \in B V(\Omega) \cap L^{p^{\prime}}(\Omega)$,
    then we have
    \begin{equation*}
        \int_{\Omega}(\mathbf{z}, D u) + \int_{\Omega } u \operatorname{div}(\mathbf{z}) d x=\int_{\partial \Omega}[\mathbf{z}, \nu] u d \mathcal{H}^{N-1},
    \end{equation*}
where $[\mathbf{z}, \nu]$ is the weak trace on $\partial \Omega$ of the normal component of $\mathbf{z} \in X_p(\Omega)$. 
\end{lemma}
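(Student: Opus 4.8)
The identity is Anzellotti's Gauss--Green formula for the pairing $(\mathbf{z}, Du)$, and the plan is to reduce it to the classical divergence theorem through a density argument, after first giving a meaning to the weak normal trace $[\mathbf{z},\nu]$. Since the pairing $(\mathbf{z}, Du)$ is already known to be a Radon measure satisfying the domination bound \eqref{eq:measure_equality}, and since $\int_\Omega (\mathbf{z}, Du) = \int_\Omega \mathbf{z}\cdot\nabla u\, dx$ whenever $u \in W^{1,1}(\Omega)\cap L^\infty(\Omega)$, the two genuinely new ingredients I would have to supply are the construction of $[\mathbf{z},\nu]$ and the passage to the limit from smooth $u$ to $u \in BV(\Omega)\cap L^{p'}(\Omega)$.

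\emph{Construction of the normal trace.} For fixed $\mathbf{z}\in X_p(\Omega)$ I would consider the linear functional $G(w)=\int_\Omega w\,\operatorname{div}(\mathbf{z})\,dx+\int_\Omega \mathbf{z}\cdot\nabla w\,dx$ defined on $w\in C^1(\overline{\Omega})$; this is well defined because $\operatorname{div}(\mathbf{z})\in L^p(\Omega)$ pairs with $w\in L^{p'}(\Omega)$ and $\mathbf{z}\in L^\infty$ pairs with $\nabla w\in C(\overline{\Omega})$. The first key point is that $G(w)=0$ whenever $w\in C_c^\infty(\Omega)$, by the very definition of the distributional divergence, and by density this forces $G(w)$ to depend only on the boundary trace $w|_{\partial\Omega}$. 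The second key point is the estimate $|G(w)|\le \|\mathbf{z}\|_\infty\int_{\partial\Omega}|w|\,d\mathcal{H}^{N-1}$, which I would obtain from the surjectivity of the trace operator: replacing $w$ by an extension of its boundary trace supported in a thin tubular neighborhood of $\partial\Omega$, the gradient of the extension can be taken with $L^1$ mass arbitrarily close to $\int_{\partial\Omega}|w|\,d\mathcal{H}^{N-1}$ while the $\operatorname{div}(\mathbf{z})$ contribution becomes negligible as the neighborhood shrinks. Riesz representation on $L^1(\partial\Omega)$ then yields $[\mathbf{z},\nu]\in L^\infty(\partial\Omega)$ with $\|[\mathbf{z},\nu]\|_\infty\le\|\mathbf{z}\|_\infty$ and $G(w)=\int_{\partial\Omega}[\mathbf{z},\nu]\,w\,d\mathcal{H}^{N-1}$, which is exactly the claimed formula for smooth $w$.

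\emph{Approximation.} Given $u\in BV(\Omega)\cap L^{p'}(\Omega)$, I would approximate it by $u_n\in C^\infty(\overline{\Omega})$ converging to $u$ in $L^{p'}(\Omega)$, strictly in $BV$ (so that $|Du_n|(\Omega)\to|Du|(\Omega)$), and with traces converging in $L^1(\partial\Omega)$; the existence of such a sequence on a Lipschitz domain is classical, with a truncation step to secure the $L^{p'}$ control. For each $n$ the formula holds by the smooth identity above, giving $\int_\Omega(\mathbf{z},Du_n)+\int_\Omega u_n\operatorname{div}(\mathbf{z})\,dx=\int_{\partial\Omega}[\mathbf{z},\nu]u_n\,d\mathcal{H}^{N-1}$. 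The divergence term converges by $L^{p'}$--$L^p$ duality and the boundary term converges by trace convergence, so the whole statement reduces to showing $\int_\Omega(\mathbf{z},Du_n)\to\int_\Omega(\mathbf{z},Du)$.

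\emph{Main obstacle.} This last convergence is where essentially all the difficulty sits, because $\nabla u_n\,dx$ converges to $Du$ only weakly-$*$ and the pairing is not continuous under weak-$*$ convergence alone. The remedy is to exploit strict convergence: by \eqref{eq:measure_equality} the measures $(\mathbf{z},Du_n)$ are uniformly dominated by $|Du_n|$, whose total masses converge to $|Du|(\Omega)$, and one shows $(\mathbf{z},Du_n)\rightharpoonup(\mathbf{z},Du)$ weakly-$*$ as measures on $\overline{\Omega}$ while ruling out loss of mass at the boundary. Testing this convergence against the constant $\mathbf{1}$ then produces the desired identity. I expect verifying this tightness and no-mass-loss step — equivalently, the lower semicontinuity sandwich that pins $\int_\Omega(\mathbf{z},Du)$ between the liminf and the limsup of $\int_\Omega\mathbf{z}\cdot\nabla u_n\,dx$ — to be the crux of the argument, the remainder being routine duality and density estimates.
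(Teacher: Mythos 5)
The paper offers no proof of this lemma: it is imported verbatim from \cite{andreu2011local}, where it goes back to Anzellotti's theory of pairings between bounded vector fields with divergence in $L^p$ and $BV$ functions. Your proposal reconstructs that standard argument faithfully — the trace functional $G$, its vanishing on $C_c^\infty(\Omega)$, the Gagliardo-type extension supported in a thin collar with gradient mass arbitrarily close to the boundary $L^1$ norm, Riesz representation on $L^1(\partial\Omega)$ to produce $[\mathbf{z},\nu]\in L^\infty(\partial\Omega)$, strict $BV$ approximation, and continuity of the pairing $\int_\Omega(\mathbf{z},Du_n)\to\int_\Omega(\mathbf{z},Du)$ under strict convergence — and you correctly identify this last step as the crux; it is handled exactly as you describe, via the domination $|(\mathbf{z},Du_n)|\leqslant\|\mathbf{z}\|_\infty|Du_n|$, distributional convergence of the pairings on $\Omega$, and the absence of mass concentration at $\partial\Omega$ guaranteed by $|Du_n|(\Omega)\to|Du|(\Omega)$. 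Two small points deserve care but do not constitute gaps: (i) the claim that $G(w)$ depends only on $w|_{\partial\Omega}$ should be extracted either from the boundary estimate itself or from approximating zero-trace $C^1(\overline{\Omega})$ functions by $C_c^\infty(\Omega)$ functions in $W^{1,1}$ with a uniform $L^\infty$ bound, since $\operatorname{div}(\mathbf{z})$ is only in $L^p$; (ii) when $p=1$, so $p'=\infty$, one cannot ask for $u_n\to u$ in $L^{p'}$ — what the argument actually uses is $L^1$ convergence together with a uniform bound, which suffices for $\int_\Omega u_n\operatorname{div}(\mathbf{z})\,dx\to\int_\Omega u\operatorname{div}(\mathbf{z})\,dx$ by dominated convergence.
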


To define the differential operator $\operatorname{div}\left((1-g)\frac{Du}{|Du|}\right)$, 
the weighted total variation \cite{chen2003minimization} is used in this paper.

\begin{definition}[\cite{chen2003minimization}]
Suppose $g  \in C(\overline{\Omega})$ with $g>0$ on $\overline{\Omega}$. 
A function $u \in L^1(\Omega)$ has bounded $g$-total variation in $\Omega$, if
$$
\int_{\Omega} g  |D u|=\sup \left\{\int_{\Omega} u \operatorname{div} \varphi d x: \varphi \in C_0^1\left(\Omega ; \mathbb{R}^N\right),|\varphi(x)| \leqslant g(x) \text { for } x \in \Omega\right\} <\infty.
$$
\end{definition}

\begin{remark}[\cite{chen2003minimization}]
$u \in L^1(\Omega)$ having bounded $g$-total variation in $\Omega$ implies that $u \in BV(\Omega)$.
\end{remark}

\begin{lemma}[lower semicontinuity \cite{chen2003minimization}]\label{weak_continuous_u}
Assume that $\left\{u_k\right\}_{k \in \mathbb{N}} \subset B V(\Omega)$ with $u_k \rightarrow u$ in $L^1(\Omega)$. Then, $u \in  BV (\Omega)$, and
$$
\int_{\Omega} g \left|D u\right| \leqslant \liminf _{k \rightarrow \infty} \int_{\Omega} g \left|D u_k\right|,
$$
for any $g  \in C(\overline{\Omega})$ with $g>0$ on $\overline{\Omega}$.
\end{lemma}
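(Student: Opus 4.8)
The plan is to exploit the dual definition of the weighted total variation, which writes $\int_\Omega g\,|Du|$ as a supremum of linear functionals of $u$ that are each continuous with respect to $L^1(\Omega)$ convergence; a pointwise supremum of continuous functionals is automatically lower semicontinuous. The crucial structural feature is that the admissibility constraint $|\varphi(x)|\leqslant g(x)$ is imposed pointwise on the test field $\varphi$ and does not depend on the competitor function, so the same family of admissible test fields serves every $u_k$ and the limit $u$ simultaneously.

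First I would fix an arbitrary admissible test field $\varphi\in C_0^1(\Omega;\mathbb{R}^N)$ with $|\varphi(x)|\leqslant g(x)$ for all $x\in\Omega$. By the definition of the $g$-total variation, for every $k$ one has
\begin{equation*}
\int_\Omega u_k\,\operatorname{div}\varphi\,dx \leqslant \int_\Omega g\,|Du_k|.
\end{equation*}
Since $\operatorname{div}\varphi$ is continuous with compact support, it belongs to $L^\infty(\Omega)$, so the convergence $u_k\to u$ in $L^1(\Omega)$ yields
\begin{equation*}
\int_\Omega u\,\operatorname{div}\varphi\,dx = \lim_{k\to\infty}\int_\Omega u_k\,\operatorname{div}\varphi\,dx \leqslant \liminf_{k\to\infty}\int_\Omega g\,|Du_k|.
\end{equation*}
Taking the supremum over all admissible $\varphi$ on the left-hand side produces exactly $\int_\Omega g\,|Du|$, which gives the claimed inequality.

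It remains to verify $u\in BV(\Omega)$. If $\liminf_{k\to\infty}\int_\Omega g\,|Du_k|=\infty$ the inequality is vacuous, so I would assume this liminf is finite and pass to a subsequence $\{u_{k_j}\}$ realizing it. Because $g\in C(\overline{\Omega})$ attains a positive minimum $g_0>0$ on the compact set $\overline{\Omega}$, one has $\int_\Omega|Du_{k_j}|\leqslant g_0^{-1}\int_\Omega g\,|Du_{k_j}|$, so $\{u_{k_j}\}$ is bounded in $BV(\Omega)$; the standard $L^1$-lower semicontinuity of the unweighted total variation then forces $u\in BV(\Omega)$. Alternatively, one may invoke the Remark preceding the statement, which records that finiteness of the $g$-total variation already implies membership in $BV(\Omega)$.

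I do not anticipate a genuine obstacle here: the argument is the classical proof of lower semicontinuity as a supremum of $L^1$-continuous functionals. The only point requiring mild care is the membership $u\in BV(\Omega)$, which must be handled without assuming an a priori $BV$ bound on the sequence and hinges on the uniform positive lower bound $g\geqslant g_0$ supplied by continuity of $g$ on the compact closure $\overline{\Omega}$.
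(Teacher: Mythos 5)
The paper does not prove this lemma; it is imported verbatim from the cited reference \cite{chen2003minimization}, so there is no in-paper argument to compare against. Your proof is the standard duality argument and is correct: the weighted total variation is a pointwise supremum of functionals $u\mapsto\int_\Omega u\,\operatorname{div}\varphi\,dx$ that are continuous under $L^1$ convergence (since $\operatorname{div}\varphi\in L^\infty$), hence lower semicontinuous, and your handling of the membership $u\in BV(\Omega)$ via the uniform lower bound $g\geqslant g_0>0$ is the right way to close that step. The only caveat worth recording is that, as literally stated, the conclusion $u\in BV(\Omega)$ can only hold when $\liminf_k\int_\Omega g\,|Du_k|<\infty$ (otherwise smooth approximations of a non-$BV$ function give a counterexample); your proof correctly treats exactly this case, which is the version actually used in the paper.
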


Throughout this paper the following multivalued functions will be used:
$$
\operatorname{sgn}(r)= \begin{cases}1 & \text { if } r>0 \\ {[-1,1]} & \text { if } r=0 \\ -1 & \text { if } r<0\end{cases}, \text{ and }
\operatorname{sgn}_0(r)= \begin{cases}1 & \text { if } r>0 \\ 0 & \text { if } r=0 \\ -1 & \text { if } r<0\end{cases}.
$$

\subsection{The results of coupling local and nonlocal $p$-laplacian equation}
The results in \cite{shi2021coupling} illustrate the existence and uniqueness of the solution to problem \eqref{eq:p-lap} for $1<p<+\infty$.

\begin{definition}
A function $u \in C\left([0, T] ; L^2(\Omega)\right) \cap L^p\left((0, T) ; W^{1, p}(\Omega) \right)$ is called a weak solution of \eqref{eq:p-lap} if
\begin{equation}\label{weak_P}
    \begin{aligned}
\int_0^{t_1}\int_{\Omega} \frac{\partial u}{\partial t} \varphi d x dt+&\int_0^{t_1} \int_{\Omega}(1-g)|\nabla u|^{p-2} \nabla u \cdot \nabla \varphi d x d t 
+\frac{\lambda}{2} \int_0^{t_1} \int_{\Omega} \int_{\Omega} g(x) J(x-y) \times
\\
& 
\quad |u(y,t)-u(x,t)|^{p-2} (u(y,t)-u(x,t))(\varphi(y,t)-\varphi(x,t)) d y d x d t=0
    \end{aligned}
\end{equation}
holds for any $t_1\in [0,T]$ and any $\varphi \in C^1\left(\overline{Q_T}\right)$.
\end{definition}

\begin{lemma}[\cite{shi2021coupling}]\label{p-estimate}
Assume that $g \in C(\overline{\Omega})$  with 
    $0 <g_0 \leqslant g(x) \leqslant g_1 <1$. If $f \in L^2(\Omega)$, then \eqref{eq:p-lap} admits a unique solution. Besides, 
\begin{equation}
    \begin{aligned}
    \sup _{0< t< T} \int_{\Omega} u^2 d x &+2 \int_0^T \int_{\Omega}|\nabla u|^p d x d t \\
    &+\lambda \int_0^T \int_{\Omega} \int_{\Omega}  J(x-y)|u(y,t)-u(x,t)|^p d y d x d t \leqslant C \int_{\Omega} |f|^2 d x,  
    \end{aligned}
\end{equation}
where $C$ is independent of $p$.
\end{lemma}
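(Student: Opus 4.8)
The plan is to treat \eqref{eq:p-lap}, for fixed $p\in(1,\infty)$, as the $L^2(\Omega)$-gradient flow of the convex functional
$$
E_p(u)=\frac{1}{p}\int_\Omega(1-g)|\nabla u|^p\,dx+\frac{\lambda}{2p}\int_\Omega\int_\Omega\frac{g(x)+g(y)}{2}J(x-y)|u(y)-u(x)|^p\,dy\,dx,
$$
extended by $+\infty$ outside $W^{1,p}(\Omega)$. Since $1-g\geqslant 1-g_1>0$, $g\geqslant g_0>0$ and $\int_{\mathbb{R}^N}J=1$, the functional $E_p$ is proper, coercive on $W^{1,p}(\Omega)$ and lower semicontinuous on $L^2(\Omega)$. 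A direct computation, using the antisymmetry of the nonlocal integrand together with the radial symmetry of $J$ and the symmetry of $\tfrac{g(x)+g(y)}{2}$, shows that $\partial E_p$ acts as the two operators on the right-hand side of \eqref{eq:p-lap}, with the Neumann condition encoded weakly in the local term; the same symmetrization explains why the weight $g(x)$ in \eqref{weak_P} may be replaced by the symmetric $\tfrac{g(x)+g(y)}{2}$.

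For existence and uniqueness I would either invoke the Brezis--Komura theory for gradient flows of convex, lower semicontinuous functionals, or build a Galerkin scheme on finite-dimensional subspaces of $W^{1,p}(\Omega)$, solve the resulting monotone ODE systems on $[0,T]$, and pass to the limit. Uniqueness is then immediate from monotonicity: subtracting the weak formulations \eqref{weak_P} for two solutions $u_1,u_2$ and testing with $u_1-u_2$, both the local term (monotonicity of $\xi\mapsto|\xi|^{p-2}\xi$) and the nonlocal term (the same monotonicity applied to the pointwise differences) are nonnegative, so $\tfrac{d}{dt}\|u_1-u_2\|_{L^2(\Omega)}^2\leqslant 0$ with vanishing initial datum forces $u_1=u_2$.

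The a priori estimate is obtained by testing \eqref{weak_P} with $\varphi=u$. The time term yields $\tfrac12\tfrac{d}{dt}\|u\|_{L^2(\Omega)}^2$, the local term yields $\int_\Omega(1-g)|\nabla u|^p\,dx$, and the nonlocal term yields $\tfrac{\lambda}{2}\int_\Omega\int_\Omega g(x)J(x-y)|u(y)-u(x)|^p\,dy\,dx$; all three are nonnegative. Integrating in time over $(0,t_1)$ and bounding the weights below by $1-g\geqslant 1-g_1$ and $g\geqslant g_0$ gives the claimed inequality. The decisive point, and the reason $C$ is independent of $p$, is that contracting the fluxes $|\nabla u|^{p-2}\nabla u$ and $|u(y)-u(x)|^{p-2}(u(y)-u(x))$ against $\nabla u$ and $u(y)-u(x)$ produces the full $p$-th powers with coefficient exactly one: no factor $1/p$ or $p$ survives, so the constant depends only on $g_0,g_1,\lambda$. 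This uniformity is precisely what is needed later to send $p\to1^+$ and recover \eqref{eq:tv_case}.

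I expect the main obstacle to be passing to the limit in the two nonlinear terms of the Galerkin approximations, where weak convergence of $\nabla u_m$ does not commute with the nonlinearities. I would handle the local $p$-Laplacian by a Minty--Browder monotonicity argument, and obtain the strong $L^p(Q_T)$ convergence of $u_m$ needed for the nonlocal term and for identifying the limit of $\partial_t u_m$ from an Aubin--Lions compactness argument, using the uniform bound on $\nabla u_m$ in $L^p(Q_T)$ together with a bound on $\partial_t u_m$ in the appropriate dual space.
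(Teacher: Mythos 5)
The paper does not actually prove Lemma~\ref{p-estimate}: it is imported verbatim from \cite{shi2021coupling}, so there is no in-paper argument to compare yours against. Judged on its own, your proposal is a correct and standard route to the cited result. The identification of \eqref{eq:p-lap} as the $L^2$-gradient flow of the convex functional $E_p$ is right (the G\^ateaux derivative of the nonlocal part, after symmetrizing the weight to $\tfrac{g(x)+g(y)}{2}$, reproduces exactly the nonlocal term of \eqref{weak_P}), and either Brezis--Komura or a Galerkin/Minty--Browder scheme yields existence, with uniqueness following from the monotonicity of $\xi\mapsto|\xi|^{p-2}\xi$ applied to both the local and the pairwise-difference terms. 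Your derivation of the a priori bound by testing with $\varphi=u$ is the decisive part and is correct: the energy identity gives
\begin{equation*}
\tfrac12\|u(t_1)\|_{L^2(\Omega)}^2+\int_0^{t_1}\!\!\int_\Omega(1-g)|\nabla u|^p\,dx\,dt+\tfrac{\lambda}{2}\int_0^{t_1}\!\!\int_\Omega\!\int_\Omega g(x)J(x-y)|u(y,t)-u(x,t)|^p\,dy\,dx\,dt=\tfrac12\|f\|_{L^2(\Omega)}^2,
\end{equation*}
and bounding the weights below by $1-g_1$ and $g_0$ yields the stated inequality with $C$ depending only on $g_0,g_1$ --- you correctly pinpoint that the absence of any surviving $1/p$ factor is what makes $C$ uniform in $p$, which is precisely the property the paper exploits in \eqref{eq:estimate_1} when sending $p\to1^+$. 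Two minor imprecisions, neither fatal: $E_p$ is not coercive on $W^{1,p}(\Omega)$ (constants lie in its kernel), but coercivity is not needed for the subdifferential flow; and for $f\in L^2(\Omega)$ only, $\partial_t u$ is controlled merely through the regularizing effect of the semigroup ($\sqrt{t}\,\partial_t u\in L^2(Q_T)$), so the pairing $\int_0^{t_1}\int_\Omega \partial_t u\,\varphi$ in \eqref{weak_P} has to be interpreted accordingly; the $L^2(Q_T)$ bound on $\partial_t u$ used elsewhere in the paper is the content of Lemma~\ref{p-estimate_2} and requires $f\in W^{1,p}(\Omega)$.
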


\begin{lemma}\label{p-estimate_2}
Assume that $g \in C(\overline{\Omega})$  with 
    $0 <g_0 \leqslant g(x) \leqslant g_1 <1$. If $f  \in W^{1,p}(\Omega)$, $1<p<+\infty$, u is the solution of \eqref{eq:p-lap}. Then, we have the estimate
\begin{equation}\label{eq:p-estimate_2}
    \begin{aligned}
\int_0^{T}\int_{\Omega} \left(\frac{\partial u}{\partial t} \right)^2  d x dt 
&+\sup _{0< t< T}\int_{\Omega}|\nabla u|^{p} d x\\  
&+\frac{\lambda}{2} \int_{\Omega} \int_{\Omega} J(x-y)|u(y,t)-u(x,t)|^{p}  d y d x \leqslant C\|f\|_{W^{1,p}(\Omega)}.
    \end{aligned}
\end{equation}
\end{lemma}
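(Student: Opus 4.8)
\emph{Proof proposal.} The plan is to read \eqref{eq:p-lap} as the $L^2$-gradient flow of the convex functional
$$
\Phi_p(u)=\frac1p\int_\Omega(1-g)|\nabla u|^{p}\,dx+\frac{\lambda}{2p}\int_\Omega\int_\Omega\frac{g(x)+g(y)}{2}J(x-y)|u(y)-u(x)|^{p}\,dy\,dx,
$$
so that, formally, $u_t=-\Phi_p'(u)$: the first variation of the local term returns $\operatorname{div}\bigl((1-g)|\nabla u|^{p-2}\nabla u\bigr)$ after an integration by parts that uses the Neumann condition, while the first variation of the nonlocal term, symmetrised in $x$ and $y$ with the help of the radial symmetry of $J$, returns exactly the nonlocal reaction term of \eqref{eq:p-lap}. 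The desired estimate is then the energy dissipation identity for this flow, and the whole argument reduces to testing the equation against $u_t$.

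Carrying this out, I would insert $\varphi=u_t$ into the strong form of \eqref{eq:p-lap} and integrate over $Q_{t_1}:=\Omega\times(0,t_1)$. Since $|\nabla u|^{p-2}\nabla u\cdot\nabla u_t=\frac1p\,\partial_t|\nabla u|^p$ pointwise, the local term integrates to $\frac1p\int_\Omega(1-g)|\nabla u|^p\,dx$ evaluated between $0$ and $t_1$; likewise $|u(y)-u(x)|^{p-2}(u(y)-u(x))(u_t(y)-u_t(x))=\frac1p\,\partial_t|u(y)-u(x)|^p$ after symmetrisation, so the nonlocal term integrates to the nonlocal part of $\Phi_p$ between $0$ and $t_1$. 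This yields
$$
\int_0^{t_1}\int_\Omega u_t^2\,dx\,dt+\Phi_p\bigl(u(t_1)\bigr)=\Phi_p\bigl(u(0)\bigr)=\Phi_p(f).
$$
Every term on the left is nonnegative and $t\mapsto\Phi_p(u(t))$ is therefore nonincreasing, so I obtain at once the bound on $\int_0^T\int_\Omega u_t^2$ and, taking the supremum in $t_1$, the bound on $\sup_{0<t<T}\Phi_p(u(t))$. Recalling $0<g_0\le g\le g_1<1$, the weights $1-g$ and $\tfrac12(g(x)+g(y))$ are bounded above and below by positive constants, so $\Phi_p(u(t))$ controls both $\sup_{t}\int_\Omega|\nabla u|^p\,dx$ and $\sup_t\frac{\lambda}{2}\int_\Omega\int_\Omega J|u(y)-u(x)|^p\,dy\,dx$, which is exactly the left-hand side of \eqref{eq:p-estimate_2}.

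It then remains to bound the right-hand side $\Phi_p(f)$ by $\|f\|_{W^{1,p}(\Omega)}$. The local part is immediate from $1-g\le1$. For the nonlocal part I would use $f(y)-f(x)=\int_0^1\nabla f(x+s(y-x))\cdot(y-x)\,ds$ together with Jensen's inequality to get $|f(y)-f(x)|^p\le|y-x|^p\int_0^1|\nabla f(x+s(y-x))|^p\,ds$; since $J$ is supported in some ball $\{|z|\le R\}$, the factor $|y-x|^p$ is bounded by $R^p$, and a change of variables combined with $\int_{\mathbb{R}^N}J=1$ (after extending $f$ to an element of $W^{1,p}(\mathbb{R}^N)$ controlled by $\|f\|_{W^{1,p}(\Omega)}$) gives $\int_\Omega\int_\Omega J(x-y)|f(y)-f(x)|^p\,dy\,dx\le C\int_{\mathbb{R}^N}|\nabla f|^p$. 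Hence $\Phi_p(f)\le C\|f\|_{W^{1,p}(\Omega)}^p$, completing the estimate.

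The delicate point, and the step I expect to be the main obstacle, is that testing with $\varphi=u_t$ is not a priori legitimate: the weak solution produced by Lemma \ref{p-estimate} only lies in $C([0,T];L^2(\Omega))\cap L^p((0,T);W^{1,p}(\Omega))$, and the very integrability $u_t\in L^2(Q_T)$ is part of what we are trying to prove. I would remove this circularity by an approximation argument: regularise the datum by smooth $f_k\to f$ in $W^{1,p}(\Omega)$, for which parabolic regularity makes $\varphi=u_t$ admissible and the identity above rigorous, derive the estimate uniformly in $k$ (the bound depends only on $\|f_k\|_{W^{1,p}(\Omega)}$), and pass to the limit using weak compactness for $u_t$ and the lower semicontinuity of the energy from Lemma \ref{weak_continuous_u}. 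Equivalently, one may appeal to the regularising effect in the theory of gradient flows of convex lower-semicontinuous functionals, which guarantees $u_t\in L^2(Q_T)$ together with the energy identity precisely when the initial datum lies in the effective domain $\{\Phi_p<\infty\}$, here ensured by $f\in W^{1,p}(\Omega)$.
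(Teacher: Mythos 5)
Your proposal is correct and follows essentially the same route as the paper: the paper's proof also takes $\varphi=\partial u/\partial t$ in the weak formulation \eqref{weak_P}, uses the pointwise identity $|\nabla u|^{p-2}\nabla u\cdot\nabla u_t=\tfrac1p\partial_t|\nabla u|^p$ (and its nonlocal analogue) to obtain the energy dissipation identity, and concludes via the non-degeneracy $0<g_0\leqslant g\leqslant g_1<1$. Your two additional points --- justifying the admissibility of $\varphi=u_t$ by approximation and bounding $\int_\Omega\int_\Omega J(x-y)|f(y)-f(x)|^p\,dy\,dx$ by the $W^{1,p}$ norm via the fundamental theorem of calculus and Jensen --- are details the paper leaves implicit, and they are handled correctly.
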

\begin{proof}
    Taking $\varphi=\frac{\partial u}{\partial t}$ in \eqref{weak_P}, we obtain 
    \begin{equation*}
        \begin{aligned}
    &\int_0^{t_1}\int_{\Omega} \left(\frac{\partial u}{\partial t} \right)^2 d xdt+\int_0^{t_1} \frac{\partial }{\partial t} \left(\int_{\Omega}\frac{1}{p} (1-g)|\nabla u|^{p} -\frac{1}{p} (1-g)|\nabla f|^{p} d x \right) d t \\
    & 
    \quad +\frac{\lambda}{2} \int_0^{t_1} \frac{\partial }{\partial t} \left(\frac{1}{p} \int_{\Omega} \int_{\Omega} g(x) J(x-y)|u(y,t)-u(x,t)|^{p}  d y d x \right)d t=0,  
        \end{aligned}
    \end{equation*}
    since 
    $$
|\nabla u|^{p-2} \nabla u \cdot \nabla u_t =\frac{1}{2} \frac{\partial}{\partial t} \int_0^{|\nabla u(x, t)|^2}s^{(p-2) / 2} ds = \frac{\partial}{\partial t} \left(\frac{1}{p} |\nabla u|^{p} -\frac{1}{p} |\nabla f|^{p}\right).
$$ 
Then, we have 
$$
\begin{aligned}
   \int_0^{t_1}\int_{\Omega} \left(\frac{\partial u}{\partial t} \right)^2 d xdt &+\sup _{0< t< T} \frac{1}{p} \int_{\Omega}(1-g)|\nabla u|^p d x  \\
    &   
    +\frac{\lambda}{2} \frac{1}{p}  \int_{\Omega} \int_{\Omega} g(x) J(x-y)|u(y,t_1)-u(x,t_1)|^{p}  d y d x  \\
    & \quad 
    \leqslant \frac{1}{p} \int_{\Omega}|\nabla f|^p d x + \frac{\lambda}{2} \frac{1}{p}  \int_{\Omega} \int_{\Omega}  J(x-y)|f(y)-f(x)|^{p}  d y d x,  
\end{aligned}
$$
for any $0\leqslant t_1 \leqslant T$. By the non-degeneracy of $g$, \eqref{eq:p-estimate_2} follows. 
\end{proof}

\subsection{The main results}
We first give the definition of the weak solution to problem \eqref{eq:tv_case}.
\begin{definition}\label{def:entro}
    The weak solution to \eqref{eq:tv_case} is a function $u \in C\left([0, T] ; L^2(\Omega)\right) \cap L^{\infty}(0, T ; B V(\Omega)), \frac{\partial u}{\partial t} \in L^2\left(Q_T\right)$, 
    which satisfies $u(x,0)= f(x)$ a.e. on $x \in \Omega$, and
    there exist 
    $\mathbf{z} \in  X_2 \left(Q_T\right)$
    and  $h \in L^{\infty}(\Omega \times \Omega \times ( 0, T ) ) $ with $\|\mathbf{z}\|_\infty \leqslant 1$ and $\|h\|_{\infty} \leqslant 1$ such that $h( x, y,t)=-h( y, x,t)$,   
    \begin{equation}\label{eq:entro_con_2}
    \frac{\partial u}{\partial t}=\operatorname{div}((1-g)\mathbf{z})+\lambda \int_{\Omega} \frac{g(x)+g(y)}{2} J(x-y) h(x, y,t) d y, \text{ in } \mathcal{D}^{\prime}\left(Q_T\right), 
\end{equation}
and
\begin{equation}\label{eq:entro_con_3}
    \begin{aligned}
    \int_{\Omega}(u-\phi) \frac{\partial u}{\partial t} d x 
    \leqslant&
    \frac{\lambda}{2} \int_{\Omega} \int_{\Omega} \frac{g(x)+g(y)}{2}  J(x-y) h(x,y,t)\left(\phi(y,t)-\phi(x,t)\right)d y d x  \\
    & - \frac{\lambda}{2} \int_{\Omega} \int_{\Omega} \frac{g(x)+g(y)}{2}  J(x-y) \left| u(y,t)-u(x,t)\right|   d y d x  \\
    & + \int_{\Omega}((1-g)\mathbf{z} , D \phi)
    -\int_{\Omega}(1-g) |Du|,
    \quad  \text{ a.e. on } [0, T], 
    \end{aligned}
\end{equation}
for every 
$\phi \in L^{\infty}\left(0, T ; BV(\Omega) \cap L^2(\Omega) \right)$.
\end{definition}

\begin{remark}
    The term $\int_{\Omega}((1-g)\mathbf{z} , D \phi) = -\int_{\Omega}\operatorname{div}((1-g)\mathbf{z})  \phi dx$ in \eqref{eq:entro_con_3} by Green's formula and the fact that $[\mathbf{z}, \nu]=0$ holds $\mathcal{H}^{N-1}-\text {a.e. on } \partial \Omega$, which will be verified in Prop. \ref{prop:equivalent}. 
\end{remark}

\begin{theorem}\label{th:exit_unique}
Assume that $g \in C(\overline{\Omega})$  with 
    $0 <g_0 \leqslant g(x) \leqslant g_1 <1$. If $f \in BV(\Omega) \cap L^2(\Omega)$, then problem \eqref{eq:tv_case} admits a unique solution.
\end{theorem}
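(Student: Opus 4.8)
The plan is to obtain the solution as the limit $p\to1^+$ of the $p$-Laplacian solutions from Lemma \ref{p-estimate}, exploiting that the energy estimate there is uniform in $p$. Since Lemma \ref{p-estimate_2} requires $f\in W^{1,p}(\Omega)$ while we only assume $f\in BV(\Omega)\cap L^2(\Omega)$, I would first mollify the datum, choosing $f_\varepsilon\in C^\infty(\overline\Omega)$ with $f_\varepsilon\to f$ in $L^2(\Omega)$ and $\int_\Omega|\nabla f_\varepsilon|\to\int_\Omega|Df|$ (strict convergence), together with convergence of the nonlocal energy of $f_\varepsilon$. For fixed $\varepsilon$ the norm $\|f_\varepsilon\|_{W^{1,p}(\Omega)}$ converges to $\|f_\varepsilon\|_{W^{1,1}(\Omega)}$ as $p\to1^+$, which by strict convergence stays bounded uniformly in $\varepsilon$; hence, letting $p\to1^+$ first and then $\varepsilon\to0$, Lemmas \ref{p-estimate}--\ref{p-estimate_2} furnish bounds, uniform in both parameters, for the approximants $u_p^\varepsilon$ in $L^\infty(0,T;BV(\Omega))\cap L^\infty(0,T;L^2(\Omega))$, for $\partial_t u_p^\varepsilon$ in $L^2(Q_T)$, and for the two flux quantities below. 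The spatial $BV$-bound follows from $\int_\Omega|\nabla u_p^\varepsilon|\le|\Omega|^{1/p'}(\int_\Omega|\nabla u_p^\varepsilon|^p)^{1/p}$, which stays bounded as $p\to1^+$.

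Next I would set up the compactness. The uniform spatial $BV$-bound plus the $L^2(Q_T)$-bound on $\partial_t u_p^\varepsilon$ give, via the Aubin--Lions--Simon lemma and the compact embedding of $BV(\Omega)$ into $L^2(\Omega)$, strong convergence of a subsequence $u_p^\varepsilon\to u$ in $C([0,T];L^2(\Omega))$; this sidesteps the lack of reflexivity and separability of $BV(\Omega)$ emphasized in the introduction. For the local flux, $\||\nabla u_p^\varepsilon|^{p-2}\nabla u_p^\varepsilon\|_{L^{p'}(Q_T)}^{p'}=\int_0^T\int_\Omega|\nabla u_p^\varepsilon|^p\le C$, so letting $p\to1^+$ (hence $p'\to\infty$) a Hölder--interpolation argument yields a weak-$*$ limit $\mathbf{z}\in L^\infty(Q_T;\mathbb{R}^N)$ with $\|\mathbf{z}\|_\infty\le1$, and identically for the antisymmetric integrand $|u_p^\varepsilon(y)-u_p^\varepsilon(x)|^{p-2}(u_p^\varepsilon(y)-u_p^\varepsilon(x))$, whose weak-$*$ limit $h$ inherits $\|h\|_\infty\le1$ and the antisymmetry $h(x,y,t)=-h(y,x,t)$. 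Passing to the limit in the weak formulation \eqref{weak_P} then produces the distributional identity \eqref{eq:entro_con_2}, and the Green's formula of Lemma \ref{gren} gives $[\mathbf{z},\nu]=0$ on $\partial\Omega$.

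The delicate step is recovering the variational inequality \eqref{eq:entro_con_3}, which encodes that $\mathbf{z}$ is aligned with $Du$ in the Anzellotti sense. I would derive it from lower semicontinuity (Lemma \ref{weak_continuous_u}) together with the energy identity obtained by testing \eqref{weak_P} with $u_p^\varepsilon-\phi$: convexity of $r\mapsto|r|^p$ yields the $p$-analogue of \eqref{eq:entro_con_3} for each admissible $\phi$, and the main work is to pass to the limit in the two energy terms $\int_0^{t_1}\int_\Omega(1-g)|\nabla u_p^\varepsilon|^p$ and its nonlocal counterpart, showing they dominate (via lower semicontinuity) $\int_\Omega(1-g)|Du|$ and $\frac{\lambda}{2}\int_\Omega\int_\Omega\frac{g(x)+g(y)}{2}J|u(y)-u(x)|$, and to identify $\int_\Omega((1-g)\mathbf{z},D\phi)$ as the weak-$*$ limit of $\int_\Omega(1-g)|\nabla u_p^\varepsilon|^{p-2}\nabla u_p^\varepsilon\cdot\nabla\phi$. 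This is the classical obstruction in total variation flow, namely establishing $((1-g)\mathbf{z},Du)=(1-g)|Du|$ as measures, and I expect it to be the hardest point; I would handle it through Young's inequality $a\cdot b\le\frac1{p'}|a|^{p'}+\frac1p|b|^p$ applied to the flux $\mathbf{z}_p$ and $\nabla u_p^\varepsilon$, combined with the lower semicontinuity bounds, taking first $p\to1^+$ and then $\varepsilon\to0$.

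Finally, uniqueness follows from the monotone structure built into \eqref{eq:entro_con_3}. Given two solutions $u_1,u_2$ with the same datum $f$, I would insert $\phi=u_2(\cdot,t)$ into the inequality for $u_1$ and $\phi=u_1(\cdot,t)$ into the inequality for $u_2$ and add them. The left-hand sides combine to $\frac12\frac{d}{dt}\|u_1-u_2\|_{L^2(\Omega)}^2$, while the bounds $\|\mathbf{z}_i\|_\infty\le1$, $\|h_i\|_\infty\le1$ together with \eqref{eq:measure_equality} force every flux pairing on the right to be dominated by the corresponding total-variation term, so the entire right-hand side is nonpositive. Integrating in $t$ and using $u_1(0)=u_2(0)=f$ yields $u_1\equiv u_2$.
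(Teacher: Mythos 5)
Your overall strategy coincides with the paper's: approximate $f$ by smooth data, solve the $p$-Laplacian problem \eqref{eq:p-lap}, use the $p$-uniform estimates of Lemmas \ref{p-estimate}--\ref{p-estimate_2}, extract limits of the two fluxes, pass to the limit in \eqref{weak_P} to get \eqref{eq:entro_con_2} and the variational inequality \eqref{eq:entro_con_3}, and prove uniqueness by the standard doubling argument. Your route to $\|\mathbf{z}\|_\infty\leqslant 1$ via the bound $\||\nabla u_p|^{p-2}\nabla u_p\|_{L^{p'}(Q_T)}^{p'}=\int_{Q_T}|\nabla u_p|^p\leqslant M$ and $p'\to\infty$ is a legitimate (and arguably cleaner) alternative to the paper's decomposition of the flux into $r_m+s_m$ on the level sets $\{|\nabla u_p|>m\}$; both deliver the same conclusion.

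There is, however, one genuine error in your compactness step: $BV(\Omega)$ is \emph{not} compactly embedded in $L^2(\Omega)$ for $N\geqslant 2$. The embedding $BV(\Omega)\hookrightarrow L^s(\Omega)$ is compact only for $s<\frac{N}{N-1}$; for $N=2$ the borderline exponent is exactly $2$ (continuous but not compact), and for $N\geqslant 3$ one has $\frac{N}{N-1}<2$, so $BV$ need not even embed into $L^2$. Consequently your claim of strong convergence $u_p^\varepsilon\to u$ in $C([0,T];L^2(\Omega))$ is not justified. The correct (and sufficient) conclusion, which is what the paper establishes via Simon's compactness result, is strong convergence in $L^1(Q_T)$ (equivalently in $C([0,T];L^s(\Omega))$ for $s<\frac{N}{N-1}$); combined with the weak convergence of $\partial_t u_p$ in $L^2(Q_T)$ and the uniform $L^\infty(0,T;L^2)$ bound, this is all that the limit passages actually require, and the membership $u\in C([0,T];L^2(\Omega))$ of the limit follows separately from $u,\partial_t u\in L^2(Q_T)$. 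So the error is repairable, but as written the step fails. A second, smaller gap: your derivation of \eqref{eq:entro_con_3} produces the inequality only for $\phi\in L^2(0,T;W^{1,1}(\Omega)\cap L^2(\Omega))$, where the pairing $\int_\Omega((1-g)\mathbf{z},D\phi)$ reduces to $\int_\Omega(1-g)\mathbf{z}\cdot\nabla\phi\,dx$; your uniqueness argument then inserts $\phi=u_2\in L^\infty(0,T;BV(\Omega))$, which requires extending the inequality to $BV$ test functions via strict approximation $\phi_n\to\phi$ with $\int_\Omega(1-g)|\nabla\phi_n|\,dx\to\int_\Omega(1-g)|D\phi|$ and convergence of the Anzellotti pairings. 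The paper carries out this extension explicitly (using the density results it cites); you should not skip it.
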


Some equivalent forms of the weak solution to the problem \eqref{eq:tv_case} can be given to better characterize this solution.
\begin{prop}[Characterize the solution]\label{prop:equivalent}
    Suppose u is the weak solution of  \eqref{eq:tv_case}, then  \eqref{eq:entro_con_3} is equivalent to the following assertions \\
    (a)  
    \begin{gather}
        J(x-y) \frac{g(x)+g(y)}{2} h(x, y,t) \in J(x-y) \frac{g(x)+g(y)}{2} \operatorname{sign}(u(y,t)-u(x,t)), \label{eq:equil_1}\\
        \int_{\Omega}((1-g)\mathbf{z}, D u)= \int_{\Omega}(1-g)|D u|,  \label{eq:equil_1_1}\\
        {[\mathbf{z}, \nu]=0, \quad \mathcal{H}^{N-1}-\text { a.e. on } \partial \Omega. }\label{eq:equil_2} 
    \end{gather}
    (b)  
    \eqref{eq:entro_con_3} holds with equality instead of the inequality. 
\end{prop}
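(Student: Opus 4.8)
The plan is to test the distributional equation \eqref{eq:entro_con_2} against the admissible function $u-\phi$, compare the resulting identity with the variational inequality \eqref{eq:entro_con_3}, and reorganize every term into a sum of manifestly signed quantities whose vanishing is exactly assertion (a). The whole argument is carried out at a.e.\ fixed $t$, with $\phi(\cdot,t)$ playing the role of a $BV$ test function.

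First I would justify that $u-\phi$ is an admissible test function for \eqref{eq:entro_con_2}: since $\partial_t u\in L^2(Q_T)$, $\operatorname{div}((1-g)\mathbf z)\in L^2$, $h\in L^\infty$ and $u,\phi\in BV(\Omega)\cap L^2(\Omega)$, the pairing of \eqref{eq:entro_con_2} with $u-\phi$ makes sense after a density argument. Multiplying \eqref{eq:entro_con_2} by $(u-\phi)$ and integrating in $x$, I would treat the local term by Green's formula (Lemma~\ref{gren}) applied to $(1-g)\mathbf z\in X_2(\Omega)$, producing $-\int_\Omega((1-g)\mathbf z,Du)+\int_\Omega((1-g)\mathbf z,D\phi)+\int_{\partial\Omega}(1-g)[\mathbf z,\nu](u-\phi)\,d\mathcal H^{N-1}$, using that the trace factorizes as $[(1-g)\mathbf z,\nu]=(1-g)[\mathbf z,\nu]$ since $1-g$ is continuous. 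The nonlocal term I would symmetrize: exchanging $x\leftrightarrow y$ and invoking the radial symmetry of $J$, the symmetry of $g(x)+g(y)$, and the antisymmetry $h(x,y,t)=-h(y,x,t)$ converts $\lambda\int_\Omega(u-\phi)\int_\Omega\frac{g(x)+g(y)}{2}J\,h\,dy\,dx$ into $-\frac{\lambda}{2}\int_\Omega\int_\Omega\frac{g(x)+g(y)}{2}J(x-y)(u(y)-u(x))h\,dy\,dx$ plus the analogous $\phi$-term. This yields an exact identity for $\int_\Omega(u-\phi)\partial_t u\,dx$.

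Next I would subtract this identity from \eqref{eq:entro_con_3}. The terms $\int_\Omega((1-g)\mathbf z,D\phi)$ and $\frac{\lambda}{2}\int\int\frac{g(x)+g(y)}{2}J\,h\,(\phi(y)-\phi(x))$ cancel, and \eqref{eq:entro_con_3} collapses to the requirement, for every admissible $\phi$ and a.e.\ $t$,
\begin{equation*}
A+\int_{\partial\Omega}(1-g)[\mathbf z,\nu](u-\phi)\,d\mathcal H^{N-1}\le 0,
\end{equation*}
where
\begin{equation*}
A=\Big(\int_\Omega(1-g)|Du|-\int_\Omega((1-g)\mathbf z,Du)\Big)+\frac{\lambda}{2}\int_\Omega\int_\Omega\frac{g(x)+g(y)}{2}J\big(|u(y)-u(x)|-(u(y)-u(x))h\big)\,dy\,dx .
\end{equation*}
The crucial observation is $A\ge 0$: the first bracket is nonnegative because $\|\mathbf z\|_\infty\le1$ and $1-g\ge0$ give $((1-g)\mathbf z,Du)\le(1-g)|Du|$ as measures by \eqref{eq:measure_equality}, while the nonlocal integrand is nonnegative because $\|h\|_\infty\le1$ forces $(u(y)-u(x))h\le|u(y)-u(x)|$ and $\frac{g(x)+g(y)}{2}J\ge0$.

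Finally I would exploit the arbitrariness of $\phi$. Since $A$ is independent of $\phi$ and the boundary term depends only on the trace of $\phi$, and the trace operator $BV(\Omega)\to L^1(\partial\Omega)$ is onto, choosing $\phi$ whose trace makes $\int_{\partial\Omega}(1-g)[\mathbf z,\nu]\phi$ arbitrarily negative would drive the left-hand side to $+\infty$ unless $(1-g)[\mathbf z,\nu]=0$; as $1-g\ge1-g_1>0$ this gives \eqref{eq:equil_2} and annihilates the boundary term. The reduced inequality $A\le0$ together with $A\ge0$ forces $A=0$, and since $A$ is a sum of two nonnegative quantities each must vanish, yielding \eqref{eq:equil_1_1} and, from the pointwise vanishing of a nonnegative integrand, the sign relation \eqref{eq:equil_1} (checked on the three cases $u(y)\gtrless u(x)$ and $u(y)=u(x)$). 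Conversely, assuming (a) makes $A=0$ and kills the boundary term, so \eqref{eq:entro_con_3} holds with equality, which is (b); and (b) trivially implies \eqref{eq:entro_con_3}. The main obstacle I anticipate is this boundary step: rigorously justifying the factorization $[(1-g)\mathbf z,\nu]=(1-g)[\mathbf z,\nu]$ and the free prescription of $BV$ boundary traces, uniformly enough that the a.e.-$t$ inequality survives, which is precisely where surjectivity of the trace and continuity of $g$ are needed.
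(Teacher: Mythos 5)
Your proposal is correct and follows essentially the same route as the paper: both test \eqref{eq:entro_con_2} against $u-\phi$ via Green's formula and the nonlocal symmetrization, compare with \eqref{eq:entro_con_3}, and use $\|\mathbf z\|_\infty\leqslant 1$, $\|h\|_\infty\leqslant 1$ to force the nonnegative defect terms to vanish. The only difference is organizational: the paper extracts \eqref{eq:equil_1}--\eqref{eq:equil_1_1} by the specialization $\phi=u$ (which kills the boundary term automatically) and then obtains \eqref{eq:equil_2} from $\phi=u\pm\varphi$ with $\varphi\in C^\infty(\overline{Q_T})$, $\varphi\geqslant 0$, whereas you package both steps into the single inequality $A+\int_{\partial\Omega}(1-g)[\mathbf z,\nu](u-\phi)\,d\mathcal H^{N-1}\leqslant 0$ and invoke surjectivity of the $BV$ trace.
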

\begin{remark}[\cite{iindex}]
    \eqref{eq:equil_1} is equivalent to
    $$
    \begin{aligned}
    &\frac{1}{2}\int_{0}^{T} \int_{\Omega} \int_{\Omega} \frac{g(x)+g(y)}{2}J(x-y)|u(y,t)-u(x,t)| d y d x dt\\
    &=-\int_{0}^{T}\int_{\Omega} \int_{\Omega} \frac{g(x)+g(y)}{2}J(x-y) h(x, y,t) d y u(x,t) d x dt\\
    &=\frac{1}{2}\int_{0}^{T} \int_{\Omega} \int_{\Omega} \frac{g(x)+g(y)}{2}J(x-y)h(x, y,t)\left(u(y,t)-u(x,t)\right) d y d x dt.
    \end{aligned}
    $$
\end{remark}

\begin{prop}\label{prop:Jg}
    Assume $u_p$ is the weak solution of \eqref{eq:p-lap} for $1<p<\infty$, and $u$ is the weak solution of \eqref{eq:tv_case}. Then 
    \begin{equation}\label{eq:nonlocal_p_2_1}
        \begin{aligned}
        &\lim_{p \rightarrow 1^{+}} \int_0^{T} \int_{\Omega}\int_{\Omega} \frac{g(x)+g(y)}{2} J(x-y)\left|u_{p}(y,t)-u_{p}(x,t)\right|^{p} d y d x dt \\
        &= \int_0^{T} \int_{\Omega} \int_{\Omega} \frac{g(x)+g(y)}{2} J(x-y) |u(y,t)-u(x,t)|dyd x d t .
        \end{aligned}
    \end{equation}
    \end{prop}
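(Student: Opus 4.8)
Throughout, abbreviate the left- and right-hand sides of \eqref{eq:nonlocal_p_2_1} by $\mathcal{A}_p$ and $\mathcal{A}$, and set $\mathcal{B}_p:=\int_0^T\int_\Omega(1-g)|\nabla u_p|^p\,dx\,dt$ and $\mathcal{B}:=\int_0^T\int_\Omega(1-g)|Du|\,dt$. The plan is to prove $\liminf_{p\to1^+}\mathcal{A}_p\geqslant\mathcal{A}$ by lower semicontinuity and the matching bound $\limsup_{p\to1^+}\mathcal{A}_p\leqslant\mathcal{A}$ through energy identities. The essential input is the convergence $u_p\to u$ in $C([0,T];L^2(\Omega))$, which comes from the construction of the weak solution in the proof of Theorem \ref{th:exit_unique} (where $u$ is obtained as the $p\to1^+$ limit of $u_p$ and identified by uniqueness). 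In particular $u_p(y,t)-u_p(x,t)\to u(y,t)-u(x,t)$ strongly in $L^2(\Omega\times\Omega\times(0,T))$ and $\|u_p(T)\|_{L^2(\Omega)}\to\|u(T)\|_{L^2(\Omega)}$.

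For the lower bound on the nonlocal term I would use Young's inequality $|s|^p\geqslant p\,\psi s-(p-1)|\psi|^{p'}$ (with $p'=p/(p-1)$), applied pointwise with $\psi=h(x,y,t)$, $\|h\|_\infty\leqslant1$, coming from the weak solution $u$. Multiplying by the nonnegative kernel $\tfrac{g(x)+g(y)}{2}J(x-y)$ and integrating gives
$$
\begin{aligned}
\mathcal{A}_p\geqslant{}&p\int_0^T\!\!\int_\Omega\!\!\int_\Omega\tfrac{g(x)+g(y)}{2}J(x-y)\,h\,\big(u_p(y)-u_p(x)\big)\,dy\,dx\,dt\\
&{}-(p-1)\int_0^T\!\!\int_\Omega\!\!\int_\Omega\tfrac{g(x)+g(y)}{2}J(x-y)\,dy\,dx\,dt.
\end{aligned}
$$
As $p\to1^+$ the last integral is finite so its contribution vanishes, while the first term converges to $\int_0^T\int_\Omega\int_\Omega\tfrac{g(x)+g(y)}{2}J\,h\,(u(y)-u(x))=\mathcal{A}$, using the strong $L^2$ convergence of the finite differences against the fixed function $\tfrac{g(x)+g(y)}{2}J\,h\in L^2$, together with the identity \eqref{eq:equil_1} recorded in the Remark after Proposition \ref{prop:equivalent}. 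This yields $\liminf_{p\to1^+}\mathcal{A}_p\geqslant\mathcal{A}$. For the local term, the scalar inequality $|\nabla u_p|\leqslant\tfrac1p|\nabla u_p|^p+\tfrac1{p'}$ gives $\int_0^T\int_\Omega(1-g)|\nabla u_p|\leqslant\tfrac1p\mathcal{B}_p+\tfrac{p-1}{p}\int_0^T\int_\Omega(1-g)$; combining the lower semicontinuity of the weighted total variation (Lemma \ref{weak_continuous_u}, applied a.e.\ in $t$ along an $L^1$-convergent subsequence) with Fatou's lemma in $t$ then gives $\mathcal{B}\leqslant\liminf_{p\to1^+}\mathcal{B}_p$.

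The matching upper bound comes from energy identities. Testing \eqref{weak_P} with $\varphi=u_p$ over $[0,T]$ and symmetrizing the nonlocal term in $(x,y)$ (so that $g(x)$ becomes $\tfrac{g(x)+g(y)}{2}$) yields the dissipation identity $\tfrac12\|u_p(T)\|_{L^2}^2+\mathcal{B}_p+\tfrac\lambda2\mathcal{A}_p=\tfrac12\|f\|_{L^2}^2$. For the limit problem I would pair \eqref{eq:entro_con_2} with $u$: the divergence term is treated by Green's formula (Lemma \ref{gren}) together with $[\mathbf{z},\nu]=0$ and $\int_\Omega((1-g)\mathbf{z},Du)=\int_\Omega(1-g)|Du|$ from \eqref{eq:equil_1_1}--\eqref{eq:equil_2}, while the nonlocal term is symmetrized and collapsed by \eqref{eq:equil_1}; integrating in $t$ gives $\tfrac12\|u(T)\|_{L^2}^2+\mathcal{B}+\tfrac\lambda2\mathcal{A}=\tfrac12\|f\|_{L^2}^2$. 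Since $\|u_p(T)\|_{L^2}\to\|u(T)\|_{L^2}$, subtracting the two identities shows $\tfrac\lambda2\mathcal{A}_p+\mathcal{B}_p\to\tfrac\lambda2\mathcal{A}+\mathcal{B}$. Writing $\tfrac\lambda2\mathcal{A}_p=(\tfrac\lambda2\mathcal{A}_p+\mathcal{B}_p)-\mathcal{B}_p$ and using $\liminf\mathcal{B}_p\geqslant\mathcal{B}$ forces $\limsup_{p\to1^+}\tfrac\lambda2\mathcal{A}_p\leqslant\tfrac\lambda2\mathcal{A}$, i.e.\ $\limsup_{p\to1^+}\mathcal{A}_p\leqslant\mathcal{A}$. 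Combined with the lower bound this gives $\mathcal{A}_p\to\mathcal{A}$, which is \eqref{eq:nonlocal_p_2_1}.

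The step I expect to be the main obstacle is the rigorous derivation of the energy identity for the limit ($TV$) problem, because $u$ is only of bounded variation and is not an admissible test function in $\mathcal{D}'(Q_T)$. Legitimizing the pairing of \eqref{eq:entro_con_2} with $u$ requires the full structural content of Proposition \ref{prop:equivalent}---the well-definedness of the measure $(\mathbf{z},Du)$, the vanishing weak trace, and the collapse of the nonlocal bracket via \eqref{eq:equil_1}---together with a justification that $\int_0^T\int_\Omega u\,\partial_t u=\tfrac12\|u(T)\|_{L^2}^2-\tfrac12\|f\|_{L^2}^2$ using $\partial_t u\in L^2(Q_T)$. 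A secondary technical point is the local lower-semicontinuity step, where one must first pass from the $p$-energy $\int(1-g)|\nabla u_p|^p$ to the weighted total variation by Young's inequality before invoking Lemma \ref{weak_continuous_u}.
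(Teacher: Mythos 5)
Your overall architecture coincides with the paper's: both arguments split \eqref{eq:nonlocal_p_2_1} into a $\liminf$ bound obtained from semicontinuity and a matching $\limsup$ bound obtained by comparing the energy identity for the $p$-problem (test \eqref{weak_P} with $u_p$) against the one for the limit problem (pair \eqref{eq:entro_con_2} with $u$, collapsing the local term via \eqref{eq:equil_1_1}--\eqref{eq:equil_2} and the nonlocal term via \eqref{eq:equil_1}), and then absorbing the local energies through $\liminf_{p\to 1^+}\int_0^T\int_\Omega(1-g)|\nabla u_p|^p\geqslant\int_0^T\int_\Omega(1-g)|Du|$ (H\"older plus Lemma \ref{weak_continuous_u}, exactly as in \eqref{eq:green_f}). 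Your lower bound via the Fenchel--Young inequality $|s|^p\geqslant p\,h\,s-(p-1)|h|^{p'}$ tested against the subgradient $h$ is a genuinely different and rather clean route; the paper instead reuses the Fatou/weighted-H\"older estimate \eqref{other_hand_result} from the existence proof, which needs only \eqref{eq:L1_strong} and not the characterization \eqref{eq:equil_1}. Both work; yours makes the role of the multivalued sign explicit, the paper's is self-contained within Theorem \ref{th:exit_unique}.

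The one assertion you should not rely on is the ``essential input'' $u_p\to u$ in $C([0,T];L^2(\Omega))$ and the consequent $\|u_p(T)\|_{L^2}\to\|u(T)\|_{L^2}$. The compactness available in the construction is only $C([0,T];L^s(\Omega))$ for $s<\tfrac{N}{N-1}\leqslant 2$, so norm convergence at the endpoint $t=T$ in $L^2$ is not justified. Fortunately your subtraction of the two energy identities only needs the one-sided estimate $\liminf_{p\to1^+}\|u_p(T)\|_{L^2}^2\geqslant\|u(T)\|_{L^2}^2$, which follows from weak $L^2$-convergence of $u_p(T)$, and the inequality then propagates in the right direction to give $\limsup_{p\to1^+}\mathcal{A}_p\leqslant\mathcal{A}$. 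This is precisely the device the paper uses in \eqref{eq:partial_u_p_t}, where only $\liminf_{p\to1^+}\int_\Omega(u(T)-u_p(T))^2\,dx\geqslant 0$ is invoked. With that replacement your proof is complete; the remaining steps (Green's formula for the pairing of \eqref{eq:entro_con_2} with $u$, the symmetrization of $g(x)$ into $\tfrac{g(x)+g(y)}{2}$, and the passage from $|\nabla u_p|^p$ to $|\nabla u_p|$ by Young's inequality before invoking lower semicontinuity) are all carried out in the paper in the same way you describe.
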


\begin{prop}\label{prop:1}
    If the initial value $f \in L^\infty (\Omega)$, then the weak solution of \eqref{eq:tv_case} is bounded with the estimate
    \begin{equation*}
        \|u \|_{L^\infty( \Omega)} \leqslant \|f\|_{L^\infty( \Omega)}, \text{ a.e. } t \in [0,T].
        \end{equation*}
\end{prop}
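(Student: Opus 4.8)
The plan is to prove the bound via a truncation (Stampacchia-type) argument applied to the entropy inequality \eqref{eq:entro_con_3}. Set $M=\|f\|_{L^\infty(\Omega)}$ and, to establish the upper bound $u\leqslant M$, I would insert the test function $\phi=\min\{u,M\}$ into \eqref{eq:entro_con_3}. This $\phi$ lies in $L^\infty(0,T;BV(\Omega)\cap L^2(\Omega))$ because truncation is $1$-Lipschitz (so the $L^2$ bound is preserved) and does not increase the total variation, hence it is an admissible competitor. Writing $w:=(u-M)^+=u-\phi\geqslant 0$, the left-hand side becomes $\int_\Omega w\,\frac{\partial u}{\partial t}\,dx=\tfrac12\frac{d}{dt}\int_\Omega w^2\,dx$, using $u\in C([0,T];L^2(\Omega))$ with $\frac{\partial u}{\partial t}\in L^2(Q_T)$ and the pointwise identity $w\,\frac{\partial u}{\partial t}=\tfrac12\frac{\partial}{\partial t}w^2$ valid a.e. The goal is then to show that the entire right-hand side of \eqref{eq:entro_con_3} is nonpositive for this $\phi$, so that $\frac{d}{dt}\int_\Omega w^2\,dx\leqslant0$.

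For the local part I would use $\|\mathbf{z}\|_\infty\leqslant1$ together with the measure estimate \eqref{eq:measure_equality} to get $\int_\Omega((1-g)\mathbf{z},D\phi)\leqslant\int_\Omega(1-g)|D\phi|$, and then invoke the fact that the weighted total variation $\int_\Omega(1-g)|D\cdot|$ with the nonnegative continuous weight $1-g$ does not increase under truncation, i.e. $\int_\Omega(1-g)|D\phi|\leqslant\int_\Omega(1-g)|Du|$ (on the diffuse part $D\phi=\mathbf{1}_{\{u<M\}}Du$, while jumps can only shrink). Consequently $\int_\Omega((1-g)\mathbf{z},D\phi)-\int_\Omega(1-g)|Du|\leqslant0$, so the local contribution is harmless.

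For the nonlocal part I would exploit the sign characterization \eqref{eq:equil_1}, which yields $|u(y,t)-u(x,t)|=h(x,y,t)(u(y,t)-u(x,t))$ wherever $J(x-y)\frac{g(x)+g(y)}{2}>0$. Substituting this identity and using $\phi-u=-w$, the two nonlocal terms collapse into the single expression $\frac{\lambda}{2}\int_\Omega\int_\Omega\frac{g(x)+g(y)}{2}J(x-y)\,h(x,y,t)\,\bigl(w(x)-w(y)\bigr)\,dy\,dx$. Since $w=(u-M)^+$ is a nondecreasing function of $u$, the increment $w(x)-w(y)$ always carries the sign opposite to $u(y,t)-u(x,t)$ (or vanishes), while $h(x,y,t)\in\operatorname{sgn}(u(y,t)-u(x,t))$; hence $h(x,y,t)\bigl(w(x)-w(y)\bigr)\leqslant0$ pointwise on $\{J>0\}$, and the whole nonlocal term is $\leqslant0$.

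Combining the three estimates gives $\frac{d}{dt}\int_\Omega w^2\,dx\leqslant0$ for a.e. $t$. Since $u(\cdot,0)=f$ and $\|f\|_{L^\infty}\leqslant M$ force $w(\cdot,0)=0$, the nonnegativity of $\int_\Omega w^2\,dx$ together with its monotonicity forces $w\equiv0$, that is $u\leqslant M$ a.e. The lower bound $u\geqslant -M$ follows symmetrically by testing with $\phi=\max\{u,-M\}$ (equivalently, by the oddness of both the local and the nonlocal operator under $u\mapsto-u$, $f\mapsto-f$), and the two bounds combine to $\|u\|_{L^\infty(\Omega)}\leqslant\|f\|_{L^\infty(\Omega)}$. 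I expect the main technical obstacle to be the rigorous justification that $\phi=\min\{u,M\}$ is admissible and that the truncation inequality for the weighted variation $\int_\Omega(1-g)|D\phi|$ holds with the weight $1-g$, including the correct treatment of the jump part of $Du$ and of the pairing measure $((1-g)\mathbf{z},D\phi)$ on the level set $\{u=M\}$.
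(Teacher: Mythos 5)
Your proposal is correct and follows essentially the same route as the paper: a Stampacchia-type truncation at the essential supremum of $f$, showing via the sign characterization of Proposition \ref{prop:equivalent} that $\tfrac{d}{dt}\int_\Omega\bigl((u-M)^+\bigr)^2\,dx\leqslant 0$. The only cosmetic difference is that you insert the truncation $\phi=\min\{u,M\}$ into the variational inequality \eqref{eq:entro_con_3}, whereas the paper multiplies \eqref{eq:entro_con_2} by $(u-K)_+$ with $K=\esssup_{x\in\Omega}(f(x))_+$ and applies Green's formula together with a decomposition over the level sets $\{u>K\}$ and $\{u\leqslant K\}$; both yield the same monotonicity and hence the same bound.
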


\begin{prop}\label{prop:2}
    Suppose that $u $ is a weak solution of \eqref{eq:tv_case}, then we have conservation of mass
        $$
        \frac{1}{|\Omega|} \int_{\Omega} u d x=\frac{1}{|\Omega|} \int_{\Omega} f d x,
        $$
    for all $t \geqslant 0 $.
\end{prop}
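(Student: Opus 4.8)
The plan is to integrate the evolution equation \eqref{eq:entro_con_2} over $\Omega$ and to verify that both the local divergence term and the nonlocal interaction term have vanishing spatial integral, so that $\frac{d}{dt}\int_\Omega u\,dx=0$. As a preliminary step I would observe that for a.e. $t\in[0,T]$ the relation \eqref{eq:entro_con_2} holds as an identity in $L^2(\Omega)$: the weak solution satisfies $\partial_t u\in L^2(Q_T)$ by Definition \ref{def:entro}, and the nonlocal term $\lambda\int_\Omega\frac{g(x)+g(y)}{2}J(x-y)h(x,y,t)\,dy$ is bounded, since $\|h\|_\infty\leqslant1$, $J$ is bounded with compact support, and $0<g_0\leqslant g\leqslant g_1<1$; consequently $\operatorname{div}((1-g)\mathbf z)\in L^2(\Omega)$ for a.e. $t$ as well. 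Every term may therefore be integrated in $x$ over the bounded domain $\Omega$, and it suffices to show $\int_\Omega\partial_t u\,dx=0$ for a.e. $t$.

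For the local term I would invoke the pairing identity recorded in the Remark following Definition \ref{def:entro}, namely $\int_\Omega((1-g)\mathbf z,D\phi)=-\int_\Omega\operatorname{div}((1-g)\mathbf z)\,\phi\,dx$, which follows from Green's formula (Lemma \ref{gren}) together with the no-flux relation $[\mathbf z,\nu]=0$ on $\partial\Omega$ supplied by \eqref{eq:equil_2} in Proposition \ref{prop:equivalent}. Choosing the admissible constant test function $\phi\equiv1\in BV(\Omega)\cap L^2(\Omega)$ and using $D1=0$ forces the left-hand side to vanish, whence $\int_\Omega\operatorname{div}((1-g)\mathbf z)\,dx=0$ for a.e. $t$.

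For the nonlocal term I would exploit the antisymmetry $h(x,y,t)=-h(y,x,t)$ together with the symmetry of the weight $\frac{g(x)+g(y)}{2}$ and of the radial kernel, $J(x-y)=J(y-x)$: interchanging the roles of $x$ and $y$ in the double integral sends the integrand to its own negative, so the double integral equals minus itself and is therefore zero. Combining the two vanishing contributions gives $\int_\Omega\partial_t u\,dx=0$ for a.e. $t$. Finally, since $u\in C([0,T];L^2(\Omega))$ and $\partial_t u\in L^2(Q_T)$, the scalar map $t\mapsto\int_\Omega u(x,t)\,dx$ is absolutely continuous with a.e. derivative $\int_\Omega\partial_t u\,dx=0$, hence constant; evaluating at $t=0$ with $u(x,0)=f(x)$ and dividing by $|\Omega|$ yields the claim. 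I expect the only genuinely delicate point to be the local term, where one must ensure the boundary flux is truly annihilated — this is exactly where the condition $[\mathbf z,\nu]=0$ from Proposition \ref{prop:equivalent} enters; the nonlocal cancellation and the time integration are routine.
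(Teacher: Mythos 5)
Your argument is correct, but it follows a genuinely different route from the paper's. The paper proves this in one line by substituting $\phi=u\pm1$ into the variational inequality \eqref{eq:entro_con_3}: adding a constant to $\phi$ changes neither $\phi(y,t)-\phi(x,t)$ nor $D\phi$, so the right-hand side is the same quantity that is $\leqslant 0$ by $\|h\|_\infty\leqslant1$ and $\|\mathbf z\|_\infty\leqslant1$ (via \eqref{eq:measure_equality}), while the left-hand side becomes $\mp\int_\Omega\partial_t u\,dx$; taking both signs forces $\int_\Omega\partial_t u\,dx=0$. This needs nothing beyond the defining inequality of the weak solution. You instead integrate the strong form \eqref{eq:entro_con_2} over $\Omega$, killing the divergence term by Green's formula together with the no-flux condition $[\mathbf z,\nu]=0$ from Proposition \ref{prop:equivalent}, and the nonlocal term by the antisymmetry of $h$ against the symmetric weight $\tfrac{g(x)+g(y)}{2}J(x-y)$. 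Your version is more transparent physically (zero boundary flux plus antisymmetric nonlocal exchange) and would survive in settings where only the PDE form is available, but it is less economical: it imports the trace characterization \eqref{eq:equil_2}, which is itself derived from \eqref{eq:entro_con_3}, whereas the paper's choice of test function bypasses all of that. There is no circularity in your route, since Proposition \ref{prop:equivalent} is proved independently of the conservation of mass, and your preliminary observations (that each term of \eqref{eq:entro_con_2} lies in $L^2(\Omega)$ for a.e.\ $t$, so the distributional identity may be tested against the constant function $1$) correctly handle the only delicate point.
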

\begin{proof}
    Taking $\phi = u \pm 1$ in \eqref{eq:entro_con_3}, it follows that $\int_\Omega \frac{\partial u}{\partial t} = 0$. Consequently, the function $t \mapsto \int_{\Omega} u(t)$ is constant. 
\end{proof}

\begin{prop}\label{prop:3}
    Suppose $f \in L^{\infty}(\Omega)$ and   $u$ is the weak solution of \eqref{eq:tv_case}, when $N=2$,  
    there exists a constant $M$ independent of $f$, such that 
    $$
    \left\|u -\overline{f}\right\|_{L^2(\Omega)} \leqslant M \frac{\left\|f\right\|_{L^2(\Omega)}^2}{t} \rightarrow 0, \quad \text { as } t \rightarrow \infty, 
    $$
    where $\overline{f}$ is the mean value of the initial condition $\overline{f}=\frac{1}{|\Omega|} \int_{\Omega} f(x) d x$. 
\end{prop}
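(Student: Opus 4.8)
The plan is to show that the energy $E(u(t))$ decays like $1/t$ along the flow and then to convert this into an $L^2$ decay of $u(t)-\overline{f}$ by means of a Sobolev--Poincar\'e inequality which, crucially, holds with exponent $2$ only when $N=2$. Two facts set the stage. First, constants are the global minimizers of $E$, on which $E\equiv 0$ (both the local and nonlocal terms vanish), so in particular $\overline{f}$ minimizes $E$ with $E(\overline f)=0$. Second, by conservation of mass (Prop.~\ref{prop:2}) we have $\frac{1}{|\Omega|}\int_\Omega u(t)\,dx=\overline f$ for all $t$, so $u(t)-\overline f$ has zero average and Poincar\'e-type inequalities apply to it. The hypothesis $f\in L^\infty(\Omega)$ enters through Prop.~\ref{prop:1}, which gives a uniform $L^\infty$, hence finite-energy, bound on $u(t)$, so that all quantities below are finite.

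Set $\psi(t):=\tfrac12\|u(t)-\overline f\|_{L^2(\Omega)}^2$; since $u\in C([0,T];L^2(\Omega))$ with $u_t\in L^2(Q_T)$, $\psi$ is absolutely continuous with $\psi'(t)=\int_\Omega (u-\overline f)\,u_t\,dx$ a.e. The first step is to test the weak formulation \eqref{eq:entro_con_3} with the constant competitor $\phi\equiv\overline f$. Because $D\phi=0$ and $\phi(y,t)-\phi(x,t)=0$, both $\phi$-dependent terms on the right-hand side (the $h$-term and $\int_\Omega((1-g)\mathbf z,D\phi)$) vanish, and only $-\tfrac{\lambda}{2}\int_\Omega\int_\Omega\frac{g(x)+g(y)}{2}J(x-y)|u(y,t)-u(x,t)|\,dy\,dx-\int_\Omega(1-g)|Du|=-E(u(t))$ survives. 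This yields the dissipation inequality $\psi'(t)\le -E(u(t))$; integrating over $[0,t]$ and using $\psi\ge 0$ gives $\int_0^t E(u(s))\,ds\le \psi(0)=\tfrac12\|f-\overline f\|_{L^2(\Omega)}^2$.

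To upgrade this time-averaged bound to a pointwise-in-time bound, I would use that $t\mapsto E(u(t))$ is non-increasing. This is where the gradient-flow structure is essential: \eqref{eq:entro_con_2}--\eqref{eq:entro_con_3} say exactly that $-u_t\in\partial E(u)$ for the convex, lower semicontinuous functional $E$, and for such flows the energy is non-increasing (equivalently $\tfrac{d}{dt}E(u(t))=-\|u_t(t)\|_{L^2(\Omega)}^2$ a.e.). With monotonicity in hand, $t\,E(u(t))\le \int_0^t E(u(s))\,ds\le \tfrac12\|f-\overline f\|_{L^2(\Omega)}^2$, i.e. $E(u(t))\le \|f-\overline f\|_{L^2(\Omega)}^2/(2t)$. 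Alternatively one checks directly that $t\mapsto tE(u(t))+\psi(t)$ is non-increasing, since $\tfrac{d}{dt}[tE+\psi]\le E(u)-t\|u_t\|_{L^2(\Omega)}^2-E(u)=-t\|u_t\|_{L^2(\Omega)}^2\le0$ by the subgradient inequality and the energy identity, which yields the same rate without proving monotonicity separately.

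Finally I would convert the energy bound into the claimed $L^2$ estimate. For $N=2$ the Sobolev--Poincar\'e inequality in $BV$ reads $\|v-\overline v\|_{L^2(\Omega)}\le C_S\int_\Omega|Dv|$, the borderline case $L^{N/(N-1)}=L^2$ being exactly why the statement is restricted to $N=2$. Applying it to $v=u(t)$, whose mean is $\overline f$, and bounding $\int_\Omega|Du(t)|\le (1-g_1)^{-1}\int_\Omega(1-g)|Du(t)|\le (1-g_1)^{-1}E(u(t))$ via $1-g\ge 1-g_1>0$ and the nonnegativity of the nonlocal part of $E$, I obtain $\|u(t)-\overline f\|_{L^2(\Omega)}\le \frac{C_S}{1-g_1}E(u(t))\le \frac{C_S}{2(1-g_1)}\frac{\|f-\overline f\|_{L^2(\Omega)}^2}{t}\le M\frac{\|f\|_{L^2(\Omega)}^2}{t}$, with $M=\frac{C_S}{2(1-g_1)}$ depending only on $\Omega$ and $g_1$ (using $\|f-\overline f\|_{L^2(\Omega)}\le\|f\|_{L^2(\Omega)}$). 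The main obstacle is the monotonicity step: rigorously justifying that $E(u(t))$ is non-increasing (equivalently, the energy identity) for the non-smooth $BV$ functional, where the naive chain rule is unavailable; this is handled either by the abstract theory of gradient flows of convex functionals in Hilbert space or by the self-contained Lyapunov computation above, with the absolute continuity of $\psi$ and of $t\mapsto E(u(t))$ requiring care.
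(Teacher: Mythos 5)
Your overall strategy (dissipation inequality, a monotonicity-in-time step, then the borderline $BV$ Sobolev--Poincar\'e inequality $\|v-\overline v\|_{L^2(\Omega)}\leqslant C\int_\Omega|Dv|$ for $N=2$) is the same skeleton as the paper's, and your identification of why $N=2$ is needed and how $f\in L^\infty$ enters is correct. The difference is \emph{which} quantity you take to be monotone. You prove $\int_0^t E(u(s))\,ds\leqslant \tfrac12\|f-\overline f\|_{L^2(\Omega)}^2$ and then need $t\mapsto E(u(t))$ non-increasing to extract the pointwise rate $E(u(t))\lesssim 1/t$, finally applying Poincar\'e at the single time $t$. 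The paper instead bounds $\int_0^t\|u(s)-\overline f\|_{L^2(\Omega)}\,ds\leqslant M\int_0^t\int_\Omega(1-g)|Du(s)|\,ds\leqslant \tfrac M2\|f\|_{L^2(\Omega)}^2$ (taking $\phi=0$ in \eqref{eq:entro_con_3} for the dissipation bound, Poincar\'e applied at each time $s$), and then uses only the monotonicity of $t\mapsto\|u(t)-\overline f\|_{L^2(\Omega)}$ to write $t\,\|u(t)-\overline f\|_{L^2(\Omega)}\leqslant\int_0^t\|u(s)-\overline f\|_{L^2(\Omega)}\,ds$.

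The gap in your version is precisely the step you flag: the monotonicity of $E(u(t))$ (equivalently the energy identity $\tfrac{d}{dt}E(u(t))=-\|u_t\|_{L^2}^2$) is not provided by Definition~\ref{def:entro} and is not established anywhere in the paper; invoking the abstract Brezis theory requires first identifying the weak solution with the semigroup generated by $\partial E$, which is an additional (nontrivial) characterization, and your alternative Lyapunov computation for $tE+\psi$ still consumes the energy identity, so it does not remove the difficulty. The irony is that your own first step already hands you the easier monotonicity: testing \eqref{eq:entro_con_3} with $\phi=\overline f$ gives $\psi'(t)\leqslant -E(u(t))\leqslant 0$, so $\psi(t)=\tfrac12\|u(t)-\overline f\|_{L^2(\Omega)}^2$ is non-increasing, and combining this with your time-integrated bound $\int_0^t E(u(s))\,ds\leqslant\psi(0)$ and the Poincar\'e inequality applied under the time integral (rather than at time $t$) closes the proof exactly as in the paper, with no appeal to energy monotonicity. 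I recommend restructuring along those lines; the rest of your argument, including the constant bookkeeping $\int_\Omega|Du|\leqslant(1-g_1)^{-1}\int_\Omega(1-g)|Du|$ and $\|f-\overline f\|_{L^2(\Omega)}\leqslant\|f\|_{L^2(\Omega)}$, is fine.
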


Finally, we show that our model can approximate local total variation model under suitable rescaling. More precisely, suppose $u_\varepsilon(x,t)$ is the weak solution to problem  
\begin{equation}\label{eq:TV_nltv_ep}
    \left\{\begin{array}{l}
    \frac{\partial u}{\partial t}=\operatorname{div}\left((1-g)\frac{D u}{|D u|} \right)+\frac{\lambda }{2} \int_{\Omega}  \frac{g(x)+g(y)}{2} J_{\varepsilon}(x-y)\frac{u(y, t)-u(x, t)}{|u(y, t)-u(x, t)|}d y, \quad(x, t) \in Q_T, \\
     \frac{\partial u}{\partial\vec{n}}=0, \quad(x, t) \in \partial \Omega \times(0, T), \\
    u(x, 0)=f(x), \quad x \in \Omega,
    \end{array}\right.
\end{equation}
where the rescaled kernel
$$
J_{\varepsilon}(x)=\frac{C_{J, 1}}{\varepsilon^{1+N}} J\left(\frac{x}{\varepsilon}\right), \quad C_{J, 1}^{-1}=\frac{1}{2} \int_{\mathbb{R}^N} J(z)\left|z_1\right| d z.
$$
Moreover, we suppose $u(x,t)$ is the weak solution to  the total variation model 
\begin{equation}\label{eq:TV_local}
    \left\{\begin{array}{l}
    \frac{\partial u}{\partial t}=\operatorname{div}\left(\frac{D u}{|D u|} \right), \quad(x, t) \in Q_T, \\
     \frac{\partial u}{\partial\vec{n}}=0, \quad(x, t) \in \partial \Omega \times(0, T), \\
    u(x, 0)=f(x), \quad x \in \Omega.
    \end{array}\right.
\end{equation}
Our
main concern is to show that $u_\varepsilon$ converge to $u$ as $\varepsilon \rightarrow 0$.
\begin{theorem}\label{th:nl2local}
    Let $f \in L^2(\Omega) $. Assume that $J$ satisfies $J(x) \geqslant J(y)$ for $|x| \leqslant|y|$, $g \in C(\overline{\Omega})$  with 
    $0 <g_0 \leqslant g(x) \leqslant g_1 <1$. Then
    $$
    \lim _{\varepsilon \rightarrow 0} \sup _{t \in[0, T]}\left\|u_{\varepsilon}(x, t)-u(x, t)\right\|_{L^1(\Omega)}=0.
    $$
\end{theorem}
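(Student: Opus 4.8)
The plan is to recast both evolution problems as gradient flows in the Hilbert space $H=L^2(\Omega)$ and to reduce the claim to a Mosco convergence of the underlying energies, after which an abstract theorem on the convergence of nonlinear semigroups closes the argument. Introduce the proper, convex, lower semicontinuous functionals $E_\varepsilon, E_0:L^2(\Omega)\to[0,+\infty]$ given by
\begin{align*}
E_\varepsilon(v) &= \int_\Omega (1-g)\,|Dv| + \frac{\lambda}{4}\int_\Omega\int_\Omega \frac{g(x)+g(y)}{2}\,J_\varepsilon(x-y)\,|v(y)-v(x)|\,dy\,dx, \\
E_0(v) &= \int_\Omega |Dv|,
\end{align*}
both extended by $+\infty$ outside $BV(\Omega)\cap L^2(\Omega)$. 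By the well-posedness and characterisation results for the rescaled equation (the analogue of Theorem \ref{th:exit_unique} together with Proposition \ref{prop:equivalent}), the weak solution $u_\varepsilon$ of \eqref{eq:TV_nltv_ep} is precisely the unique strong solution, in the Brezis--Komura sense, of the gradient flow $\frac{du_\varepsilon}{dt}\in-\partial E_\varepsilon(u_\varepsilon)$ with $u_\varepsilon(0)=f$; similarly $u$ is the gradient flow of $E_0$, i.e. the classical total variation flow \cite{andreu2000minimizing}, with the same datum $f\in L^2(\Omega)=\overline{D(E_\varepsilon)}=\overline{D(E_0)}$.

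Granting this reformulation, I would invoke the theorem of Attouch (equivalently the Brezis--Pazy resolvent criterion): if $E_\varepsilon\to E_0$ in the sense of Mosco in $L^2(\Omega)$, then the resolvents $(I+\mu\,\partial E_\varepsilon)^{-1}$ converge strongly to $(I+\mu\,\partial E_0)^{-1}$ for each $\mu>0$, and consequently the generated semigroups converge uniformly on compact time intervals. Since the initial datum $f$ is common to all $\varepsilon$, this yields $u_\varepsilon(t)\to u(t)$ in $L^2(\Omega)$ uniformly for $t\in[0,T]$. As $\Omega$ is bounded, Cauchy--Schwarz then upgrades this to $\sup_{t\in[0,T]}\|u_\varepsilon(t)-u(t)\|_{L^1(\Omega)}\le|\Omega|^{1/2}\sup_{t\in[0,T]}\|u_\varepsilon(t)-u(t)\|_{L^2(\Omega)}\to0$, which is exactly the assertion. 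The entire problem is thereby reduced to verifying the two Mosco conditions for the energies.

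For the recovery (limsup) condition, given $v\in BV(\Omega)\cap L^2(\Omega)$ I would take the constant sequence $v_\varepsilon\equiv v$ and use the Bourgain--Brezis--Mironescu and Davila localisation: the rescaling constant $C_{J,1}$ (and the coupling $\lambda$) are normalised precisely so that the nonlocal term of $E_\varepsilon(v)$ converges to the weighted total variation $\int_\Omega g\,|Dv|$, the weight $\tfrac{g(x)+g(y)}{2}$ localising to $g(x)$ by continuity of $g$; since the local term $\int_\Omega(1-g)|Dv|$ is independent of $\varepsilon$, one gets $E_\varepsilon(v)\to\int_\Omega(1-g)|Dv|+\int_\Omega g|Dv|=E_0(v)$ (for $v\notin BV(\Omega)$ the condition is trivial, $E_0(v)=+\infty$). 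For the liminf condition, let $v_\varepsilon\rightharpoonup v$ weakly in $L^2(\Omega)$ with $\liminf_\varepsilon E_\varepsilon(v_\varepsilon)<\infty$; along a subsequence the local term $\int_\Omega(1-g)|Dv_\varepsilon|$ stays bounded, and since $1-g\ge 1-g_1>0$ this gives a uniform bound on $\int_\Omega|Dv_\varepsilon|$, hence $L^1(\Omega)$-compactness by the Rellich theorem in $BV$, so $v_\varepsilon\to v$ strongly in $L^1(\Omega)$ and $v\in BV(\Omega)$. Weighted lower semicontinuity (Lemma \ref{weak_continuous_u}) then yields $\liminf_\varepsilon\int_\Omega(1-g)|Dv_\varepsilon|\ge\int_\Omega(1-g)|Dv|$, while the hypothesis that $J$ is radially nonincreasing ($J(x)\ge J(y)$ for $|x|\le|y|$) triggers Ponce's $\Gamma$-liminf estimate, giving $\liminf_\varepsilon\bigl(\text{nonlocal term}\bigr)\ge\int_\Omega g|Dv|$; superadditivity of $\liminf$ combines these into $\liminf_\varepsilon E_\varepsilon(v_\varepsilon)\ge E_0(v)$.

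The main obstacle is the liminf inequality for the nonlocal energy, that is, the sharp weighted nonlocal-to-local lower bound $\liminf_\varepsilon\frac{\lambda}{4}\int_\Omega\int_\Omega\tfrac{g(x)+g(y)}{2}J_\varepsilon(x-y)|v_\varepsilon(y)-v_\varepsilon(x)|\,dy\,dx\ge\int_\Omega g|Dv|$. Two points require care: (i) only weak $L^2$ convergence is available a priori, but here the coercive local term $\int_\Omega(1-g)|Dv_\varepsilon|$ supplies the needed $BV$-compactness for free, so that the radial monotonicity of $J$ is spent solely on the Ponce-type lower bound rather than on compactness; and (ii) the spatially varying weight $\tfrac{g(x)+g(y)}{2}$ must not spoil the localisation, which I would handle by sandwiching $g$ between $g(x)\pm\omega(\varepsilon)$ on the shrinking support of $J_\varepsilon(x-\cdot)$ using the uniform continuity of $g$ on $\overline\Omega$, and then letting $\varepsilon\to0$. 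Once Mosco convergence is established, the convergence of the flows and the final uniform $L^1$ estimate follow automatically from the nonlinear semigroup theory, requiring no regularity on $f$ beyond $f\in L^2(\Omega)$.
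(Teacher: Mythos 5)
Your proposal is essentially correct but follows a genuinely different route from the paper. The paper argues directly at the level of the evolution equation: it takes $\varphi=u_\varepsilon$ in the weak formulation to get uniform bounds on $\sup_t\|u_\varepsilon\|_{L^2}$ and on the rescaled nonlocal energy, invokes Lemma \ref{lm:lm_ep} to extract $L^1(Q_T)$-compactness and the weak-measure convergence of the difference quotients to $J(z)\,z\cdot Du$, passes to weak-$*$ limits $\Lambda$ and $\mathbf z$ of the fluxes $h_{\varepsilon_n}$ and $\mathbf z_{\varepsilon_n}$, assembles the limiting vector field $\xi=\frac{C_{J,1}}{2}g\int\Lambda z\,dz+(1-g)\mathbf z$, proves $\|\xi\|_\infty\leqslant1$ by the rotation argument, and verifies the entropy inequality \eqref{eq:TV_entropy} so that the limit is identified as the TV flow. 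You instead separate statics from dynamics: Mosco convergence $E_\varepsilon\to E_0$ in $L^2(\Omega)$ (recovery via the constant sequence and the Bourgain--Brezis--Mironescu/D\'avila localisation, liminf via the $BV$-coercivity of the local term plus Ponce's lower bound) followed by the Attouch/Brezis--Pazy theorem on convergence of the generated semigroups. Both are standard strategies in the Andreu--Maz\'on--Rossi--Toledo framework, and yours is viable here because Proposition \ref{prop:equivalent} indeed identifies the weak solution with the Brezis--Komura gradient flow of $E_\varepsilon$. Your route buys two things: the uniform-in-time convergence $\sup_{t\in[0,T]}\|u_\varepsilon(t)-u(t)\|_{L^2}\to0$ comes for free from the abstract theory (the paper's direct argument only produces subsequential $L^1(Q_T)$ convergence and leaves the upgrade to $\sup_t$ implicit), and, as you observe, the radial monotonicity of $J$ becomes dispensable since the local term already supplies $BV$-compactness; the paper's route, in exchange, is more self-contained and exhibits the limiting flux explicitly. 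Two caveats. First, track the constants: with the factor $\frac{\lambda}{2}$ in \eqref{eq:TV_nltv_ep} the correct energy is your $E_\varepsilon$, whose nonlocal part localises to $\frac{\lambda}{2}\int_\Omega g\,|Dv|$ rather than $\int_\Omega g\,|Dv|$, so $E_\varepsilon\to E_0=\int_\Omega|Dv|$ only under the normalisation $\lambda=2$; your parenthetical ``the coupling $\lambda$ is normalised'' papers over this, but the published proof has exactly the same issue (the factor $\lambda$ silently disappears between \eqref{eq:weak_ep} and \eqref{eq:change_var}), so it is a shared imprecision rather than a defect of your argument. Second, the limsup condition needs D\'avila's theorem as a true limit (not merely an upper bound) for general $v\in BV(\Omega)$ on a Lipschitz domain, together with the uniform-continuity sandwich for the weight $\frac{g(x)+g(y)}{2}$; you state both ingredients, and they do hold, but they should be cited and verified explicitly in a complete write-up.
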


\section{Proof of main results}\label{sec:pr}

\subsection{Proof of Theorem \ref{th:exit_unique}}
For initial value $f \in BV(\Omega)\cap L^2(\Omega)$, there exists a sequence $\{f_p\} \subset  C^{\infty}(\Omega) \cap B V(\Omega) \cap L^2(\Omega)$ \cite{evans2018measure}, such that  
\begin{equation}\label{eq:initial_conve}
f_p \rightarrow f, \text{ in } L^2(\Omega), \text{ and }
\int_{\Omega} |\nabla f_p|dx \rightarrow \int_{\Omega} |Df|.
\end{equation}
Suppose $u_p$ is the weak solution to problem \eqref{eq:p-lap} with initial value $u_p(x,0)=f_p(x)$, and $1<p<2$. Combining Lemma \ref{p-estimate}, Lemma \ref{p-estimate_2}  and $0 <1-g_1 \leqslant 1- g(x) \leqslant 1-g_0 <1$, we deduce 
\begin{equation}\label{eq:estimate_1}
    \begin{aligned}
&\int_0^T \int_{\Omega} \left( \frac{\partial u_p}{\partial t} \right)^2 dxdt+ \int_0^T \int_{\Omega} \int_{\Omega}  J(x-y)|u_p(y,t)-u_p(x,t)|^p d y d x d t \\
&\quad \quad \quad \quad \quad \quad \quad \quad \quad \  + \sup _{0<t<T } \int_{\Omega} |u_p(x, t)|^2 d x +  \sup _{0<t<T } \int_{\Omega}|\nabla u_p|^p d x   \\
&\leqslant C\left(\|f_p\|_{W^{1,p}(\Omega)}+\|f_p\|_{L^{2}(\Omega)}\right) \leqslant C\left(\|f\|_{BV(\Omega)}+\|f\|_{L^{2}(\Omega)}\right) \leqslant M,
    \end{aligned}
\end{equation}
where $M$ is independent of $p$.
Then, there exists a function $u \in L^{\infty}(0, T ; B V(\Omega)) \cap C\left([0, T] ; L^2(\Omega)\right)$ such that 
\begin{gather}
     \int_{\Omega} (1-g)|D u| \leqslant \liminf_{p \rightarrow 1^{+}} \int_{\Omega} (1-g)|\nabla u_p|dx,\label{eq:semi_1}\\
    \frac{\partial u_p}{\partial t} \rightharpoonup  \frac{\partial u}{\partial t} ,\quad \text { in } L^2\left(Q_T\right).\label{eq:l2_weak}
\end{gather}

Since $\{u_p\}$ is bounded in $L^{\infty}(0, T ; B V(\Omega))$, $\{\frac{\partial u_p}{\partial t}\}$ is bounded in $L^{2}(0, T ; L^1(\Omega))$, and the compact embedding of $BV(\Omega)$ into $L^s(\Omega)$ for any $s < \frac{N}{N-1}$, it follows that $\{u_p\}$ is compact in $C(0,T; L^s(\Omega))$ according to the compact result (corollary 4, \cite{simon1986compact}). 
Then, by the compactly embedding of $C(0,T; L^s(\Omega))$ into $L^s(Q_T)$, we have 
\begin{equation}\label{eq:L1_strong}
    u_p \rightarrow u, \quad \text { in } L^1\left(Q_T\right).
\end{equation}

Now, we prove \eqref{eq:entro_con_2}. For any  measurable subset $E \subset Q_T$, employing  h{\"o}lder's inequality and \eqref{eq:estimate_1}, we deduce 
\begin{equation*}
    \iint_{E}|\nabla u_p|^{p-1}  d x dt
    \leqslant  \left(\int_0^T \int_{\Omega} \left|\nabla u_p\right|^p d x dt \right)^{(p-1) / p}|E|^{1 / p}
    \leqslant  M^{(p-1) / p}\left|E\right|^{1 / p},
\end{equation*}

which implies that  $\{\left|\nabla u_p\right|^{p-2} \nabla u_p \}$  is equi-integrable in $L^1\left(Q_T; \mathbb{R}^N\right)$. According to the Dunford-Pettis Theorem, there exists $\mathbf{z} \in L^1\left(Q_T; \mathbb{R}^N\right)$ such that
\begin{equation}\label{eq:local_p}
\left|\nabla u_p\right|^{p-2} \nabla u_p \rightharpoonup \mathbf{z}, \quad \text { in } L^1\left(Q_T; \mathbb{R}^N\right). 
\end{equation}
Similarly for any measurable subset $G \subset \Omega \times \Omega \times (0,T)$, we have 
\begin{equation*}
    \begin{aligned}
    &\left|  \iiint_{G}   J(x-y) \left| u_{p}(y,t)-u_{p}(x,t)\right|^{p-2}\left(u_{p}(y,t)-u_{p}(x,t) \right)dydxdt\right|\\
    &\leqslant   \iiint_{G} J(x-y)\left|u_{p}(y,t)-u_{p}(x,t) \right|^{p-1}dydxdt \\
    &\leqslant M^{(p-1) / p}|G|^{1 / p},
    \end{aligned}
\end{equation*}

Then there exists $h(x, y,t) \in L^1\left(\Omega \times \Omega \times (0,T)\right)$ with $h(y,x,t)=-h(x, y,t)$ such that 
\begin{equation}\label{nonlocal_p}
    J(x-y)\left| u_{p}(y,t) - u_{p}(x,t)\right|^{p-2}\left(u_{p}(y,t) - u_{p}(x,t)\right) \rightharpoonup J(x-y) h(x, y,t)
\end{equation}
holds in $L^1\left(\Omega \times \Omega \times (0,T)\right)$.
Combining now \eqref{eq:l2_weak}, \eqref{eq:local_p}, \eqref{nonlocal_p}   and letting $p \rightarrow 1^+$ in \eqref{weak_P} we find  
\begin{equation*} 
    \begin{aligned}
    \int_0^{T}\int_{\Omega} \frac{\partial u}{\partial t} \varphi d x&+\int_0^{T} \int_{\Omega}(1-g)\mathbf{z} \cdot \nabla \varphi d x d t \\
    &+\frac{\lambda}{2} \int_0^{T} \int_{\Omega} \int_{\Omega} g(x) J(x-y) h(x, y,t) \left( \varphi(y,t) -\varphi(x,t) \right) d y d x d t=0, 
    \end{aligned}
    \end{equation*}
holds for any $\varphi \in C^1\left(\overline{Q_T}\right)$. 
Therefore \eqref{eq:entro_con_2} follows from  the equality 
\begin{equation}\label{eq:nonlocal_int_by_parts}
    \begin{aligned}
    &\frac{1}{2} \int_0^{T} \int_{\Omega} \int_{\Omega} g(x) J(x-y) h(x, y,t) \left( \varphi(y,t) -\varphi(x,t) \right) d y d x d t\\
    &=\frac{1}{2} \int_0^{T} \int_{\Omega} \int_{\Omega} \frac{g(x)+g(y)}{2} J(x-y) h(x, y,t) \left( \varphi(y,t) -\varphi(x,t) \right) d y d x d t\\
    &=-\int_0^{T} \int_{\Omega} \int_{\Omega} \frac{g(x)+g(y)}{2} J(x-y) h(x, y,t) dy  \varphi(x,t)dxdt.
    \end{aligned}
\end{equation}
For simplicity, the \eqref{eq:nonlocal_int_by_parts} is called the \emph{nonlocal integrate by parts formula} in our paper.

Next we will show $\|\mathbf{z}\|_\infty \leqslant 1$. Let 
$B_{p, m}=\left\{(x, t) \in Q_T:\left|\nabla u_p\right|>m\right\}$ for any $m>0$ and $1<p<2$. Then we have
\begin{equation*}
    m^p |B_{p, m}| \leqslant \iint_{B_{p, m}} |\nabla u_p|^{p} dxdt \leqslant M.
\end{equation*}
As above, there exists a function $r_m \in L^1\left(Q_T; \mathbb{R}^N\right)$ such that
$$
\left|\nabla u_p\right|^{p-2} \nabla u_p \chi_{B_{p, m}} \rightharpoonup  r_m, \quad \text { in } L^1\left(Q_T; \mathbb{R}^N\right), 
$$
where $\chi_{B_{p, m}}$ is the indicator of $B_{p, m}$. Now for any $\varphi \in L^{\infty}\left(Q_T; \mathbb{R}^N\right)$ with $\|\varphi\|_{L^{\infty}\left(Q_T; \mathbb{R}^N\right)} \leqslant 1$, 
\begin{equation*}
    \left|\int_0^T \int_{\Omega} |\nabla u_p|^{p-2} \nabla u_p \cdot \varphi \chi_{B_{p, m}}  dxdt \right| \leqslant  \iint_{B_{p, m}} |\nabla u_p|^{p-1} dxdt \leqslant \frac{M}{m}.
\end{equation*}
Letting $p \rightarrow 1^{+}$, we deduce 
\begin{equation}\label{eq:rm}
\int_0^T \int_{\Omega}\left|r_m\right| d x d t \leqslant \frac{M}{ m}, \quad \text { for any } m>0.
\end{equation}
On the other hand, for any measurable set $E \subset Q_T$ and any $\varphi \in L^{\infty}\left(Q_T; \mathbb{R}^N\right)$ with $\|\varphi\|_{L^{\infty}\left(Q_T; \mathbb{R}^N\right)} \leqslant 1$, we have 
\begin{equation}\label{eq:sm}
\left|\iint_{E} |\nabla u_p|^{p-2} \nabla u_p \cdot \varphi \chi_{Q_T / B_{p, m}} dxdt \right| 
\leqslant \iint_{E \cap  Q_T / B_{p, m}} |\nabla u_p|^{p-1}dxdt 
 \leqslant m^{p-1} |E|. 
\end{equation}
Letting $p \rightarrow 1^{+}$, we obtain that $\left|\nabla u_p\right|^{p-2} \nabla u_p  \chi_{Q_T / B_{p, m}}$ weakly converges to some function $s_m \in L^1\left(Q_T; \mathbb{R}^N\right)$. Let  $p \rightarrow 1^{+}$ in \eqref{eq:sm}, to deduce
$\left|\iint_{E}s_m \cdot \varphi dxdt \right|\leqslant  |E|$. 
Taking  $\varphi=\frac{s_m}{|s_m|}$, we have 
$$
\iint_{E}|s_m| dxdt \leqslant  |E| , 
$$
which implies $\left\|s_m\right\|_{\infty}  \leqslant 1$ due to the arbitrariness of $E$. For any $m>0$, $\mathbf{z}$  can be decomposed into $r_m+s_m$ with $\left\|s_m\right\|_{\infty} \leqslant 1$ and $r_m$ satisfies \eqref{eq:rm}. Then  
\begin{equation*}
    \left|\iint_{E}\mathbf{z} \cdot \varphi dxdt \right|\leqslant  
    \left|\iint_{E}s_m \cdot \varphi dxdt \right| + \left|\iint_{E}r_m \cdot \varphi dxdt \right|\leqslant  |E| + \frac{M}{m}, \quad \text { for any } m>0. 
\end{equation*}
Letting $m\rightarrow \infty$, we conclude from the arbitrariness of $E$ that  
$\|\mathbf{z}\|_{\infty}  \leqslant 1$.

Similarly we have $\|h\|_\infty \leqslant 1$.
Let $B_{p, m}=\{(x,y, t) \in \Omega \times \Omega \times (0,T):\left|u_p(y,t)-u_p(x,t)\right|>m\}$ for any $m>0$ and $1<p<2$. Then 
\begin{equation*}
     m^p \iiint_{B_{p, m}}  J(x-y) dydxdt \leqslant \iiint_{B_{p, m}}J(x-y) |u_p(y,t)-u_p(x,t)|^{p} dydxdt \leqslant M.
\end{equation*}
Similar to the proof of $\|\mathbf{z}\|_\infty \leqslant 1$, there exists functions $r_m$, $s_m \in L^1\left(\Omega \times \Omega \times (0,T)\right)$ such that
\begin{gather*}
J(x-y)\left|u_p(y,t)-u_p(x,t) \right|^{p-2}\left(u_p(y,t)-u_p(x,t)\right) \chi_{B_{p, m}} \rightharpoonup  J(x-y) r_m\\
J(x-y)\left|u_p(y,t)-u_p(x,t)\right|^{p-2}(u_p(y,t)-u_p(x,t))  \chi_{Q_T /B_{p, m}} \rightharpoonup  J(x-y)s_m
\end{gather*}
hold in $L^1\left(\Omega \times \Omega \times (0,T)\right)$, 
where $\chi_{B_{p, m}}$ is the indicator of $B_{p, m}$. Besides, the $r_m$ and $s_m$ satisfy
\begin{equation}\label{eq:rm_nonlocal}
    \int_0^T \int_{\Omega}\int_{\Omega}\left|J(x-y)r_m\right| dy d x d t \leqslant \frac{M}{ m}, \quad \text { for any } m>0,  
\end{equation}
and 
\begin{equation}\label{eq:sm_inequal}
\iiint_{G} J(x-y)\left|s_m\right| dydxdt 
    \leqslant  \iiint_{G}J(x-y) dydxdt, 
\end{equation}
for any measurable set $G \subset \Omega \times \Omega \times (0,T)$. Denote 
$$
U := \{(x,y): \ x-y\in \text{supp}J, \ x \in \Omega, \ y \in \Omega\}.   
$$
$\left\|s_m\right\|_{L^\infty(U \times (0,T))} \leqslant 1$ follows from 
\eqref{eq:sm_inequal} and the arbitrariness of $G$.  
For any $m>0$, $h$  can be decomposed into $r_m+s_m$ with $\left\|s_m\right\|_{\infty} \leqslant 1$ and $r_m$ satisfies \eqref{eq:rm_nonlocal}. Then  
\begin{equation*}
    \begin{aligned}
    \left|\iiint_{G}J(x-y) h   dydxdt \right|&\leqslant  
    \left|\iiint_{G}J(x-y)s_m  dydxdt \right| + \left|\iiint_{G}J(x-y)r_m   dydxdt \right|\\
    &\leqslant  \iiint_{G} J(x-y) dydxdt + \frac{M}{m}. 
    \end{aligned}
\end{equation*}
Letting $m\rightarrow \infty$, we conclude from the arbitrariness of $G$ that $\left\|h\right\|_{L^\infty(U \times (0,T))} \leqslant 1$. 
After setting $h=0$ out of $U$, $\left\|h\right\|_\infty \leqslant 1$ follows.

Before illustrating \eqref{eq:entro_con_3}, some convergences need to be considered. 
By the convergence of initial values in \eqref{eq:initial_conve}, we have 
$ \lim_{p \rightarrow 1^{+}} \left.\int_{\Omega} (u-u_p)^2 dx \right|_0^T \geqslant 0
$. 
Combining \eqref{eq:l2_weak} and \eqref{eq:L1_strong}, we discover 
\begin{equation}\label{eq:partial_u_p_t}
    \begin{aligned}
    \lim _{p \rightarrow 1^+} -\int_0^{T}\int_{\Omega} \frac{\partial u_p}{\partial t} u_p d x dt&= \int_0^{T}\int_{\Omega} \frac{\partial u}{\partial t} (u-u_p) dx dt - \int_0^{T}\int_{\Omega} \frac{\partial u_p}{\partial t} udxdt \\
    &\quad - \frac{1}{2} \left.\int_{\Omega} (u-u_p)^2 dx \right|_0^T
    \leqslant - \int_0^{T}\int_{\Omega} \frac{\partial u}{\partial t} udxdt. 
    \end{aligned}
\end{equation}
Besides, 
appling weighted h{\"o}lder's inequality, Fatou's Lemma, and having in mind \eqref{eq:L1_strong},   we have
\begin{equation}\label{other_hand_result}
    \begin{aligned}
    &\int_0^{T} \int_{\Omega} \int_{\Omega} \frac{g(x)+g(y)}{2} J(x-y) |u(y,t)-u(x,t)|dyd x d t \\
    &\leqslant
    \varliminf_{p \rightarrow 1^{+}}
    \int_0^{T} \int_{\Omega}\int_{\Omega} \frac{g(x)+g(y)}{2} J(x-y)\left|u_{p}(y,t)-u_{p}(x,t)\right|^{p} d y d x dt.  
    \end{aligned}
\end{equation}

According to the density of $C^{\infty}(\overline{Q_T}) $ in $C\left([0, T] ; L^2(\Omega)\right) \cap L^p\left((0, T) ; W^{1, p}(\Omega) \right)$, 
for any $\varphi \in C^{\infty}(\overline{Q_T})$ with $\varphi(x, 0)=\varphi(x, T)=0$, if we take  $\varphi=\left(u_p-\phi\right) \eta(t)$ with $\eta(t) \geqslant 0$ as the test function in \eqref{weak_P}, it follows that 
\begin{equation*}
    \begin{aligned}
    &\int_0^{T}\int_{\Omega} \frac{\partial u_p}{\partial t} \left(u_p-\phi\right) \eta(t) d x\\
    &=-\int_0^{T} \int_{\Omega} (1-g)|\nabla u_p|^{p-2} \nabla u_p \cdot \nabla \left(u_p-\phi \right) \eta(t) d x d t \\
    &\quad -\frac{\lambda}{2} \int_0^{T} \int_{\Omega} \int_{\Omega} \frac{g(x)+g(y)}{2} J(x-y)| u_p(y,t)-u_p(x,t)|^{p} \eta(t) d y d x d t\\
    &\quad +\frac{\lambda}{2} \int_0^{T} \int_{\Omega} \int_{\Omega} \frac{g(x)+g(y)}{2} J(x-y)|u_p(y,t)-u_p(x,t)|^{p-2} \times \\
    &\quad \quad \quad \quad \quad \left(u_p(y,t)-u_p(x,t) \right) \left(\phi (y,t)-\phi (x,t)\right) \eta(t) d y d x d t. 
    \end{aligned}
\end{equation*}
Take the lower limit for $p \rightarrow 1^+$, we have 
\begin{equation}\label{eq:entropy_re}
    \begin{aligned}
    &\int_0^{T}\int_{\Omega} \frac{\partial u}{\partial t} \left(u-\phi \right) \eta(t) d x\\
    &\leqslant
    \int_0^{T} \int_{\Omega} (1-g)\mathbf{z} \cdot \nabla \phi  \eta(t) dxdt 
    -\int_0^{T} \int_{\Omega}(1-g) |Du| \eta(t)  d t \\
    &\quad -\frac{\lambda}{2} \int_0^{T} \int_{\Omega} \int_{\Omega} \frac{g(x)+g(y)}{2} J(x-y) \left|u(y,t)-u(x,t)\right|\eta(t) d y d x d t\\
    &\quad +\frac{\lambda}{2} \int_0^{T} \int_{\Omega} \int_{\Omega} \frac{g(x)+g(y)}{2} J(x-y)h(x,y,t) 
    \left(\phi (y,t)-\phi (x,t) \right) \eta(t) d y d x d t.
    \end{aligned}
\end{equation} 
Due to the density of $C^{\infty}(\overline{Q_T}) $ in $L^2(0,T; W^{1,1}(\Omega) \cap L^2(\Omega))$, 
\eqref{eq:entropy_re} valid for any $\phi \in L^2(0,T; W^{1,1}(\Omega) \cap L^2(\Omega))$. 

Now, let $\phi \in L^\infty(0,T;  B V(\Omega) \cap L^2(\Omega))$, then $\phi \in L^2(0,T;  B V(\Omega) \cap L^2(\Omega))$, and according to the Theorem B.3 and Lemma C.8 in \cite{iindex}, there exists a sequence $\{\phi_n\} \subset L^2(0,T; B V(\Omega) \cap L^2(\Omega)) \cap C^{\infty}(Q_T) $ such that
\begin{gather*}
    \phi_n \rightarrow \phi,  \text { in } L^2(0,T;L^2(\Omega)), \quad
    \phi_n \rightarrow \phi,  \text { in } L^2(0,T;L^2(\partial \Omega, \mathcal{H}^{N-1})),\\
    \int_{0}^{T}\int_{\Omega}(1-g)\left|\nabla \phi_n\right| d xdt \rightarrow \int_{0}^{T}\int_{\Omega}(1-g)|D \phi| dt.
    \end{gather*}
In \eqref{eq:entro_con_2}, due to the continuity of $g$ and $J$, and the boundedness of $h$, we conclude that $\operatorname{div}((1-g)\mathbf{z} ) \in L^2 (Q_T)$. 
Then  using Green's formula, it follows that 
\begin{equation*}
    \int_{0}^{T}\int_{\Omega} (1-g)\mathbf{z} \cdot \nabla \phi_n \eta(t) d x dt
    \rightarrow \int_{0}^{T}\int_{\Omega}((1-g)\mathbf{z} , D \phi) \eta(t) dt. 
\end{equation*}
We use $\phi_n$ as test function in  \eqref{eq:entropy_re} and let $n \rightarrow \infty$ to obtain 
\begin{equation*}
    \begin{aligned}
    &\int_0^{T}\int_{\Omega} \frac{\partial u}{\partial t} \left(u-\phi \right) \eta(t) d x\\
    &\leqslant
    \int_{0}^{T}\int_{\Omega}((1-g)\mathbf{z} , D \phi) \eta(t) dt 
    -\int_0^{T} \int_{\Omega}(1-g) |Du| \eta(t)  d t \\
    &\quad -\frac{\lambda}{2} \int_0^{T} \int_{\Omega} \int_{\Omega} \frac{g(x)+g(y)}{2} J(x-y) \left|u(y,t)-u(x,t)\right|\eta(t) d y d x d t\\
    &\quad +\frac{\lambda}{2} \int_0^{T} \int_{\Omega} \int_{\Omega} \frac{g(x)+g(y)}{2} J(x-y)h(x,y,t) 
    \left(\phi (y,t)-\phi (x,t) \right) \eta(t) d y d x d t, 
    \end{aligned}
\end{equation*} 
for any $\phi \in L^\infty(0,T;  B V(\Omega) \cap L^2(\Omega))$. 
From the arbitrariness of $\eta(t)$, \eqref{eq:entro_con_3} follows. 

Finally, we prove the uniqueness of the weak solution. Let $u_1$, $u_2$ are weak solutions to \eqref{eq:tv_case} with initial values $f_{1}$, $f_{2}$. Then there exist $h_1$, $h_2 \in L^{\infty}\left( \Omega \times \Omega \times (0,T)\right)$ and $\mathbf{z}_1$,  $\mathbf{z}_2 \in X_2 \left(Q_T\right)$ such that
\begin{equation}\label{eq:unique_1}
\begin{aligned}
    \int_{\Omega}(u_1-\phi) \frac{\partial u_1}{\partial t} dx
    \leqslant 
    &\frac{\lambda}{2} \int_{\Omega} \int_{\Omega} \frac{g(x)+g(y)}{2} J(x-y) h_1(x,y,t)\left(\phi(y,t)-\phi(x,t) \right)  d y d x \\
    &-\frac{\lambda}{2} \int_{\Omega} \int_{\Omega} \frac{g(x)+g(y)}{2} J(x-y) \left|u_1(y,t)-u_1(x,t)\right| d y d x \\
    &+\int_{\Omega} \left((1-g)\mathbf{z}_1, D \phi\right)  
    -\int_{\Omega}(1-g) |Du_1|, \quad  \text{ a.e. on } [0, T] ,
    \end{aligned}
\end{equation}
and 
\begin{equation}\label{eq:unique_2}
\begin{aligned}
    \int_{\Omega}(u_2-\phi) \frac{\partial u_2}{\partial t} dx
    \leqslant 
    &\frac{\lambda}{2} \int_{\Omega} \int_{\Omega} \frac{g(x)+g(y)}{2} J(x-y) h_2(x,y,t)\left(\phi(y,t)-\phi(x,t) \right)  d y d x \\
    &-\frac{\lambda}{2} \int_{\Omega} \int_{\Omega} \frac{g(x)+g(y)}{2} J(x-y) \left|u_2(y,t)-u_2(x,t)\right| d y d x \\
    &+\int_{\Omega} \left((1-g)\mathbf{z}_2, D\phi \right) 
    -\int_{\Omega}(1-g) |Du_2|, \quad  \text{ a.e. on } [0, T] ,
    \end{aligned}
\end{equation}
for any $\phi \in L^\infty(0,T;  B V(\Omega) \cap L^2(\Omega))$. Taking test function  $\phi=u_{2}$ in \eqref{eq:unique_1}, $\phi=u_{1}$ in \eqref{eq:unique_2} and adding these two inequalities, we obtain 
\begin{equation*}
    \begin{aligned}
    &\int_{\Omega}\left(u_1-u_2\right)\left(\frac{\partial u_1}{\partial t}-\frac{\partial u_2}{\partial t}\right) dx \\
    &\leqslant  \int_{\Omega} \left((1-g)\mathbf{z}_1, D u_{2} \right)
    -\int_{\Omega}(1-g) |Du_2|
    + \int_{\Omega} \left((1-g)\mathbf{z}_2, D u_{1} \right)
    -\int_{\Omega}(1-g) |Du_1|\\
    &\quad +\frac{\lambda}{2} \int_{\Omega} \int_{\Omega} \frac{g(x)+g(y)}{2} J(x-y) \times\\
    &\quad\quad\quad\quad\quad\quad\quad\quad\quad\quad\left(h_1(x,y,t)(u_{2}(y,t) - u_{2}(x,t))-\left|u_2(y,t) - u_2(x,t) \right|   \right)d y d x \\
    &\quad +\frac{\lambda}{2} \int_{\Omega} \int_{\Omega} \frac{g(x)+g(y)}{2}J(x-y) \times \\
    &\quad\quad\quad\quad\quad\quad \quad\quad\quad\quad\left(h_2(x,y,t)(u_{1}(y,t) - u_{1}(x,t))-\left| u_1(y,t) - u_1(x,t) \right|   \right)d y d x \\
    &\leqslant 0,
    \end{aligned}
\end{equation*}
according to $\|\mathbf{z}\|_\infty \leqslant 1$ and $\|h\|_\infty \leqslant 1$. 
Integrating it from 0 to $t$ and letting $n \rightarrow \infty$, it follows that 
\begin{equation*}
    \int_{\Omega}\left(u_1-u_2\right)^2 d x \leqslant \int_{\Omega}\left(f_{1}-f_{2}\right)^2 dx.
\end{equation*}

\subsection{Proof of propositions}

\begin{proof}[\textbf{Proof of Proposition \ref{prop:equivalent}}]

    To prove (b) from (a), multiply $(u-\phi)\eta(t)$ to the both sides of \eqref{eq:entro_con_2} with $\phi \in L^\infty(0,T;  B V(\Omega) \cap L^2(\Omega))$, $\eta(t) \in C[0,T]$ and $\eta(t) \geqslant0$. Then using Green's formula is enough. In addition, it is clearly that (b) implies \eqref{eq:entro_con_3}.

    The only thing we need to prove is that (a) follows from \eqref{eq:entro_con_3}. Choosing $\phi=u$ in \eqref{eq:entro_con_3}, we have 
    \begin{equation*}
        \begin{aligned}
            &\frac{\lambda}{2} \int_{\Omega} \int_{\Omega} \frac{g(x)+g(y)}{2}  J(x-y) \left| u(y,t)-u(x,t)\right|   d y d x + \int_{\Omega}(1-g)|D u| \\
            &\leqslant \frac{\lambda}{2} \int_{\Omega} \int_{\Omega} \frac{g(x)+g(y)}{2}  J(x-y) h(x,y,t)\left(u(y,t)-u(x,t)\right)  d y d x + \int_{\Omega}((1-g)\mathbf{z}, D u).
            \end{aligned}
    \end{equation*}
    Combining the fact that  $\|\mathbf{z}\|_\infty  \leqslant 1$ and  $\|h\|_\infty  \leqslant 1$, we have 
    \begin{equation*}
        \begin{aligned}
            &\int_{\Omega} \int_{\Omega} \frac{g(x)+g(y)}{2}  J(x-y) h(x,y,t)\left(u(y,t)-u(x,t)\right)  d y d x \\
            &\quad \leqslant \|h\|_\infty  \int_{\Omega} \int_{\Omega} \frac{g(x)+g(y)}{2}  J(x-y) \left| u(y,t)-u(x,t)\right|   d y d x \\
            &\quad \leqslant \int_{\Omega} \int_{\Omega} \frac{g(x)+g(y)}{2}  J(x-y) \left| u(y,t)-u(x,t)\right|   d y d x, 
            \end{aligned}
    \end{equation*}
    and 
    $$
    \int_{\Omega}((1-g)\mathbf{z}, D u) \leqslant \|\mathbf{z}\|_\infty \int_{\Omega}(1-g)|D u| \leqslant \int_{\Omega}(1-g)|D u|,
    $$
    which imply \eqref{eq:equil_1} and \eqref{eq:equil_1_1}.
    Finally, we select $\phi =u \pm \varphi$ for $\varphi \in C^{\infty}(\overline{Q_T})$ and $\varphi \geqslant 0$ in \eqref{eq:entro_con_3}, and use Green's formula to obtain 
    \begin{equation*} 
        \begin{aligned}
            \mp  \int_{\Omega} \frac{\partial u}{\partial t}  \varphi d x 
            &\leqslant 
            \pm \frac{\lambda}{2} \int_{\Omega} \int_{\Omega} \frac{g(x)+g(y)}{2}  J(x-y) h(x,y,t)\left(\varphi(y,t)-\varphi(x,t)\right)d y d x  \\
            &\quad \mp \int_{\Omega} \operatorname{div}\left( (1-g)\mathbf{z}\right) \varphi dx 
            \pm \int_{\partial \Omega} (1-g)[\mathbf{z},\nu] \varphi d \mathcal{H}^{N-1}, 
            \quad  \text{ a.e. on } [0, T] ,
            \end{aligned}    
    \end{equation*}
    which implies \eqref{eq:equil_2}.
    \end{proof}

\begin{proof}[\textbf{Proof of Proposition \ref{prop:Jg}}]
    Firstly, the following formula has been illustrated in Theorem \ref{th:exit_unique}.
    \begin{equation*}
        \begin{aligned}
        &\int_0^{T} \int_{\Omega} \int_{\Omega} \frac{g(x)+g(y)}{2} J(x-y) |u(y,t)-u(x,t)|dyd x d t \\
        &\leqslant
        \varliminf_{p \rightarrow 1^{+}}
        \int_0^{T} \int_{\Omega}\int_{\Omega} \frac{g(x)+g(y)}{2} J(x-y)\left|u_{p}(y,t)-u_{p}(x,t)\right|^{p} d y d x dt .
        \end{aligned}
    \end{equation*}
    In this proof, we illustrate 
    \begin{equation}\label{eq:suplim_u_p}
        \begin{aligned}
        &\varlimsup_{p \rightarrow 1^{+}}
        \int_0^{T} \int_{\Omega}\int_{\Omega} \frac{g(x)+g(y)}{2} J(x-y)\left|u_{p}(y,t)-u_{p}(x,t)\right|^{p} d y d x dt \\
        &\leqslant \int_0^{T} \int_{\Omega} \int_{\Omega} \frac{g(x)+g(y)}{2} J(x-y) |u(y,t)-u(x,t)|dyd x d t .
        \end{aligned}
    \end{equation}

    We first give some estimates of the local part. 
    Observe $(1-g)\mathbf{z} \in X_2(\Omega)$ and $u \in BV(\Omega)\cap  L^2(\Omega)$, using Green's formula and the equivalent form (a) of the weak solution to \eqref{eq:tv_case}, that 
    \begin{equation*}
    - \int_{\Omega}u \operatorname{div}((1-g)\mathbf{z})  d x = \int_{\Omega} ((1-g)\mathbf{z},Du) 
    =\int_{\Omega} (1-g)|D u|.
    \end{equation*}
    Then employing H{\"o}lder's inequality and the lower semicontinuity in Lemma \ref{weak_continuous_u}, we have 
    \begin{equation}\label{eq:green_f}
        \begin{aligned}
        \varlimsup_{p \rightarrow 1^{+}} \int_0^{T} \int_{\Omega} (1-g)|\nabla u_p|^p dxdt 
        &\geqslant \varliminf_{p \rightarrow 1^{+}} \int_0^{T} \int_{\Omega} (1-g) |\nabla u_p| dxdt \\
        &\geqslant \int_0^{T} \int_{\Omega} (1-g)|Du|dt 
        = -\int_0^{T}   \int_{\Omega}u \operatorname{div}((1-g)\mathbf{z})  d xdt.
        \end{aligned}
    \end{equation}

    Now, let us choose $\varphi = u_p$ in \eqref{weak_P}, using the nonlocal integrate by parts formula, taking upper limit, and combining  \eqref{eq:partial_u_p_t} and \eqref{eq:green_f}, to obtain 
    \begin{equation*}
        \begin{aligned}
        &\varlimsup_{p \rightarrow 1^{+}} \frac{\lambda}{2} \int_0^{T} \int_{\Omega}\int_{\Omega} \frac{g(x)+g(y)}{2} J(x-y)\left|u_{p}(y,t)-u_{p}(x,t)\right|^{p} d y d x dt \\
        &\leqslant- \int_0^{T}\int_{\Omega} \frac{\partial u}{\partial t} udxdt 
        +\int_0^{T} \int_{\Omega}u\operatorname{div}((1-g)\mathbf{z})  d x d t. 
        \end{aligned}
    \end{equation*}
    At the same time, multipling $u$ on the both sides of  \eqref{eq:entro_con_2}, we have 
    \begin{equation*}
        \begin{aligned}
        &\int_0^{T}\int_{\Omega} \frac{\partial u}{\partial t} u d x dt -\int_0^{T} \int_{\Omega}u \operatorname{div}((1-g)\mathbf{z}) d x d t \\
        & =\lambda \int_0^{T} \int_{\Omega} \int_{\Omega} \frac{g(x)+g(y)}{2}  J(x-y) h(x, y,t) d y u(x,t)  d x d t. 
        \end{aligned}
    \end{equation*}
Then 
    \begin{equation*}
        \begin{aligned}
        &\varlimsup_{p \rightarrow 1^{+}} \frac{1}{2} \int_0^{T} \int_{\Omega}\int_{\Omega} \frac{g(x)+g(y)}{2} J(x-y)\left|u_{p}(y,t)-u_{p}(x,t)\right|^{p} d y d x dt \\
        &\leqslant-\int_0^{T} \int_{\Omega} \int_{\Omega}\frac{g(x)+g(y)}{2} J(x-y) h(x, y,t) d y u(x,t) d x dt,
        \end{aligned}
    \end{equation*}    
which coincide with \eqref{eq:suplim_u_p} because of \eqref{eq:equil_1}. 
    \end{proof}

\begin{proof}[\textbf{Proof of Proposition \ref{prop:1}}]
Let
$$
K:=\esssup_{x \in \Omega}\left(f(x)\right)_{+},
$$
where $s_+ = \max\{ s,0 \}$, 
and suppose $u$ is the weak solution to \eqref{eq:tv_case} with initial value $f$. 
Multipling $\varphi=(u-K)_{+} $ to the both sides of \eqref{eq:entro_con_2}, integrating on $\Omega$, using Green's formula and the equivalent form (a), we have 
$$
\begin{aligned}
\int_{\Omega} \frac{d}{d t}(u-K)_{+}^2 d x &=\int_{\Omega} \operatorname{div}((1-g)\mathbf{z})(u-K)_{+} d x   - \frac{\lambda}{2} \int_{\Omega} \int_{\Omega} \frac{g(x)+g(y)}{2} J(x-y) h(x, y,t) \times\\
& \quad \quad\quad\quad\quad\quad\quad\quad\quad\quad\quad\quad\left((u-K)_{+}(y,t) - (u-K)_{+}(x,t) \right)d y dx \\
&=- \int_{\Omega \cap \{ u > K \}}(1-g)|Du| + \int_{\partial \Omega}(1-g)[\mathbf{z} , \nu] (u-K)_+ d \mathcal{H}^{N-1}\\
& \quad - \frac{\lambda}{2} \int_{\Omega \cap \{ x:u > K \}} \int_{\Omega \cap \{ y: u > K \}} \frac{g(x)+g(y)}{2} J(x-y)  |u(y,t)-u(x,t)|d y dx \\
& \quad - \lambda \int_{\Omega \cap \{ x:u \leqslant K \}} \int_{\Omega \cap \{ y: u > K \}} \frac{g(x)+g(y)}{2} J(x-y) \times\\
&\quad \quad\quad\quad\quad\quad\quad\quad\quad\quad\quad\quad\quad\quad\quad\quad\quad\quad\quad\quad h(x,y,t) (u(y,t)-K)d y dx \\
&\leqslant 0,
\end{aligned}
$$
which implies $u(x, t)-K \leqslant 0$, a.e. for $t \in [0,T]$. 
Similarly, let
$$
L:=- \esssup_{x \in \Omega}\left(-f(x)\right)_{+},
$$
we have  $u(x, t)-L \geqslant 0$, a.e. for $t \in [0,T]$. 

\end{proof}

\begin{proof}[\textbf{Proof of Proposition \ref{prop:3}}]
    Set $\phi = 0 $ in \eqref{eq:entro_con_3}, integrate over $(0,T)$ and let $T \rightarrow \infty$, to obtain 
$$
\int_0^{\infty}\int_{\Omega}(1-g)\left|D u \right| d t \leqslant \frac{1}{2} \int_{\Omega} f^2 dx,  
$$
according to $\|u \|_{L^\infty( \Omega)} \leqslant \|f\|_{L^\infty( \Omega)}$.
Hence, there exists a subsequence $\{t_n\} \rightarrow \infty$ such that $\int_{\Omega} (1-g)|Du(x,t_n)| \rightarrow 0$ as $n \rightarrow \infty$.

Since the conservation of mass of $u$, we deduce 
$$
\|u-\overline{f}\|_{L^2(\Omega)}=\|u -\overline{u }\|_{L^2(\Omega)} \leqslant M \int_\Omega |D u| \leqslant M \int_\Omega (1-g )|D u | , 
$$
by Poincaré's inequality \cite{evans2018measure} when $N=2$.
Similar to the proof of Prop. \ref{prop:1}, it is easily proven that $\int_{\Omega}|u-\overline{f}|^2 d x$ decreases with respect to $t$.
Thus, we have 
\begin{equation*}
   t \left(\int_{\Omega}\left|u-\overline{f}\right|^2 d x\right)^\frac{1}{2} \leqslant \int_0^t \left(\int_{\Omega}\left|u-\overline{f}\right|^2 d x\right)^\frac{1}{2} d s  
   \leqslant M \int_{0}^{\infty} \int_{\Omega} (1-g )|D u | ds 
   \leqslant \frac{M}{2}\int_{\Omega}|f|^2 dx, 
    \end{equation*}
concluding the proof. 
\end{proof}

\subsection{Proof of Theorem \ref{th:nl2local}}
we now
prove the convergence of our model to the local problem. The following lemma 
(see \cite{andreu2008nonlocal}, Proposition 3.2) will be useful for the proof. 
\begin{lemma}\label{lm:lm_ep}
Let $\rho: \mathbb{R}^N \rightarrow \mathbb{R}$ be a nonnegative continuous radial function with compact support, non-identically zero, and $\rho_n(x):=n^N \rho(n x)$. Let $\left\{f_n\right\}$ be a sequence of functions in $L^1(\Omega)$ such that
$$
\int_{\Omega} \int_{\Omega}\left|f_n(y)-f_n(x)\right| \rho_n(y-x) d x d y \leqslant \frac{M}{n}.
$$
If $\left\{f_n\right\}$ is weakly convergent in $L^1(\Omega)$ to $f$, then $ f \in B V(\Omega)$, and 
$$
\rho(z) \chi_{\Omega}\left(x+\frac{1}{n} z\right) \frac{f_n\left(x+\frac{1}{n} z\right)-f_n(x)}{1 / n} \rightharpoonup \rho(z) z \cdot D f,
$$
weakly as measures.
Moreover, if $\rho(x) \geqslant \rho(y)$ for $|x| \leqslant|y|$, then there exists a subsequence $\left\{f_{n_k}\right\}$ such that
$ f_{n_k} \rightarrow f$ in $L^1(\Omega)$ with $f \in B V(\Omega)$.
\end{lemma}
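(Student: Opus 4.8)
The plan is to prove the two assertions of Lemma \ref{lm:lm_ep} separately, both after the rescaling $z=n(y-x)$, which converts the hypothesis into a uniform mass bound on a family of measures. Writing $y=x+\tfrac1n z$, so that $\rho_n(y-x)\,dy=\rho(z)\,dz$, the assumption becomes
\[
E_n:=\int_{\Omega}\int_{\mathbb{R}^N}\rho(z)\,\chi_{\Omega}\!\left(x+\tfrac1n z\right)\frac{\bigl|f_n(x+\tfrac1n z)-f_n(x)\bigr|}{1/n}\,dz\,dx\le M .
\]
Hence the scalar measures $d\mu_n:=\rho(z)\,\chi_{\Omega}(x+\tfrac1n z)\,\tfrac{f_n(x+z/n)-f_n(x)}{1/n}\,dx\,dz$, all carried in the $x$-variable by $\overline\Omega$ and in $z$ by $\operatorname{supp}\rho$, have total variation at most $M$. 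I would also record at the outset that $\{f_n\}$ is bounded in $L^1(\Omega)$ and uniformly integrable, since a weakly convergent sequence in $L^1$ is relatively weakly compact (Dunford--Pettis); this will be needed in the second part.

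For the weak-measure convergence and the membership $f\in BV(\Omega)$, I would extract a weak-$*$ limit $\mu$ of $\{\mu_n\}$ (bounded mass on a fixed compact set) and identify it. Testing against $\phi\in C_c^\infty(\Omega\times\mathbb{R}^N)$ and changing variables $x\mapsto x-\tfrac1n z$ in the translated term gives, for $n$ large enough that the indicator equals $1$ on the support,
\[
\langle \mu_n,\phi\rangle=\int_{\mathbb{R}^N}\!\int_{\Omega}\rho(z)\,f_n(x)\,n\Bigl[\phi\bigl(x-\tfrac1n z,z\bigr)-\phi(x,z)\Bigr]\,dx\,dz .
\]
Since $n[\phi(x-\tfrac1n z,z)-\phi(x,z)]\to -\,z\cdot\nabla_x\phi(x,z)$ uniformly on the support and $f_n\rightharpoonup f$ in $L^1$, the right-hand side converges to $-\int_{\mathbb{R}^N}\int_\Omega \rho(z)f(x)\,z\cdot\nabla_x\phi\,dx\,dz$, which is exactly $\langle\rho(z)\,z\cdot Df,\phi\rangle$ in the distributional sense; as this limit does not depend on the subsequence, the whole sequence converges. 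To deduce $f\in BV$ I would then test with $\phi(x,z)=\xi(x)\cdot z$ (cut off in $z$, which is harmless because $\mu_n$ lives on $\operatorname{supp}\rho$) for $\xi\in C_c^1(\Omega;\mathbb{R}^N)$ with $|\xi|\le1$; the second-moment identity $\int\rho(z)z_iz_j\,dz=\kappa\,\delta_{ij}$, with $\kappa=\tfrac1N\int\rho(z)|z|^2\,dz>0$, yields $\langle\mu,\phi\rangle=-\kappa\int_\Omega f\,\operatorname{div}\xi\,dx$, while $|\langle\mu,\phi\rangle|\le\|\mu\|\,\sup|\phi|\le MR$ with $R$ the radius of $\operatorname{supp}\rho$. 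Hence $\int_\Omega f\,\operatorname{div}\xi\le MR/\kappa$ for all such $\xi$, so $f\in BV(\Omega)$ with $|Df|(\Omega)\le MR/\kappa$.

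For the strong $L^1$ compactness under the extra hypothesis that $\rho$ is radially nonincreasing, I would invoke the Riesz--Fréchet--Kolmogorov criterion. Uniform integrability, hence control of $\|f_n\|_{L^1(\Omega\setminus\Omega')}$ as $\Omega'\uparrow\Omega$, is already available from the first part, so the only point to verify is equicontinuity in the mean,
\[
\sup_n\int_{\Omega'}\bigl|f_n(x+h)-f_n(x)\bigr|\,dx\longrightarrow 0\quad\text{as }h\to0,\qquad \Omega'\Subset\Omega .
\]
I would obtain this by a telescoping (chaining) argument: a macroscopic translation by $h$ is written as $\approx n|h|$ successive microscopic shifts of size $O(1/n)$, each elementary shift being controlled, on average over directions, by the energy. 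Here radial monotonicity is used so that $\rho\ge\alpha_0>0$ on some ball $B_{r_0}$, whence $\int_{\Omega'}\int_{B_{r_0}}|f_n(x+\tfrac1n z)-f_n(x)|\,dz\,dx\le\alpha_0^{-1}E_n\le M/(\alpha_0 n)$ for $n$ large. Summing the $O(n|h|)$ steps against the $O(M/n)$ per-step bound gives $\sup_n\|f_n(\cdot+h)-f_n\|_{L^1(\Omega')}\le CM|h|\to0$. Riesz--Fréchet--Kolmogorov then yields relative compactness in $L^1(\Omega)$, and any strong limit of a subsequence must coincide with the weak limit $f$, which lies in $BV(\Omega)$ by the first part.

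The main obstacle is precisely this translation estimate: the energy controls difference quotients only at the single microscopic scale $1/n$ and only in a direction-averaged sense, whereas compactness demands uniform control of genuine macroscopic translations by an arbitrary small $h$. Reconciling the two scales is exactly where the monotone radial profile is indispensable, both to dominate a single-direction shift by the spherical average and to make the chaining produce the clean linear factor $|h|$; keeping track of the boundary indicators $\chi_\Omega(x+\tfrac1n z)$ and of the interior shrinkage $\Omega'\Subset\Omega$ is a secondary technical nuisance, absorbed by the uniform integrability supplied by weak $L^1$ convergence.
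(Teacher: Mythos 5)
Your first half is correct, and it is essentially the standard argument behind Proposition 3.2 of \cite{andreu2008nonlocal} (which the paper cites rather than proves): the rescaling $z=n(y-x)$ turning the hypothesis into the uniform mass bound $E_n\leqslant M$, the identification of the weak-$*$ limit by testing against $\phi\in C_c^\infty(\Omega\times\mathbb{R}^N)$ (the indicator $\chi_\Omega(x+\tfrac1n z)$ indeed disappears for large $n$ on compact supports, and pairing a weakly convergent $L^1$ sequence with uniformly convergent difference quotients is legitimate), and the deduction $f\in BV(\Omega)$ via $\phi(x,z)=\xi(x)\cdot z$ and the moment identity $\int\rho(z)z_iz_j\,dz=\kappa\,\delta_{ij}$, giving $|Df|(\Omega)\leqslant MR/\kappa$, are all sound.

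The genuine gap is in the compactness half, at exactly the step you flag but do not resolve. Your per-step estimate $\int_{\Omega'}\int_{B_{r_0}}|f_n(x+\tfrac1n z)-f_n(x)|\,dz\,dx\leqslant M/(\alpha_0 n)$ controls only the \emph{average over $z\in B_{r_0}$} of the translation errors; it gives no bound for any single fixed increment. The telescoping $f_n(x+h)-f_n(x)=\sum_{j}\bigl[f_n(x+(j+1)h/k)-f_n(x+jh/k)\bigr]$ needs the same fixed step $h/k$ at every stage, and a fixed-direction shift is \emph{not} ``dominated by the spherical average'' in any pointwise sense: for each $n$ there may be an exceptional set of directions $z$ on which $\|f_n(\cdot+\tfrac1n z)-f_n\|_{L^1}$ is huge while the average stays $O(M/n)$, so the asserted bound $\sup_n\|f_n(\cdot+h)-f_n\|_{L^1(\Omega')}\leqslant CM|h|$ does not follow as written. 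Two standard repairs: (i) Ponce-type randomized chaining, which propagates the averaged estimate by induction to averages of translations over balls centred at $h$, and only then compares the fixed shift with such an average, yielding $\|f_n(\cdot+h)-f_n\|_{L^1(\Omega')}\leqslant CM(|h|+\tfrac1n)$ — still enough for an asymptotic Riesz--Fr\'echet--Kolmogorov argument; or (ii), simpler and closer to the cited source: use monotonicity only to secure $\rho\geqslant\alpha_0>0$ on $B_{r_0}$, take a smooth mollifier $\theta$ supported in $B_{r_0}$ with $\theta_n\leqslant C\rho_n$ and $|\nabla\theta_n|\leqslant Cn\rho_n$; then $\|f_n-f_n*\theta_n\|_{L^1(\Omega')}\leqslant CM/n$, and since $\int\nabla\theta_n=0$ one gets $\|\nabla(f_n*\theta_n)\|_{L^1(\Omega')}\leqslant Cn\cdot M/n=CM$, so $\{f_n*\theta_n\}$ is relatively compact in $L^1_{\mathrm{loc}}$ by the $W^{1,1}$ compact embedding and $\{f_n\}$ inherits this compactness, the boundary layer being absorbed by the Dunford--Pettis uniform integrability you already recorded. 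Either mechanism must replace your telescoping sentence for the proof to close.
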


\begin{proof}[\textbf{Proof of Theorem \ref{th:nl2local}}]
Given $\varepsilon >0$, the weak solution to \eqref{eq:TV_nltv_ep} is denoted as $u_\varepsilon$. Then  
there exist $\mathbf{z}_\varepsilon \in  X_2(Q_T)$ with $\|\mathbf{z}_\varepsilon\|_\infty \leqslant 1$ and  $h_\varepsilon \in L^{\infty}\left(\Omega \times \Omega \times (0,T) \right)$  with $\|h_\varepsilon\|_{\infty} \leqslant 1$ such that $h_\varepsilon( x, y,t)=-h_\varepsilon( y, x,t)$,   
\begin{equation}\label{eq:measure_ep}
    \frac{\partial u_\varepsilon}{\partial t}=\operatorname{div}((1-g)\mathbf{z}_\varepsilon)+\lambda \frac{C_{J, 1}}{\varepsilon^{1+N}} \int_{\Omega} \frac{g(x)+g(y)}{2} J\left(\frac{x-y}{\varepsilon}\right) h_\varepsilon(x, y,t) d y, \quad \text{ in } \mathcal{D}^{\prime}\left(Q_T\right).
\end{equation}
For any  $\varphi \in C\left([0, T] ; L^2(\Omega)\right) \cap L^{\infty}(0, T ; B V(\Omega))$, using the nonlocal integrate by parts formula, we have

\begin{equation}\label{eq:weak_ep}
    \begin{aligned}
    &\lambda \frac{C_{J, 1}}{2 \varepsilon^{1+N}}  \int_0^{T} \int_{\Omega} \int_{\Omega} g(x) J\left(\frac{x-y}{\varepsilon}\right)  h_{\varepsilon}(x, y,t)(\varphi(y,t)-\varphi(x,t)) d yd x dt \\
    &+\int_0^{T} \int_{\Omega} \frac{\partial u_\varepsilon}{\partial t} \varphi dxdt+\int_0^{T} \int_{\Omega} \left((1-g )\mathbf{z}_\varepsilon,D\varphi \right) dt=0.
    \end{aligned} 
\end{equation}
Set  $\varphi = u_\varepsilon$ in \eqref{eq:weak_ep} and change variables to obtain 
    \begin{equation*}
    \begin{aligned}
     &  \int_0^{T} \int_{\mathbb{R}^N} \int_{\Omega}\frac{C_{J, 1}}{2} J(z) \chi_{\Omega}\left(x+\varepsilon z\right)\left|\frac{u_{\varepsilon}\left(x+\varepsilon z,t\right)-u_{\varepsilon}(x,t)}{\varepsilon}\right| d x d z dt\\ 
     &+  \int_0^{T} \int_{\Omega} \left(\mathbf{z}_\varepsilon,D u_{\varepsilon} \right) dt +\sup _{0<t < T} \int_{\Omega}\left|u_{\varepsilon}\right|^2 d x \leqslant M.
    \end{aligned}
\end{equation*}

Therefore, by Lemma \ref{lm:lm_ep}, there exist a subsequence $\{\varepsilon_n\} \rightarrow 0$ and $ u \in L^\infty(0,T; B V(\Omega))$,  such that 
\begin{gather*}
    u_{\varepsilon_n} {\rightharpoonup} u, \quad \text { in }  L^2(Q_T),\\
    u_{\varepsilon_n} \rightarrow u, \quad \text { in } L^1(Q_T),
\end{gather*}
and 
\begin{equation}
    \frac{C_{J, 1}}{2} J(z) \chi_{\Omega}\left(x+\varepsilon_n z\right) \frac{u_{\varepsilon_n}\left(x+\varepsilon_n z,t\right)-u_{\varepsilon_n}(x,t)}{\varepsilon_n} \rightharpoonup \frac{C_{J, 1}}{2} J(z) z \cdot D u, 
\end{equation}
weakly as measures. 
Moreover, since the boundedness of $\{\mathbf{z}_{\varepsilon_n}\}$ and $\{h_{\varepsilon_n}\}$, we can also assume that 
\begin{gather}
J(z) \chi_{\Omega}\left(x+\varepsilon_n z\right) h_{\varepsilon_n}\left(x, x+\varepsilon_n z,t\right) \stackrel{*}{\rightharpoonup} \Lambda(x, z,t), \quad \text { in } L^{\infty}\left( \Omega \times \mathbb{R}^N \times (0, T) \right), \label{eq:Jh_ep_weak}\\
\mathbf{z}_{\varepsilon_n}   \stackrel{*}{\rightharpoonup} \mathbf{z} , \quad \text { in } L^\infty\left(Q_T, \mathbb{R}^N\right),  \label{eq:z_ep_weak}
\end{gather}
with $|\Lambda(x, z,t)| \leqslant J(z)$. 

Now, taking $\varepsilon = \varepsilon_n$ in \eqref{eq:weak_ep}, restricting $\varphi$ in $C([0,T], L^2(\Omega) \cap W^{1,1}(\Omega) )$ and changing variables, we have 
\begin{equation}\label{eq:change_var}
    \begin{aligned}
    &\frac{C_{J, 1}}{2} \int_0^{T} \int_{\mathbb{R}^N} \int_{\Omega} g(x) J(z) \chi_{\Omega}\left(x+\varepsilon_n z\right) h_{\varepsilon_n}\left(x, x+\varepsilon_n z,t\right) d z \frac{\varphi\left(x+\varepsilon_n z,t\right)-\varphi(x,t)}{\varepsilon_n} d xdt \\
    &+\int_0^{T} \int_{\Omega} \frac{\partial u_{\varepsilon_n}}{\partial t} \varphi dxdt+\int_0^{T} \int_{\Omega} (1-g )\mathbf{z}_{\varepsilon_n}\cdot \nabla \varphi dx dt =0. 
    \end{aligned} 
\end{equation}
Passing to the limit in \eqref{eq:change_var}, we have 
\begin{equation*}
    \frac{C_{J, 1}}{2} \int_0^{T}  \int_{\mathbb{R}^N } \int_{\Omega} g(x) \Lambda(x, z,t) z \cdot \nabla \varphi d x d z dt + \int_0^{T} \int_{\Omega} \frac{\partial u}{\partial t} \varphi dxdt+\int_0^{T} \int_{\Omega} (1-g )\mathbf{z} \cdot \nabla \varphi  dxdt=0. 
\end{equation*}
Set $\xi=\left(\xi_1, \ldots, \xi_N\right)$, the vector field defined by
\begin{equation*}
    \xi_i  =\frac{C_{J, 1}}{2} g \int_{\mathbb{R}^N} \Lambda(x, z,t) z_i d z + (1-g)\mathbf{z}_i, \quad i=1, \ldots, N .
\end{equation*}
Then $\xi \in L^{\infty}\left(Q_T; \mathbb{R}^N\right)$, and 
$$
\frac{\partial u}{\partial t} = 
\operatorname{div}(\xi), \quad \text { in } \mathcal{D}^{\prime}(Q_T).
$$

Let us see that $\|\xi\|_{\infty} \leqslant 1$. Given $\zeta \in \mathbb{R}^N \backslash\{0\}$, let $R_{\zeta}$ be the rotation such that $\zeta=R_{\zeta}(\mathbf{e}_1|\zeta|)$. After changing variables $z=R_{\zeta}(y)$, we obtain
\begin{equation*}
    \begin{aligned}
\xi  \cdot \zeta &=\frac{C_{J, 1}}{2} g  \int_{\mathbb{R}^N} \Lambda(x, z,t) z \cdot \zeta d z + (1-g )\mathbf{z} \cdot \zeta \\
&=\frac{C_{J, 1}}{2} g  \int_{\mathbb{R}^N} \Lambda\left(x, R_{\zeta}(y),t\right) R_{\zeta}(y) \cdot R_{\zeta}(\mathbf{e}_1|\zeta|) d y  + (1-g )\mathbf{z} \cdot \zeta \\
&=\frac{C_{J, 1}}{2} g  \int_{\mathbb{R}^N} \Lambda\left(x, R_{\zeta}(y),t\right) (y \cdot \mathbf{e}_1) |\zeta| d y  + (1-g )\mathbf{z} \cdot \zeta .
    \end{aligned}
\end{equation*}
Since $J$ is a radial function with $\Lambda(x, z,t) \leqslant J(z)$  and 
$$
C_{J, 1}^{-1}=\frac{1}{2} \int_{\mathbb{R}^N} J(z)\left|z_1\right| d z, 
$$
we obtain 
$$
|\xi  \cdot \zeta| \leqslant \frac{C_{J, 1}}{2} g \int_{\mathbb{R}^N} J(y)\left|y_1\right| d y|\zeta| + (1-g ) |\zeta|, \quad \text { a.e. } (x,t) \in Q_T, 
$$
which implies $\|\xi\|_{\infty} \leqslant 1$.

To finish the proof, we only need to show 
\begin{equation}\label{eq:TV_entropy}
    \int_{\Omega}(u-w) \frac{\partial u}{\partial t} d x 
    \leqslant \int_{\Omega} \xi \cdot \nabla w  dx
    -\int_{\Omega} |Du|, \quad  \text{ a.e. on } [0, T] ,
\end{equation}
for any $w \in L^\infty\left(0, T; W^{1,1}(\Omega) \cap L^2(\Omega) \right)$. 
Taking $\varphi = (w - u_{\varepsilon_n})\eta(t)$  with $\eta(t)\in C[0,T]$, $ \eta(t)\geqslant 0$ in  \eqref{eq:change_var}, and having in mind the equivalent form (a) in Prop. \ref{prop:equivalent}, we have 
\begin{equation*}
    \begin{aligned}
    &\int_0^T \int_{\Omega} \frac{\partial u_{\varepsilon_n}}{\partial t}  (u_{\varepsilon_n}-w )\eta(t)dxdt \\
    &= \frac{C_{J, 1}}{2} \int_0^T  \int_{\mathbb{R}^N} \int_{\Omega} g(x) J(z) \chi_{\Omega}\left(x+\varepsilon_n z\right) h_{\varepsilon_n} \times \\
    &\quad\quad\quad\quad\quad\quad\quad\quad\quad\quad\quad\quad\quad\quad\left(x, x+\varepsilon_n z, t \right) d z \frac{w\left(x+\varepsilon_n z,t\right)-w(x,t)}{\varepsilon_n} \eta(t) d x dt\\
    & \quad  -\frac{C_{J, 1}}{2} \int_0^T  \int_{\mathbb{R}^N} \int_{\Omega} J(z) \chi_{\Omega}\left(x+\varepsilon_n z\right)\left|\frac{u_{\varepsilon_n}\left(x+\varepsilon_n z, t\right)-u_{\varepsilon_n}(x,t)}{\varepsilon_n}\right| \eta(t) dz d xdt \\
    &\quad + \int_0^T  \int_{\Omega} (1-g )\mathbf{z}_{\varepsilon_n} \cdot \nabla w  \eta(t) dx dt-  \int_0^T  \int_{\Omega} \left((1-g )\mathbf{z}_{\varepsilon_n},D u_{\varepsilon_n} \right)\eta(t) dt.
    \end{aligned} 
\end{equation*}
Combining \eqref{eq:Jh_ep_weak}, \eqref{eq:z_ep_weak}, lower semicontinuity in Lemma \ref{weak_continuous_u}, and passing to the limit, we have 
\begin{equation*}
    \begin{aligned}
    &\int_0^T\int_{\Omega} \frac{\partial u}{\partial t} (u - w) \eta(t)dxdt \\
    & \leqslant  \frac{C_{J, 1}}{2}  \int_0^T\int_{\Omega}\int_{\mathbb{R}^N } g(x) \Lambda(x, z,t) z \cdot \nabla w(x,t)\eta(t) d z d x dt + \int_0^T \int_{\Omega} (1-g )\mathbf{z} \cdot \nabla w dx\eta(t) dt \\
    &\quad  -\frac{C_{J, 1}}{2}  \int_0^T \int_{\Omega}\int_{\mathbb{R}^N } g(x)|J(z) z \cdot D u| \eta(t) d z d x dt - \int_0^T \int_{\Omega} (1-g ) |Du| \eta(t) dt\\
    &=\int_0^T \int_{\Omega} \xi \cdot \nabla w \eta(t) d x dt - \int_0^T \int_{\Omega} (1-g ) |Du| \eta(t) dt\\
    &\quad -\frac{C_{J, 1}}{2} \int_0^T \int_{\Omega} \int_{\mathbb{R}^N}g(x) |J(z) z \cdot D u| \eta(t) d z d x dt.
    \end{aligned}
\end{equation*}
Now, for every $x \in \Omega$ such that the Radon-Nikodym derivative $\frac{D u}{|D u|}(x) \neq 0$, we have
$$
\begin{aligned}
&\frac{C_{J, 1}}{2} \int_0^T  \int_{\Omega} \int_{\mathbb{R}^N} g(x) |J(z) z \cdot D u| \eta(t) dzdx dt\\
& =\frac{C_{J, 1}}{2} \int_0^T  \int_{\Omega} \int_{\mathbb{R}^N} g(x) \eta(t) J(z)\left|z \cdot \frac{D u}{|D u|}(x)\right| d z d|D u|(x)  dt.
\end{aligned}
$$
Let $R_x$ be the rotation such that $R_x\left(\frac{D u}{|D u|}(x)\right)=\mathbf{e}_1\left|\frac{D u}{|D u|}(x)\right|$. Then, since $J$ is a radial function and $\left|\frac{D u}{|D u|}(x)\right|=1$, $|D u|$-a.e. in $\Omega$, if we make the change of variables $y=R_x(z)$, we have
$$
\begin{aligned}
&\frac{C_{J, 1}}{2} \int_0^T  \int_{\Omega} \int_{\mathbb{R}^N} g(x) |J(z) z \cdot D u| \eta(t) dzdx dt\\
& =\frac{C_{J, 1}}{2} \int_0^T  \int_{\Omega} \int_{\mathbb{R}^N} g(x) \eta(t) J(z)\left|R_x(z) \cdot R_x \left(\frac{D u}{|D u|}(x)\right)\right| d z d|D u|(x)  dt\\
& =\frac{C_{J, 1}}{2} \int_0^T  \int_{\Omega} \int_{\mathbb{R}^N} g(x) \eta(t) J(y)\left|y_1\right| d y d|D u|(x)  dt\\
&= \int_0^T  \int_{\Omega} \eta(t)g(x)|D u|  dt.
\end{aligned}
$$
Consequently, because of the arbitrariness of $\eta(t)$, \eqref{eq:TV_entropy} holds.

\end{proof}

\section{Experiments}\label{sec:experi}
In this section, finite difference method is used to discretize problem \eqref{eq:tv_case}. The AA model \cite{aubert2008variational}, F1P-AA model \cite{Tianling2022DCDS} and LNL-2 model \cite{shi2021coupling} are selected for comparison.

\subsection{Numerical scheme}
Equation \eqref{eq:tv_case} comprises the local and nonlocal TV components.
For these two distinct components, the midpoint discretization scheme is applied to the local TV term, while the nonlocal operators in \cite{lou2010image} are employed to discretize the nonlocal component.

Denoting $u_{i, j}^{(n)}$ as the value of $u$ at pixel $(i, j)$ ($1\leqslant i \leqslant I$, $1\leqslant j \leqslant J $) on the $n$-th iteration, and adding $1-\lambda$ to the local part for easier adjustment, then \eqref{eq:tv_case} can be discretized as 
\begin{equation}\label{eq:dis_ourmodel}
    \begin{aligned}
    \frac{u_{i, j}^{(n+1)}-u_{i, j}^{(n)}}{\tau}=&(1-\lambda)\left(C_{i+\frac{1}{2}, j}^{(n)}\left(u_{i+1, j}^{(n)}-u_{i, j}^{(n)}\right)+C_{i-\frac{1}{2}, j}^{(n)}\left(u_{i-1, j}^{(n)}-u_{i, j}^{(n)}\right)\right. \\
    &\left.+C_{i, j+\frac{1}{2}}^{(n)}\left(u_{i, j+1}^{(n)}-u_{i, j}^{(n)}\right) +C_{i, j-\frac{1}{2}}^{(n)}\left(u_{i, j-1}^{(n)}-u_{i, j}^{(n)}\right)\right)\\
    &+\lambda g_{i, j} \sum_{(s, t) \in \mathcal{N}_{i, j}}  \alpha_{i, j}^{s, t}\left(u_{s, t}^{(n)}-u_{i, j}^{(n)}\right),
    \end{aligned}
    \end{equation}
where 
\begin{equation*}
    \begin{aligned}
    C_{i+\frac{1}{2}, j} =&\left(1-\frac{g_{i+1, j}+g_{i, j}}{2}\right) \times \\
    &\left(\left(u_{i+1, j}^{(n)}-u_{i, j}^{(n)}\right)^2+ \left(\frac{u_{i+1, j+1}^{(n)}+u_{i, j+1}^{(n)}-u_{i+1, j-1}^{(n)}-u_{i, j-1}^{(n)}}{4}\right)^2+\epsilon\right)^{-\frac{1}{2}} .
    \end{aligned}
    \end{equation*}
    A similar approach is employed to define other local coefficients at midpoints.	
    $\epsilon$ is a small positive constant to prevent division by zero in the denominator.	

For the nonlocal component, we first denote $\mathcal{N}_{i, j}$ as the set of neighbors of point $(i, j)$, $h$ as a positive constant that quantifies the noise level, and ${\left\|f\left(\mathcal{N}_{i, j}\right)-f\left(\mathcal{N}_{s, t}\right)\right\|_{2, a}^2}$ represents the Gaussian-weighted Euclidean difference.	
Following the numerical scheme of nonlocal TV proposed in \cite{lou2010image}, the nonlocal coefficient $\alpha_{i, j}^{s, t}$ is computed as	
\begin{equation*}
    \alpha_{i, j}^{s, t} = w_{i, j}^{s, t} \left|  \left(\sum_{k,m \in \mathcal{N}_{i, j}}  w_{i, j}^{k,m} (u_{i,j}-u_{k,m})^2 \right)^{-\frac{1}{2}} +  \left(\sum_{k,m \in \mathcal{N}_{s, t}}  w_{s, t}^{k,m} (u_{s,t}-u_{k,m})^2 \right)^{-\frac{1}{2}}\right| , 
\end{equation*}
where $w_{i, j}^{s, t}$ is designed to measure the similarity between the two patches $\mathcal{N}_{i, j}$ and $\mathcal{N}_{s, t}$:
\begin{equation}\label{eq:w_choice}
    w_{i, j}^{s, t}=\frac{1}{Z_{i , j}} e^{-\frac{\left\|f\left(\mathcal{N}_{i, j}\right)-f\left(\mathcal{N}_{s, t}\right)\right\|_{2, a}^2}{h^2}},
\end{equation}
and $Z_{i, j}$ is a constant such that $\sum_{(s, t) \in \mathcal{N}_{i, j}} w_{i, j}^{s, t}=1$. 

Given that multiplicative noise exhibits spatial correlation, an appropriate grayscale indicator is essential for enhancing the denoising performance.	
Since higher grayscale values are more susceptible to degradation in an image, we choose 	
\begin{equation}\label{eq:G_choice}
    \widetilde{f}_{i,j}=\left(\frac{\left(f_\sigma\right)_{i,j}}{\max_{1\leqslant i \leqslant I , 1\leqslant j \leqslant J}\left(f_\sigma\right)_{i,j}}\right)^\beta,
\end{equation}
where $\beta$ is a nonnegative constant, and $f_\sigma$ denotes the convolution of $f$ with the Gaussian kernel $G_\sigma$, which has a standard deviation of $\sigma$.	
A higher grayscale value corresponds to a larger $\widetilde{f}_{i,j}$, thereby increasing the diffusion velocity.	
Then, the grayscale indicator $g_{i,j}$ is set to 
\begin{equation*}
g_{i,j}= \begin{cases*} \widetilde{f}_{i,j}, \quad \text{if } \lambda \geqslant 0.5,\\
    1- \widetilde{f}_{i,j}, \quad \text{if } \lambda < 0.5.
\end{cases*}
\end{equation*}
As $\lambda$ varies, the relative dominance of nonlocal and local TV terms also changes. 
When $\lambda \geqslant 0.5$, the nonlocal term dominates the diffusion process; thus, multiplying $\widetilde{f}_{i,j}$ by the nonlocal term is more appropriate, and vice versa.

The Neumann boundary condition can be efficiently implemented by
\begin{equation*}
    u_{i, 0}^n=u_{i, 1}^n, \quad u_{0, j}^n=u_{1, j}^n, \quad
    u_{I, j}^n=u_{I+1, j}^n, \quad u_{i, J}^n=u_{i, J+1}^n.
\end{equation*}

In parameter selection, most parameters remain unchanged, while some vary depending on the experimental setup.	
When computing the weight $w_{i, j}^{s, t}$, $h$ is estimated using the algorithm in \cite{immerkaer1996fast}. The search window size is set to $21 \times 21$, and the patch size to $10\times 10$, with only the 20 most similar neighbors of each pixel are considered for weighting. 	
The fixed parameters $\tau$, $\epsilon$ are 0.2 and $10^{-5}$, respectively.

\subsection{Experimental results}
In this subsection, we present two distinct experiments.	
`Experiment 1' aims to demonstrate the significance of coupling between nonlocal TV and local TV.	
`Experiment 2' is a comparative experiment  designed to evaluate the denoising performance of our model against other multiplicative denoising models.	
We select five images as the test set, with  `Hybrid' utilized in `Experiment 1', and the remaining images used in `Experiment 2'. 

\begin{figure}[htbp]
    \centering  
    \subfigure[Hybrid]{
        \label{Hybrid}
        \includegraphics[width=0.18\textwidth]{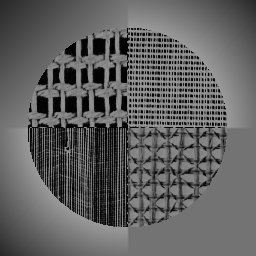}}
    \subfigure[Barbara]{
        \label{Barbara}
        \includegraphics[width=0.18\textwidth]{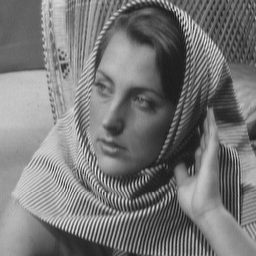}}
    \subfigure[Tower]{
        \label{Tower}
        \includegraphics[width=0.18\textwidth]{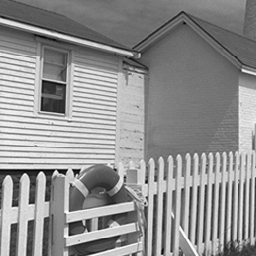}}
    \subfigure[Texture]{
        \label{Texture}
        \includegraphics[width=0.18\textwidth]{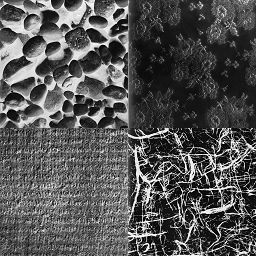}}
    \subfigure[Satelite]{
        \label{Satelite}
        \includegraphics[width=0.18\textwidth]{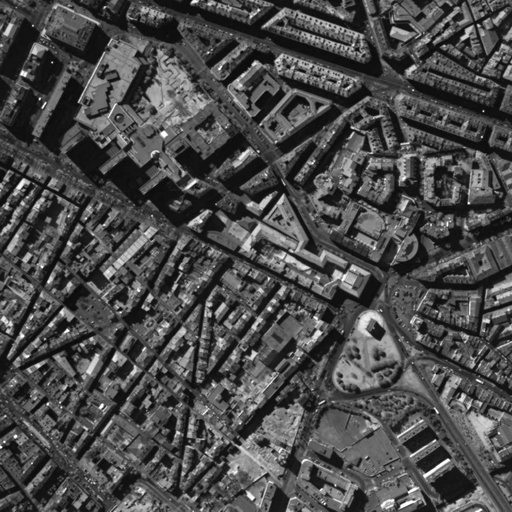}}
    \caption{Test images used in experiments.}
    \label{five_ima}
\end{figure}

\begin{table}[]
    \setlength{\tabcolsep}{10pt}
    \centering
    \caption{The comparison of PSNR and SSIM on test images.}\label{simlua}
    \begin{tabular}{cccccc}
    \hline
                        Noise Level  &   Name       & AA & F1P-AA & LNL-2 & Ours \\ \hline
    \multirow{4}{*}{$L=4$}  & Barbara   &  20.95/0.518  &   20.95/0.318      &  20.94/\textbf{0.579}     &   \textbf{21.09}/0.563   \\
                          & Tower    & 19.30/0.573   &  19.19/0.375       &   \textbf{19.99}/\textbf{0.658}    &  19.62/0.594    \\
                          & Texture  & 17.87/0.691   &  17.67/0.643       &  17.93/0.698     &   \textbf{17.98}/\textbf{0.700}   \\
                          & Satelite &  22.19/0.857  &   21.62/0.594      &  20.89/0.854     &   \textbf{22.22}/\textbf{0.859}   \\ \hline
    \multirow{4}{*}{$L=10$} & Barbara   & 22.60/0.597  &   22.38/0.427      &   \textbf{23.93}/0.679    &   23.79/\textbf{0.700}   \\
                          & Tower    & 22.05/0.703   &   21.76/0.498      &   23.18/0.760    &   \textbf{23.35}/\textbf{0.777}   \\
                          & Texture  & 20.34/0.814  &  20.18/0.782       &  20.57/0.819     &   \textbf{20.62}/\textbf{0.820}   \\
                          & Satelite & 24.21/0.923   &   23.65/0.721      &  23.17/0
                          922     &  \textbf{24.32}/\textbf{0.926}    \\ \hline
    \end{tabular}
    \end{table}

In `Experiment 1', we want to highlight the importance of coupling.	
We compare our model \eqref{eq:tv_case} with the local adaptive TV flow and the adaptive NLTV flow, which correspond to $\lambda = 1$ and $\lambda=0$ in \eqref{eq:dis_ourmodel}, respectively.	
The noisy image is generated by multiplying Gamma noise with $L=10$ onto the `Hybrid' image.	
The chosen parameters are $\sigma=3$, $\beta=1$ for adaptive NLTV, $\sigma=2$, $\beta=2$ for adaptive TV, and $\sigma=3$, $\beta=2$ for our model, respectively.	
Specifically, we let the grayscale indicator $g_{i,j}$ as 
\begin{equation*}
    g_{i,j}= \begin{cases*} (G_{\sigma_1}*\chi)_{i,j}\widetilde{f}_{i,j}, \quad \text{if } \lambda \geqslant 0.5,\\
        1- (G_{\sigma_1}*\chi)_{i,j} \widetilde{f}_{i,j}, \quad \text{if } \lambda < 0.5,
    \end{cases*}
\end{equation*}
where $\chi(x) = 0.999$ for texture regions and $\chi(x) = 0.001$ for background regions. Here, $G_{\sigma_1}*\chi$ represents the convolution of $\chi$ with a Gaussian kernel $G_{\sigma_1}$, where the standard deviation is set to $\sigma_1 = 0.5$.	
Under these conditions, the restoration results are presented in Figure \ref{Exp11}.	
As illustrated in Figure \ref{Exp11}, the adaptive TV model achieves superior denoising performance in homogeneous regions compared to the nonlocal adaptive TV model.	
The nonlocal adaptive TV model leaves residual noise in homogeneous regions but effectively preserves textures.	
In contrast, the coupling model \eqref{eq:tv_case} mitigates the drawbacks of both approaches, achieving higher PSNR and SSIM values.

\begin{figure}[htbp]
    \centering
    \begin{tabular}{@{}c@{~}c@{~}c@{~}c@{}}
        \includegraphics[width=0.23\textwidth]{fig/hybrid.png} &
        \includegraphics[width=0.23\textwidth]{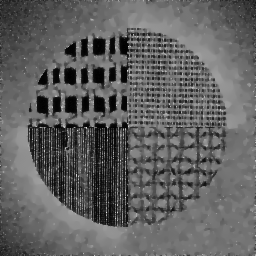} &
        \includegraphics[width=0.23\textwidth]{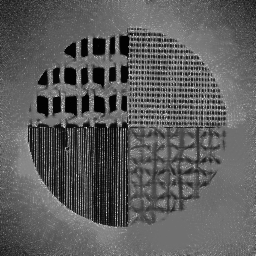}&
        \includegraphics[width=0.23\textwidth]{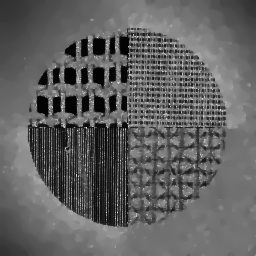} \\

        \footnotesize{Clean Image} & 
        \footnotesize{PSNR:24.22/SSIM:0.81}  & 
        \footnotesize{PSNR:23.82/SSIM:0.73}  & 
        \footnotesize{PSNR:24.81/SSIM:0.88} \\

        \includegraphics[width=0.23\textwidth]{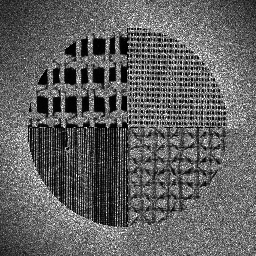} &
        \includegraphics[width=0.23\textwidth]{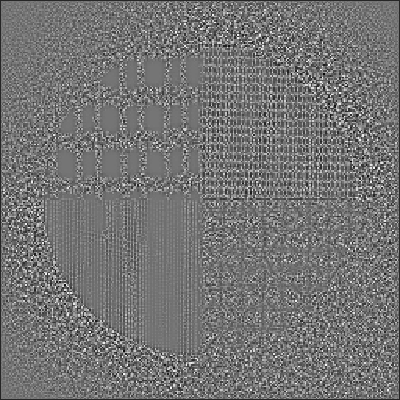}&
        \includegraphics[width=0.23\textwidth]{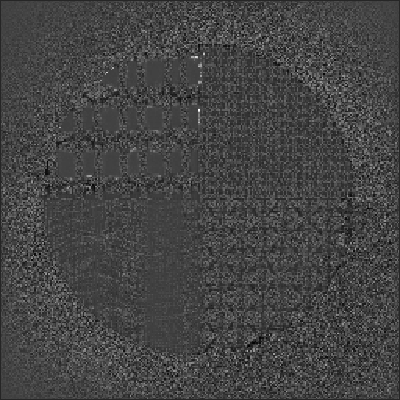} &
        \includegraphics[width=0.23\textwidth]{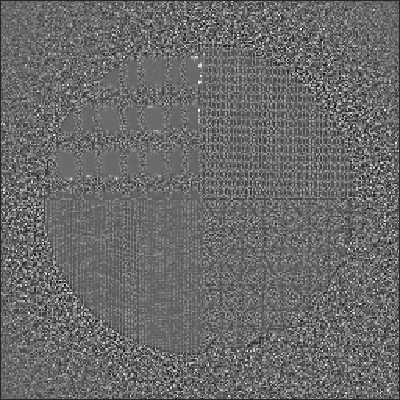}  \\

        \footnotesize{Noisy Image} & 
        \footnotesize{Adaptive TV ($\lambda=0$)}  & 
        \footnotesize{Adaptive NLTV ($\lambda=1$)}  & 
        \footnotesize{Ours ($\lambda=0.3$)} \\

      \end{tabular}
      \caption{The restoration results $u$, difference images ($f - u$), PSNR values, and SSIM values for a synthesis
      image under different $\lambda$ in Eq.\eqref{eq:tv_case}.  }
      \label{Exp11}
\end{figure}

\begin{table}[]
    \centering
    \caption{The choice of parameters in `Experiment 2'.}\label{para_choice}
    \begin{tabular}{cccccc}
    \hline
                            Noise Level&  Parameters           & Barbara     & Tower     & Texture   & Satalite \\ \hline
    \multirow{3}{*}{$L=4$}  & $\lambda$   &   0.8          &   0.8        &  0.9         & 0.7         \\
                            & $\sigma$    &    3         &     3      &     3      &    5      \\
                            & $\beta$     &    3         &     2      &      2     &    3      \\ \hline
    \multirow{3}{*}{$L=10$} & $\lambda$   &  0.9           &  0.8         & 0.9           &  0.9        \\
                            & $\sigma$    &   2          &   3        &  3         &    5      \\
                            & $\beta$     &   2          &    2       &   2        &    3      \\ \hline
    \end{tabular}
    \end{table}

In `Experiment 2', the noisy images are generated by multiplying the original images with Gamma noise at noise levels $L=4$ and $L=10$.	
The other three  models: AA model \cite{aubert2008variational}, F1P-AA model \cite{Tianling2022DCDS} and LNL-2 model \cite{shi2021coupling} are selected for comparison. 
Among these, the AA model employs local TV regularization, while the F1P-AA model is a fractional nonlocal TV approach derived from the AA model.	
The LNL-2 model is a local and nonlocal coupled approach, corresponding to Equation (5) in \cite{shi2021coupling}.
Table \ref{simlua} presents the PSNR and SSIM values for different noise levels $L$, while Table \ref{para_choice} outlines the parameter choices for our model, corresponding to the results in Table \ref{simlua}.	
The restoration results for noise levels $L=10$ and $L=4$ are depicted in Figures \ref{test-L-10} and \ref{test-L-4}, respectively.	

Figures \ref{test-L-10} and \ref{test-L-4} demonstrate that the LNL-2 model and our model preserve texture more effectively than the AA and F1P-AA models, confirming the effectiveness of coupling methods. 
The LNL-2 model exhibits superior visual quality under higher noise levels. However, our model  removes background noise more effectively.		

As shown in Table \ref{simlua}, our model achieves the highest PSNR and SSIM in most cases when $L=10$.	
For $L=4$, the LNL-2 model demonstrates strong denoising performance on the `Tower' image.	
Furthermore, our model exhibits performance comparable to the LNL-2 and AA models on other images, with slightly superior results compared to the remaining models.

\begin{figure}[htbp]
    \centering
    \begin{tabular}{@{}c@{~}c@{~}c@{~}c@{~}c@{}}
        \includegraphics[width=0.19\textwidth]{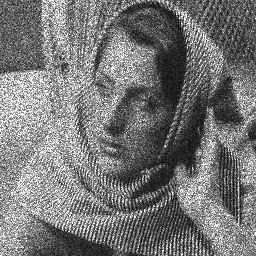} &
        \includegraphics[width=0.19\textwidth]{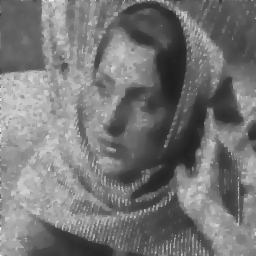} &
        \includegraphics[width=0.19\textwidth]{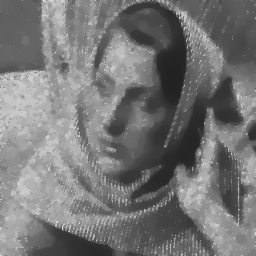}&
        \includegraphics[width=0.19\textwidth]{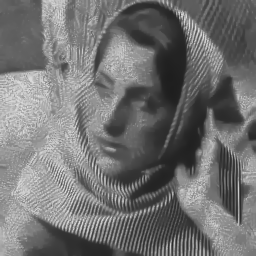} &
        \includegraphics[width=0.19\textwidth]{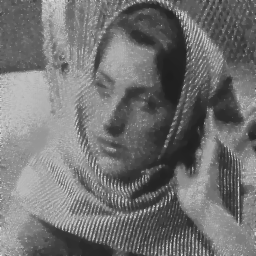}\\

        \includegraphics[width=0.19\textwidth]{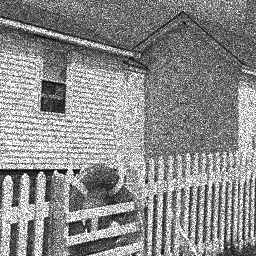}&
        \includegraphics[width=0.19\textwidth]{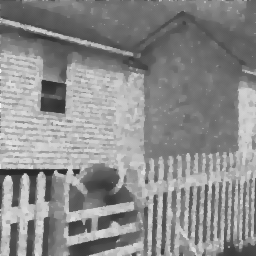} &
        \includegraphics[width=0.19\textwidth]{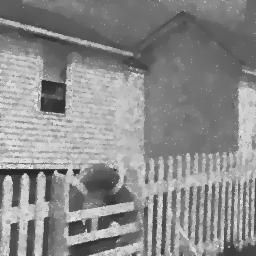}&
        \includegraphics[width=0.19\textwidth]{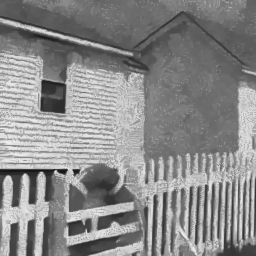} &
        \includegraphics[width=0.19\textwidth]{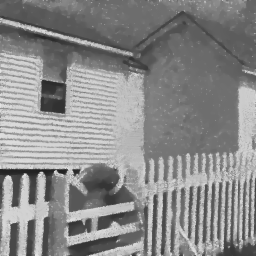}  \\

        \includegraphics[width=0.19\textwidth]{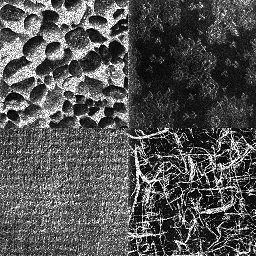}&
        \includegraphics[width=0.19\textwidth]{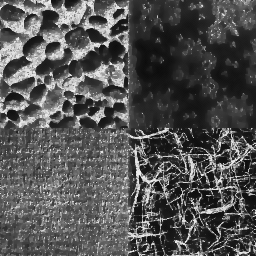} &
        \includegraphics[width=0.19\textwidth]{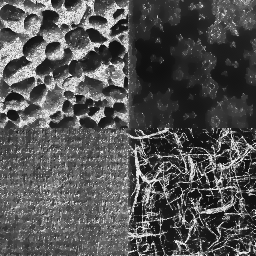}&
        \includegraphics[width=0.19\textwidth]{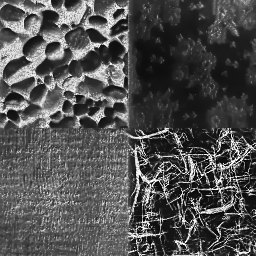} &
        \includegraphics[width=0.19\textwidth]{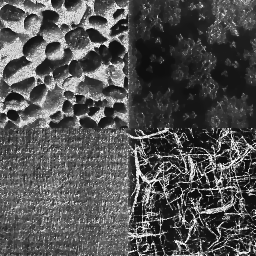}  \\

        \includegraphics[width=0.19\textwidth]{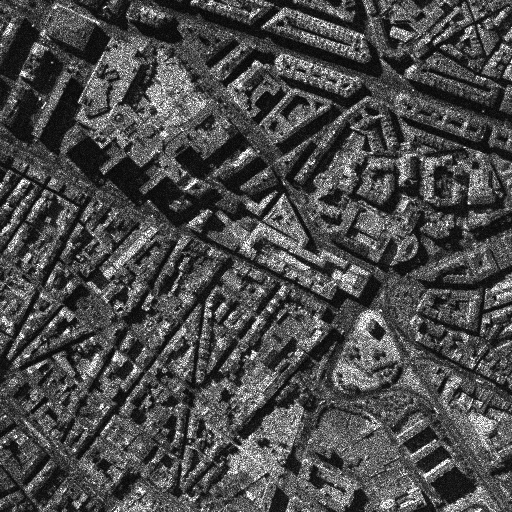}&
        \includegraphics[width=0.19\textwidth]{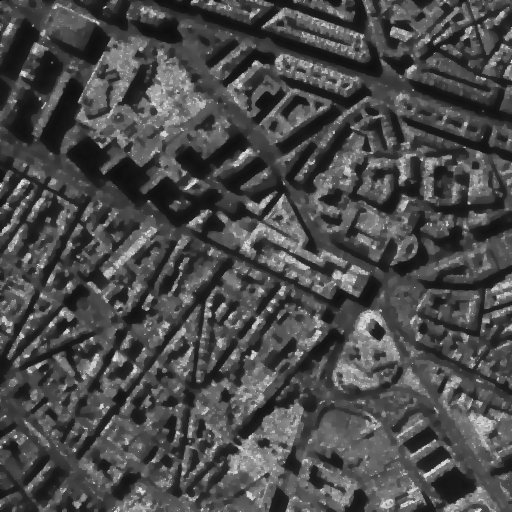} &
        \includegraphics[width=0.19\textwidth]{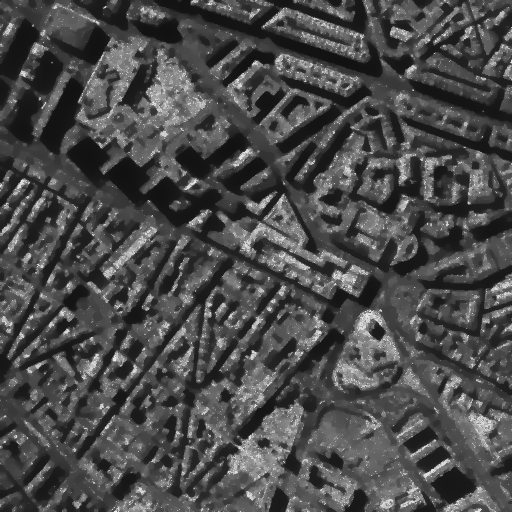}&
        \includegraphics[width=0.19\textwidth]{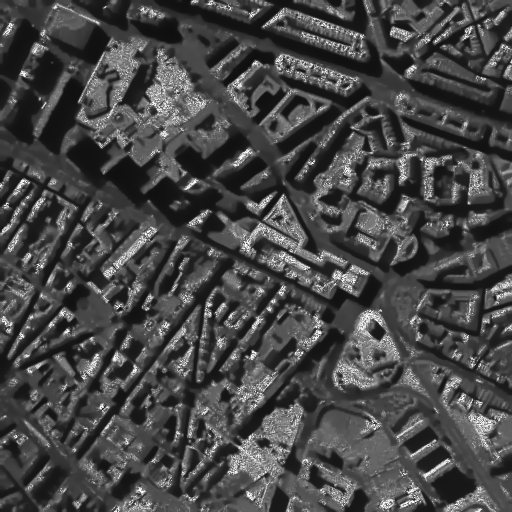} &
        \includegraphics[width=0.19\textwidth]{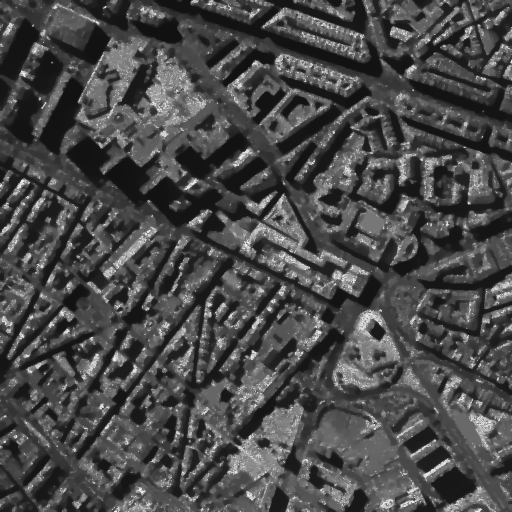}  \\

        \footnotesize{Noisy Image} & 
        \footnotesize{AA model}  & 
        \footnotesize{F1P-AA model}  & 
        \footnotesize{LNL-2 model}  & 
        \footnotesize{Ours} \\

      \end{tabular}
      \caption{Restoration results of four methods for test images with noise level $L=10$.}
      \label{test-L-10}
\end{figure}

\begin{figure}[htbp]
    \centering
    \begin{tabular}{@{}c@{~}c@{~}c@{~}c@{~}c@{}}
        \includegraphics[width=0.19\textwidth]{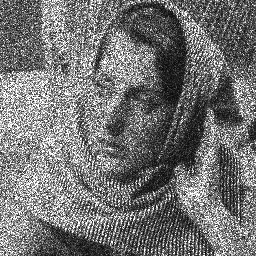} &
        \includegraphics[width=0.19\textwidth]{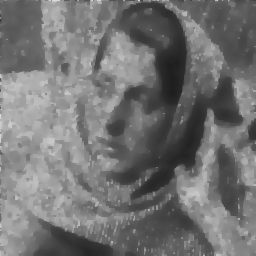} &
        \includegraphics[width=0.19\textwidth]{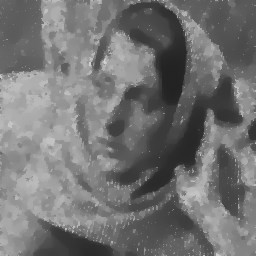}&
        \includegraphics[width=0.19\textwidth]{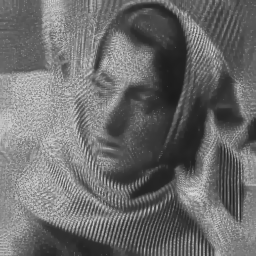} &
        \includegraphics[width=0.19\textwidth]{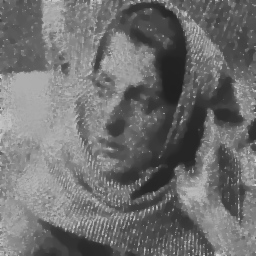}\\

        \includegraphics[width=0.19\textwidth]{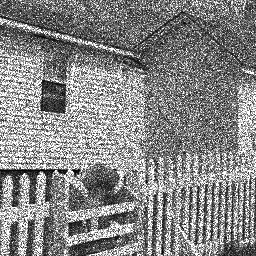}&
        \includegraphics[width=0.19\textwidth]{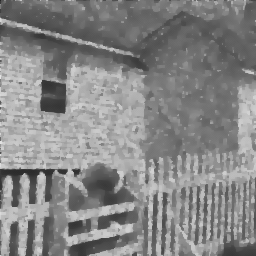} &
        \includegraphics[width=0.19\textwidth]{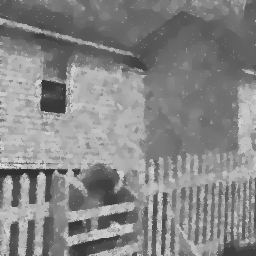}&
        \includegraphics[width=0.19\textwidth]{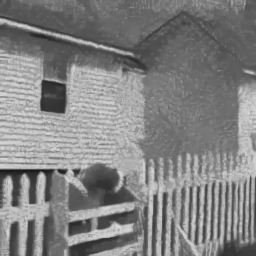} &
        \includegraphics[width=0.19\textwidth]{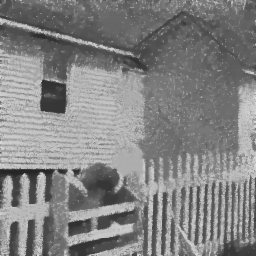}  \\

        \includegraphics[width=0.19\textwidth]{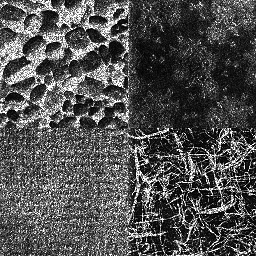}&
        \includegraphics[width=0.19\textwidth]{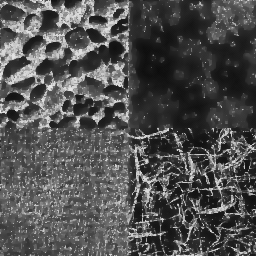} &
        \includegraphics[width=0.19\textwidth]{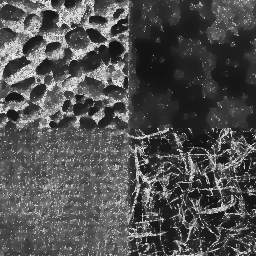}&
        \includegraphics[width=0.19\textwidth]{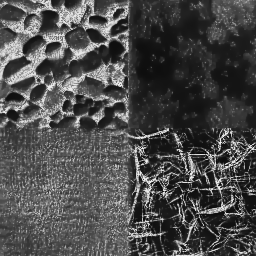} &
        \includegraphics[width=0.19\textwidth]{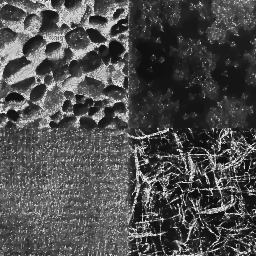}  \\

        \includegraphics[width=0.19\textwidth]{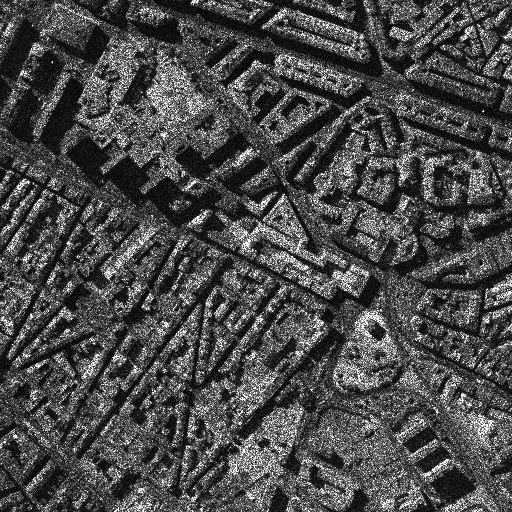}&
        \includegraphics[width=0.19\textwidth]{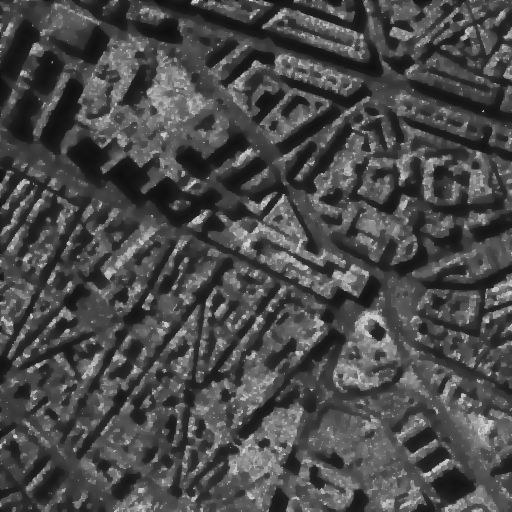} &
        \includegraphics[width=0.19\textwidth]{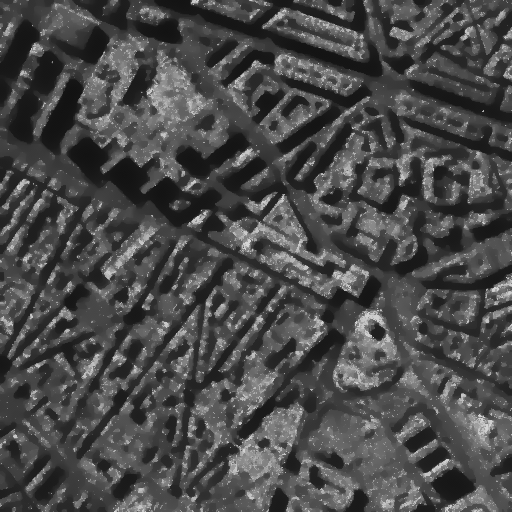}&
        \includegraphics[width=0.19\textwidth]{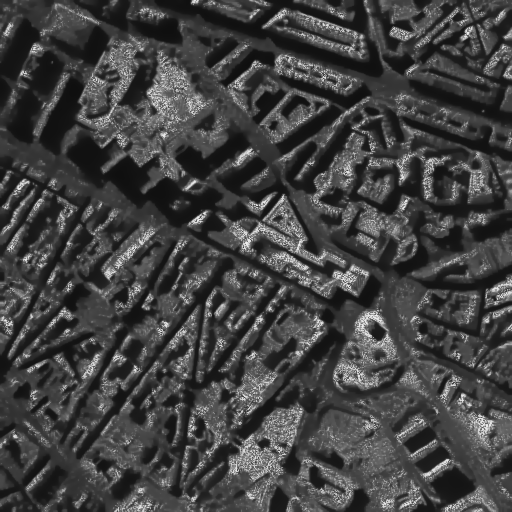} &
        \includegraphics[width=0.19\textwidth]{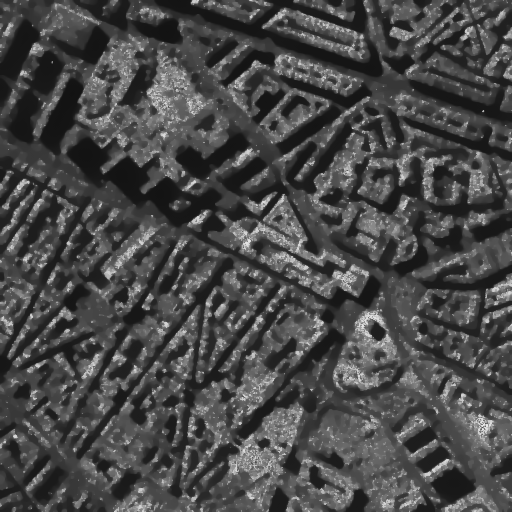}  \\

        \footnotesize{Noisy Image} & 
        \footnotesize{AA model}  & 
        \footnotesize{F1P-AA model}  & 
        \footnotesize{LNL-2 model}  & 
        \footnotesize{Ours} \\

      \end{tabular}
      \caption{Restoration results of four methods for test images with noise level $L=4$. }
      \label{test-L-4}
\end{figure}

\section{Conclusion}
To address the limitations of the local TV model, which fails to preserve texture, and the nonlocal TV model, which exhibits weak regularization, we propose a local-nonlocal coupled total variation flow.	
We analyze the existence, uniqueness, and equivalent formulations of the weak solution to our model when the initial noise image $f \in BV(\Omega) \cap L^2(\Omega)$. 
The maximum principle and asymptotic behavior are established under additional assumptions.	
Notably, the weak solutions of our model converge to the TV model under rescaling.	
Numerical experiments are conducted to demonstrate the significance of coupling and to illustrate the better performance of the proposed model compared to three other models.


\section*{Acknowledgments}

\section*{Declaration of competing interest }
The authors declare that they have no known competing financial interests or
personal relationships that could have appeared to influence the work reported in this paper.

\section*{Data available}
Data will be made available on request.

\bibliography{Coupl_tv_nltv}


\end{document}